\newtheoremstyle{mythm}%
  {\topskip}
  {\topskip}
  {\itshape}
  {}
  {\bfseries}
  {\\}
  {\parindent}
  {\thmname{#1}\thmnumber{ #2}\thmnote{ \textit{#3}}}
\newtheoremstyle{mydef}%
  {\topskip}
  {\topskip}
  {}
  {}
  {\bfseries}
  {\\}
  {\parindent}
  {\thmname{#1}\thmnumber{ #2}\thmnote{ #3}}
\theoremstyle{mythm}
\newtheorem{lem}{Lemma}[section]
\newtheorem{thm}[lem]{Theorem}
\newtheorem*{thm*}{Theorem}
\newtheorem{cor}[lem]{Corollary}
\newtheorem*{cor*}{Corollary}
\newtheorem{prop}[lem]{Proposition}
\newtheorem{hyp}[lem]{Hypothesis}
\theoremstyle{mydef}
\newtheorem{dfn}[lem]{Definition}
\newtheorem{rem}[lem]{Remark}
\newtheorem{notn}[lem]{Notation}
\newtheorem{ex}[lem]{Example}
\newcommand{\mbb}[1]{\mathbb #1}
\newcommand{\mf}[1]{\mathfrak #1}
\newcommand{\mc}[1]{\mathcal #1}
\newcommand{\oper}[1]{\operatorname{#1}}
\newcommand{\m}[1]{#1}
\newcommand{\md}[1]{#1}
\newcommand{\dra}{\dashrightarrow}
\newcommand{\op}{\oper{op}}
\newcommand{\Br}{\oper{Br}}
\newcommand{\per}{\oper{per}}
\newcommand{\ind}{\oper{ind}}
\newcommand{\lcm}{\oper{lcm}}
\renewcommand{\deg}{\oper{deg}}
\newcommand{\ram}{\oper{ram}}
\newcommand{\Gal}{\oper{Gal}}
\newcommand{\Hom}{\oper{Hom}}
\newcommand{\cha}{\oper{char}}
\newcommand{\Spec}{\oper{Spec}}
\newcommand{\GL}{\oper{GL}}
\newcommand{\Mat}{\oper{Mat}}
\newcommand{\Orth}{\oper{O}}
\newcommand{\SOrth}{\oper{SO}}
\newcommand{\SB}{\oper{SB}}
\newcommand{\cd}{\oper{cd}}
\newcommand{\td}{\oper{tr.deg.}}
\newcommand{\ur}{\oper{ur}}
\newcommand{\Q}{\mbb Q}
\newcommand{\Z}{\mbb Z}
\newcommand{\N}{\mbb N}
\newcommand{\C}{\mbb C}
\newcommand{\PP}{\mbb P}
\newcommand{\A}{\mbb A}
\newcommand{\F}{\mbb F}
\newcommand{\ov}{\overline}
\newcommand{\til}{\widetilde}
\newcommand{\wh}{\widehat}
\def\<{\left<}
\def\>{\right>}
\title[Applications of patching]{Applications of patching to
quadratic forms and central simple algebras}
\author{David Harbater}
\author{Julia Hartmann}
\author{Daniel Krashen}
\begin{document}

\thanks{The first author was supported in part by NSF Grant DMS-0500118}

\begin{abstract}
This paper provides applications of patching to quadratic forms
and central simple algebras over function fields of curves over
henselian valued fields. 
In particular, we use a patching approach to reprove and generalize a recent result 
of Parimala and Suresh on the $u$-invariant of $p$-adic function fields, $p \ne 2$.  
The strategy relies on a local-global principle for homogeneous spaces for rational 
algebraic groups, combined with local computations.
\end{abstract}

\maketitle

\section{Introduction} \label{intro}

A longstanding open problem in the theory of quadratic forms is to find a general method for evaluating the $u$-invariant of fields.  
To date, though, the $u$-invariant has been computed only in quite restricted situations.  In this paper we prove a general result 
that provides the $u$-invariant of function fields of curves for a variety of open cases, as well as implying known results in a 
unified way.  Most notably, we obtain a new proof of the recent result of Parimala and Suresh (\cite{PaSu}) on the $u$-invariant 
of nondyadic $p$-adic function fields. Our approach also yields evidence for the expected growth of the $u$-invariant, for example upon 
field extensions.

The method used here is quite different from that of \cite{PaSu} and other works 
on this topic, and is not cohomological.  The results stem from a 
local-global principle for the existence of points on certain homogeneous varieties, which yields a Hasse-Minkowski type statement 
for quadratic forms over function fields of curves.  

Our proofs rely on ideas from \textit{patching}, a method that has been 
used in the past to prove many results about Galois theory (see e.g.\
\cite{Har:MSRI}).  In \cite{HH:FP}, the first two authors extended
patching to structures over fields rather than over rings, to 
make the method more amenable to other applications.  This approach 
shows that giving an algebraic structure over certain function fields is 
equivalent to giving the structure over a suitable collection of 
overfields.  As in earlier forms of patching, a key step is to prove a matrix factorization result.  We use these ideas here, 
especially in the 
proof of our local-global principle.

In addition, we show how the same local-global principle can be used to 
obtain results about the period-index problem for central simple 
algebras. In particular, we give a new proof of a recent result of 
Lieblich (\cite{Lie:PI}) on function fields of curves over henselian 
rings.  It has been understood that there is a connection between 
results concerning $u$-invariants and the 
period-index problem for central simple algebras, and it is interesting 
to see how similar our proofs are in these two situations.

Below, we summarize the main results on quadratic forms and central simple
algebras (which can be found in Sections~\ref{quadratic} and~\ref{csa}).
\subsection{Results on quadratic forms}
We begin by recalling Kaplansky's definition of the $u$-invariant 
(some references use a modified definition due to Elman and Lam which agrees
with this for nonreal fields, see e.g.\ \cite{pfister}, p.~114).

\begin{dfn} \label{uinv}
Let $k$ be a field. The \textit{u-invariant} of $k$, denoted by $u(k)$,
is the maximal dimension of anisotropic quadratic forms over $k$ 
(or $\infty$, if
such dimensions are arbitrarily large).
\end{dfn}

The $u$-invariant and the possible values it can take for a fixed or varying field
has been a major object of study in the theory of quadratic forms. (Note that it is a positive integer if it is finite.)  There are many
open problems concerning this number; see for example, \cite{Lam}, Section~XIII.6. On 
the other hand there has been a lot of recent progress, most notably in the computation of the 
$u$-invariant of function fields of non-dyadic $p$-adic curves due to Parimala and Suresh 
(see below). 

It is generally expected that the $u$-invariant of field extensions should grow along with the cohomological dimension. In particular, for
``reasonable''  fields, one expects that finite extensions have the same
$u$-invariant, and that the $u$-invariant should double upon a finitely
generated field extension of transcendence degree one. To formalize our
discussion towards these expectations, we make the following definition:

\begin{dfn}\label{suinv}
Let $k$ be a field. 
The \textit{strong u-invariant} of $k$, denoted by $u_s(k)$, is the smallest real number $n$ such that 
\begin{enumerate}
\item[-] 
every finite field extension $E/k$ satisfies $u(E) \le n$, and
\item[-]
every finitely
generated field extension $E/k$ of transcendence degree one satisfies $u(E) \leq 2n$.
\end{enumerate}
If these $u$-invariants are arbitrarily large we say that
$u_s(k)=\infty$.
\end{dfn}

Thus $u_s(k) \le n$ if and only if every finitely generated field extension $E/k$ of transcendence degree $\ell \le 1$ satisfies 
$u(E) \le 2^\ell n$.
Since the $u$-invariant, if finite, is a positive integer, it follows that $u_s(k)$ is at least $1$ and lies in ${\frac 12} \N$. 

Concerning quadratic forms, our main result is:

\begin{thm*}[(Theorem~\ref{main})]
Let $K$ be a complete discretely valued field whose residue field $k$ has characteristic unequal to $2$. Then $u_s(K) = 2 u_s(k)$.
\end{thm*}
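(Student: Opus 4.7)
The plan is to prove both inequalities $u_s(K) \le 2u_s(k)$ and $u_s(K) \ge 2u_s(k)$ separately. Write $n = u_s(k)$; I treat the case $n$ finite, the case $n = \infty$ being handled by applying the same constructions to arbitrarily large $u$-invariants.

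\emph{Upper bound.} I must bound $u(E) \le 2n$ for every finite $E/K$ and $u(F) \le 4n$ for every finitely generated $F/K$ of transcendence degree one. For $E/K$ finite, $E$ is itself complete discretely valued over a finite residue extension $k'/k$, so $u(k') \le n$, and Springer's theorem (valid since $\cha(k) \ne 2$) gives $u(E) = 2u(k') \le 2n$. For a function field $F$ over $K$ in one variable, pick a regular projective model $\mc{X}$ of $F$ over the valuation ring $T$ of $K$ and invoke the local-global principle of Section~\ref{quadratic}: a quadratic form over $F$ is isotropic as soon as it is isotropic over each patching field associated to $\mc{X}$. Each patching field carries an iterated residue structure terminating at either a finite extension of $k$ (for patches at closed points of the special fiber) or a finitely generated extension of $k$ of transcendence degree one (for patches along components of the special fiber), so iterated application of Springer's theorem yields a $u$-invariant at most $4n$ on each patch. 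Hence a form of dimension greater than $4n$ over $F$ is isotropic on every patch and therefore isotropic over $F$, giving $u(F) \le 4n$.

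\emph{Lower bound.} Since $u$-invariants of fields are integers, the supremum defining $u_s(k) = n$ is attained either by a finite extension $k'/k$ with $u(k') = n$ or by a transcendence-degree-one extension $E/k$ with $u(E) = 2n$. In the first case, take the unramified extension $K'/K$ with residue field $k'$: this is finite over $K$, complete discretely valued with residue $k'$, and $u(K') = 2u(k') = 2n$ by Springer, so $u_s(K) \ge 2n$. In the second case, lift a smooth projective model $X/k$ of $E$ to a smooth projective $T$-curve $\mc{X}$ (possible because the deformation obstruction lies in $H^2(X, T_X) = 0$, with algebraization of the formal lift by Grothendieck's existence theorem), and set $F = K(\mc{X})$. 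The generic point of the special fiber gives a discrete valuation $v$ on $F$ with residue field $E$, whose completion $F_v$ is complete discretely valued with $u(F_v) = 2u(E) = 4n$ by Springer. Density of $F$ in $F_v$ now lets one descend an anisotropic diagonal form of dimension $4n$ over $F_v$ to an anisotropic form over $F$ of the same dimension: each entry $a \in F_v^\times$ is replaced by a close approximation $a' \in F^\times$ with $a/a' \in 1 + \mf{m}_v \subset (F_v^\times)^2$, where the last inclusion is Hensel's lemma applied to $x^2 - a/a'$ (valid since $\cha(k) \ne 2$), so $\<a\> \cong \<a'\>$ over $F_v$. The resulting form lives over $F$, is anisotropic over $F_v$, and a fortiori over $F$; so $u(F) \ge 4n$, and since $F/K$ has transcendence degree one this forces $u_s(K) \ge u(F)/2 \ge 2n$.

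\emph{Main obstacle.} The nontrivial content is in the upper bound, and rests squarely on the local-global principle for isotropy of quadratic forms — the central technical result of the paper, which is the reason patching enters. Once that principle is granted, the patch estimates are routine iterated Springer. The lower bound is less conceptual, combining Springer with a standard deformation-theoretic lift of a smooth curve and a density/square-class approximation.
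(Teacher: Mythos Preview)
Your lower bound is correct, though heavier than necessary: Lemma~\ref{uinvdvr} in the paper avoids deformation theory altogether by writing down a normal (not smooth) $T$-model whose closed fiber has function field $E$, and then applying the elementary inequality $u(\text{fraction field}) \ge 2u(\text{residue field})$ at the discrete valuation given by the generic point of that fiber. No smooth lifting, algebraization, or approximation is required.

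The upper bound, however, has a genuine gap. The patching fields $F_P$ and $F_U$ are \emph{not} iterated complete (or even henselian) discretely valued fields, so Springer's theorem does not bound their $u$-invariants. The ring $\wh R_P$ is a two-dimensional complete regular local ring, and its fraction field $F_P$ is strictly contained in any iterated Laurent series field over $\kappa(P)$; an embedding into such a field gives the inequality in the wrong direction. Likewise $\wh R_U$ is not a discrete valuation ring once $U$ contains a closed point. The paper does eventually prove $u(F_\xi) \le 4u_s(k)$ in Corollary~\ref{main_patches}, but only as a \emph{consequence} of the main theorem, by using Lemma~\ref{reduce_to_patches} to rewrite any diagonal form over $F_\xi$ as one coming from $F$; citing that bound here would be circular.

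What the paper actually does is reverse the quantifiers: fix the diagonal form $q$ first, and then choose the regular model $\wh X$ (via resolution, Lemma~\ref{resolution}) so that the singular divisor of $q$ has only normal crossings. At each closed point $P$ at most two components of that divisor then pass through, so over $\wh R_P$ the form decomposes into at most $2^2 = 4$ subforms with unit coefficients (Proposition~\ref{u-inv patches}(\ref{small_local})); since $\dim q > 4n \ge 4u(\kappa(P))$, one of these subforms has dimension exceeding $u(\kappa(P))$, and a Hensel lift (Lemma~\ref{affine_hensel}) produces an isotropic vector. Over $F_U$ one similarly shrinks $U$ until the coefficients are units or uniformizer-times-units (Proposition~\ref{u-inv patches}(\ref{big_local})). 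The local isotropy is thus a statement about the particular form $q$ on a model adapted to $q$, not a uniform $u$-invariant bound on the fields $F_\xi$ available as input.
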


More generally we show that this holds for excellent henselian discrete valuation rings (Corollary~\ref{hensel}).  
As a consequence of these results, in many cases we are able to obtain exact values of the $u$-invariant and strong $u$-invariant, 
not just upper bounds.

By definition, a $C_d$-field has $u$-invariant at most $2^d$. Using this, we deduce from our main theorem that
if $T$ is a complete (or excellent henselian) discrete valuation ring whose residue field is a $C_d$-field of odd characteristic, 
every function 
field $F$ of a regular $T$-curve satisfies $u(F)\leq 2^{d+2}$ (see Corollary~\ref{cor_mlocal}(\ref{mlocalineq}), which is more general).  
As a special case, we
obtain the recent theorem of Parimala and Suresh (\cite{PaSu}, Theorem~4.6; Corollary~\ref{cor_pasu} below): 
A function field in one variable over a 
non-dyadic $p$-adic field has $u$-invariant $8$.  
Our result also applies to function fields over the algebraic closure of ${\mathbb Q}$ in a non-dyadic $p$-adic field.

Applying induction to our main theorem we obtain that
the $u$-invariant of an $m$-local field with algebraically closed (respectively, finite) residue field of characteristic unequal 
to $2$ is $2^{m+1}$ (resp.,\ $2^{m+2}$); see Corollary~\ref{finac}.  
For example, the $u$-invariant of a one-variable function field over $\Q_p((t))$ is $16$, for $p$ odd.
As another application, let $k$ be a function field of transcendence degree $d$ over an algebraically closed field of characteristic 
unequal to $2$.  Then the $u$-invariant of the function field of a $K$-curve is at most $2^{d+m+1}$, for any $m$-local field $K$ with
 residue field~$k$; see after Corollary~\ref{cor_mlocal}.

In addition to these, we obtain similar results for other classes of
fields which naturally occur in the context of patching, described at the end of Section~\ref{quadratic}.  More specifically, suppose 
that $T$ is a complete discrete valuation ring with uniformizer $t$ and residue field $k$ of
characteristic unequal to $2$.  If $F$ is the fraction field of $T[[x]]$ or of the $t$-adic completion of $T[x]$, then 
$u(F) \le 4 u_s(k)$, with equality if 
$u(k)=u_s(k)$; e.g.\ if
$k$ is a $C_d$-field having $u$-invariant~$2^d$. In particular, if $k$ is algebraically closed, then $u(F)$ is equal to $4$; 
and $u(F)$ equals~$8$ if $k$ is finite (example cases of the latter include $k((x,y))$ and the fraction field of $\Z_p[[x]]$ 
with $p$ odd).

\subsection{Results on central simple algebras}

Given a field $k$, recall that the \textit{period} (or \textit{exponent}) of a central simple $k$-algebra $A$ is the order of 
the class of $A$ in the Brauer group of $k$; and the \textit{index} of $A$ is the degree of the division algebra $D$ that lies 
in the class of $A$ (i.e.\ such that $A$ is a matrix ring over $D$).  The period and index always have the same prime factors, 
and the period always divides the index (\cite{Pie}, Proposition~14.4(b)(ii)).  The \textit{period-index problem} asks whether 
all central simple algebras $A$ over a given field $k$ satisfy $\ind(A)\,|\,\per(A)^d$ for some fixed exponent $d$ depending only on $k$.
In analogy with the notion of the strong $u$-invariant (Definition~\ref{suinv}), we make the following definition (extending that of 
Lieblich; see \cite{Lie:PI}, Definition~1.1):

\begin{dfn}
Let $k$ be a field.  The \textit{Brauer dimension} of $k$ (\textit{away from a prime~$p$}) is defined to be $0$ if $k$ is separably 
closed (resp.\ separably closed away from $p$, i.e.\ the absolute Galois group of $k$ is a pro-$p$ group).  Otherwise, it is the smallest positive 
integer $d$ such that 
\begin{enumerate}
\item[-] 
for every finite field extension $E/k$ and every central simple $E$-algebra $A$ 
(resp.\ with $p {\not |}\per(A)$), we have $\ind(A) | \per(A)^{d-1}$; and
\item[-]
for every finitely generated field extension $E/k$ of
transcendence degree one 
and every central simple $E$-algebra $A$ 
(resp.\ with $p {\not |}\per(A)$), we have $\ind(A) | \per(A)^d$.
\end{enumerate}
If no such number $d$ exists, we say that the Brauer dimension is $\infty$.
\end{dfn}

Again, we can summarize this by saying that the Brauer dimension of $k$ is at most $d$ if for every finitely generated field extension 
$E/k$ of transcendence degree $\ell \le 1$ and
every central simple $E$-algebra $A$ 
(resp.\ with $p {\not |}\per(A)$), we have $\ind(A) | \per(A)^{d+\ell - 1}$.

As with the $u$-invariant, it is expected that this invariant should grow in
parallel to the cohomological dimension. In particular, one expects that it
should increase by one upon a finitely generated field extension of
transcendence degree one.  Early results in this direction were obtained by Saltman in \cite{Sal:DA} and~\cite{Sal:DAC}, 
including the fact that $\operatorname{ind}\mid \operatorname{per}^2$ for $p$-adic curves, along with a general mechanism to 
relate the Brauer dimension of curves over discretely valued fields to that of curves over the residue field.  (See also \cite{For}.)  Along these lines, 
in Section~\ref{csa} we give an alternative proof of a result that was recently shown by Lieblich in the case $d>0$ (\cite{Lie:PI}, Theorem~5.3):

\begin{thm*}[(Theorem~\ref{main_csa})]
Let $K$ be a complete discretely valued field whose residue field $k$ 
has characteristic $0$ (resp.\ characteristic $p > 0$).
If $k$ has Brauer dimension $d \ge 0$ (resp.\ away from $p$) then
$K$ has Brauer dimension at most $d+1$ (resp.\ away from $p$).
\end{thm*}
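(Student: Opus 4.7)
I will follow the patching approach outlined in the introduction, mirroring the quadratic forms theorem: reduce via patching on a regular model to local index bounds on each patch, and assemble them using a local-global principle for projective homogeneous varieties under rational algebraic groups. Throughout let $n = \per(A)$, assumed coprime to $p$ in the positive residue characteristic case.

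For the finite extension clause I argue directly: if $L/K$ is a finite extension, then $L$ is itself a complete discretely valued field with residue field $l$ a finite extension of $k$. The hypothesis on $k$ gives $\ind_l(\bar A) \mid n^{d-1}$ for any CSA over $l$ of period dividing $n$, and the standard tame DVR bound $\ind_L(A) \mid n \cdot \ind_l(\bar A)$ then yields $\ind_L(A) \mid n^d$, as required.

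The main step is the transcendence-degree-one case. Given $F/K$ finitely generated of trdeg one, after a finite extension of $K$ (whose residue field retains Brauer dimension $\le d$) I may take $F$ to be the function field of a regular projective $T$-curve $X$, where $T = \OO_K$. I would choose a finite set $\mathcal{P}$ of closed points of the special fibre $X_s$ containing all points at which $A$ ramifies badly, arranged so that at each $P \in \mathcal{P}$ the ramification divisor of $A$ has normal crossings in $\Spec \wh{\OO}_{X,P}$; let $\mathcal{U}$ be the connected components of $X_s \setminus \mathcal{P}$. The patching framework then produces overfields $F_U$ ($U \in \mathcal{U}$) and $F_P$ ($P \in \mathcal{P}$) together with a local-global principle for projective homogeneous varieties under rational algebraic groups. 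Applied to the generalized Severi--Brauer variety $V = \SB_{n^{d+1}}(A)$---projective homogeneous under $\oper{PGL}_{\deg A}$ and satisfying $V(M) \ne \emptyset \iff \ind(A_M) \mid n^{d+1}$---this principle reduces the theorem to bounding $\ind(A_{F_\xi}) \mid n^{d+1}$ at each patch.

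On $F_U$, which is the fraction field of a complete DVR with residue field $k(U)$ (a trdeg-one extension of $k$), the hypothesis on $k$ gives $\ind_{k(U)}(\bar A) \mid n^d$, and the tame DVR bound produces $\ind_{F_U}(A) \mid n \cdot n^d = n^{d+1}$. On $F_P$, the fraction field of a two-dimensional complete regular local ring with final residue $\kappa(P)$ finite over $k$, resolving the (at most two) transverse components of the ramification divisor costs at most a degree-$n^2$ extension, after which $A$ is unramified at $P$ with index equal to that of its double residue in $\kappa(P)$, which divides $n^{d-1}$; hence $\ind_{F_P}(A) \mid n^2 \cdot n^{d-1} = n^{d+1}$. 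The hard part will be establishing the local-global principle for $\SB_r$ in this patching context (via the paper's rational-homogeneous-space framework) and, in tandem, choosing $\mathcal{P}$ so that the ramification divisor of $A$ has normal crossings at each chosen point, legitimizing the two-step residue; once those are in place, the local index bounds and the gluing are automatic.
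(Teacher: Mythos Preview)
Your overall architecture matches the paper's: the local-global input is exactly Theorem~\ref{index hasse} (applied to generalized Severi--Brauer varieties, with the acting rational group being $\GL_1(A)$ rather than $\oper{PGL}$), and the endgame is to bound $\ind(A_{F_\xi})$ on each patch. Two of your local steps, however, do not go through as written.

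First, your description of $F_U$ is incorrect: for a nonempty open $U$ in the closed fiber, $\wh R_U$ is a two-dimensional $t$-adically complete domain, not a discrete valuation ring, so there is no ``tame DVR bound'' available and no well-defined residue algebra $\bar A$ over $k(U)$. The paper's Proposition~\ref{csa patches}(\ref{big_local_csa}) does something more delicate: shrink $U$ so that the ramification of $A$ on $\Spec\wh R_U$ is supported only on the closed fiber $(t_0)$, peel off a cyclic algebra $B=(\wh L,\wh\sigma,t_0)$ carrying that ramification, and then lift a $\kappa(U)$-point of the Severi--Brauer scheme of the unramified remainder to $\Spec\wh R_U$ via Lemma~\ref{affine_hensel}. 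The ``DVR intuition'' is right, but making it precise requires this Hensel-type lifting argument, not a residue map.

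Second, your $F_P$ bound is too weak when $d=0$. Your count gives $\ind(A_{F_P})\mid n^2\cdot n^{d-1}$, and for $d=0$ the exponent $d-1$ is negative; interpreting the residual index as $1$ still yields only $n^2$, whereas the theorem requires $n$. The paper avoids this by first reducing to prime period $q$ (via primary decomposition and induction on the exponent $r$ in $\per(A)=q^r$), adjoining $\zeta_q$, and then invoking Saltman's structure theorem for $q$-torsion Brauer classes over a two-dimensional complete regular local ring with normal-crossing ramification. That theorem writes $A_{F_P}$ as $B\otimes C$ with $C$ unramified and $B$ a product of at most two symbol algebras; crucially, when $d=0$ one of those symbols is forced to be trivial, so $\ind(B)\mid q$ rather than $q^2$, and the bound $\ind(A_{F_P})\mid q^{d+1}$ survives. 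Your coarser ``pass to a degree-$n^2$ extension'' argument cannot see this cancellation. So the missing ingredients are the reduction to prime period and the appeal to Saltman's decomposition; once those are in place, the patching step is exactly as you describe.
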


More generally, as in \cite{Lie:PI}, we show a version of this result for excellent henselian rings.  As an application of the above 
theorem, since the Brauer dimension of a finite field is $1$, it follows that the Brauer dimension of a $p$-adic field is at most $2$, 
and that of $\Q_p((t))$ is at most~$3$.  As another application, let $k$ be the function field of a curve over a separably closed field. 
Then the Brauer dimension of $k$ is $1$ by \cite{deJ}.  So
$\ind(\alpha) = \per(\alpha)$ for all $\alpha$ in the Brauer group of $k((t))$ with $\cha(k)$ not dividing the period.  Similarly, 
$\ind(\alpha)$ divides $\per(\alpha)^2$ for all $\alpha$ in the Brauer group of $k((t))(x)$ of period not divisible by $\cha(k)$.

In analogy to the results on the $u$-invariant, we also obtain statements for fields that arise from patching; see 
Corollary~\ref{main_csa_patches}.  In particular, let $T$ be a complete discrete valuation ring with uniformizer $t$ and residue field 
$k$ of
characteristic $0$ (resp.\ characteristic $p > 0$) and Brauer dimension $d$ (resp.\ away from $p$).  If $F$ is the fraction field of 
$T[[x]]$ or of the $t$-adic completion of $T[x]$, then $\ind(\alpha) | \per(\alpha)^{d+2}$
for all $\alpha \in \Br(F)$ with period not divisible by $\cha(k)$.  
Moreover $\ind(\alpha)=\per(\alpha)$ if $F$ is $k((x,t))$ or the fraction field of $k[x][[t]]$ where $k$ is separably closed, or if $F$ 
is the fraction field of $\Z_p^{\ur}[[x]]$, provided that the residue characteristic does not divide $\per(\alpha)$.

\subsection{Organization of the manuscript}
The organization of the manuscript is as follows. Section~\ref{decomp} is concerned with a 
decomposition of vectors. It is fairly technical and may be skipped upon a first reading.  Section~\ref{rational factorization} shows
 how this decomposition in vector spaces
can be used to obtain a multiplicative decomposition (i.e.\ factorization) in rational linear
algebraic groups (Theorem~\ref{general factorization}).  The main result of the section, the local-global principle for homogeneous 
spaces (Theorem~\ref{torsor_injective}), is a rather direct consequence. It is the key ingredient for proving the upper bounds in the 
later results. 
In Sections~\ref{quadratic} and~\ref{csa}, local computations combined 
with Theorem~\ref{torsor_injective} yield the main results about quadratic forms 
and central simple algebras, respectively.

\medskip

\noindent{\bf Acknowledgment.} The authors thank Karim Johannes Becher, Jean-Louis
Colliot-Th\'el\`ene, R.~Parimala, Jakob Stix and V.~Suresh for their comments on this manuscript.

\section{Decomposition of vectors} \label{decomp}

The goal of this section is to prove a decomposition theorem 
(Theorem~\ref{abstract small factorization})
that will be used in the next section to obtain factorization results and a local-global principle for rational linear algebraic groups.  This strategy parallels that of \cite {HH:FP}, which concerned the group $\GL_n$.

Throughout this section we let $F_0$ be the fraction field of a 
complete discrete valuation ring $\wh R_0$ with uniformizer $t$, and 
we let $|\ |$ be a norm on $F_0$ induced by the $t$-adic valuation ---
i.e. $|a| = \alpha^{-v(a)}$ for a real number $\alpha > 1$.
This norm extends uniquely to a norm on a fixed algebraic closure $\bar F_0$ of $F_0$ (again denoted by $|\ |$).
If $E \subseteq \bar F_0$ is a field extension of $F_0$ and $\md V$ is a 
finite dimensional vector space over $E$ with basis $b_1, \ldots, b_n$, we 
define a norm on $\md V$ by setting $|\sum a_i b_i| = \max\{|a_i|\}$. Since $\md
V$ is finite dimensional, it is complete with respect to this metric if $E$ is finite over $F_0$.  We will commonly identify such a vector space $\md V$ with the points of the
affine space $\mbb A^n_{F_0}(E)$ and consequently talk about the norm of such points as well.  

For $n \ge 0$, the $t$-adic topology on $\A^n_{F_0}(\bar F_0)$ is finer than the Zariski topology.  This is because a basic open set in the Zariski topology is defined by the non-vanishing of a polynomial $f \in F_0[x_1,\dots,x_n]$, and because such an $f$ is continuous in the $t$-adic topology.

Now fix $n$, let $A = F_0[x_1, \ldots, x_{2n}]$ be the coordinate ring of
$\mbb A_{F_0}^{2n}$, and let $\wh A = F_0[[x_1, \ldots, x_{2n}]]$ be the completion at the maximal ideal $\mf m_0$ at the origin.  
Also let $A_0$ be the localization of $A$ at $\mf m_0$; thus $A_0 \subset \wh A$.  For short, we write $x$ for $(x_1,\dots,x_{2n})$.
Given a $2n$-tuple  $\nu = (\nu_1, \ldots, \nu_{2n}) \in \N^{\,2n}$ of nonnegative integers, write $|\nu| = \sum \nu_i$ and let $x^\nu$ denote 
$x_1^{\nu_1}\cdots x_{2n}^{\nu_{2n}}$, a monomial of total degree $|\nu|$. 
For $f = \sum_\nu c_\nu x^\nu \in \wh A$ we define $\|f\| = \sup\{|c_\nu|\}$ (or $\infty$ if the coefficients are unbounded). Note that $\|f\|$ is finite for $f \in A$.

For a real number $M\ge 1$, let $\wh A_M \subset \wh A$ be the
subset consisting of those $f$ as above such that for all $\nu \in \N^{2n}$ we have $|c_\nu|
\leq M^{|\nu|}$.  Since the absolute value on $F_0$ is non-archimedean, $\wh A_M$ is a ring; and it is complete with
respect to the restriction of the $\mf m_0$-adic topology on $\wh A$.  Note also that $\wh A_M \subset \wh A_{M'}$ if $M < M'$.  In the case that $M = |t|^s$ for some (possibly negative) integer $s$, the subring $\wh A_M \subset \wh A$ is just the power series ring 
$\wh R_0[[x_1t^s, \ldots, x_{2n}t^s]]$.  In general, the next result shows that we can view the elements of $\wh A_M$ as power series functions that are defined and $t$-adically bounded by $1$ on the $t$-adic open disc of radius $M^{-1}$ about the origin in $\mbb A^{2n}(F_0)$.

\begin{lem} \label{convergent series}
{}
\renewcommand{\theenumi}{\alph{enumi}}
\begin{enumerate}
\item \label{rational geometric}
Let $f \in A_0 \subset \wh A$ satisfy $|f(0)| \le 1$.  Then for some $M \ge 1$ we have $f \in \wh A_M$ and $f=g/h$ for some $g,h \in A$ with $h \in \wh A_M^\times$. 
\item \label{convergent}
Let $M \ge 1$ and let $f = \sum_{\nu \in \N^{2n}} c_\nu x^\nu \in \wh A_M$.  If $a \in \A^{2n}(F_0)$ with $|a| < M^{-1}$ then the series 
\[f(a) := \sum_\nu c_\nu a^\nu\] 
converges $t$-adically to an element of $F_0$, of norm at most $1$. 
\item  \label{rational convergence}
In part~(\ref{rational geometric}), 
if $a \in \A^{2n}(F_0)$ with $|a| < M^{-1}$ then
the series $f(a)$ converges $t$-adically to the value $g(a)/h(a) \in F_0$.
\end{enumerate}
\end{lem}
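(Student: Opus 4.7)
The plan is to attack the parts in the order (b), (a), (c), with (b) being a direct norm estimate and (c) following from (a) once I establish multiplicativity of the evaluation map. For part (b), writing $f = \sum c_\nu x^\nu$, the bound $|c_\nu a^\nu| \le M^{|\nu|}|a|^{|\nu|} = (M|a|)^{|\nu|}$ combined with $M|a|<1$ forces the terms to tend to $0$ in the $t$-adic norm, so the series converges by completeness of $F_0$. The non-archimedean triangle inequality then gives norm at most $1$, since the $\nu=0$ term has norm $|c_0|\le 1$ while every subsequent term has norm strictly less than $1$.

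For part (a), I would write $f=g_0/h_0$ with $g_0,h_0 \in A$ and $h_0(0)\ne 0$, and rescale by setting $g := g_0/h_0(0)$ and $h := h_0/h_0(0)$, so that $h(0)=1$ and $g(0)=f(0)$, both of norm at most $1$. Since $g$ and $h$ are polynomials, choosing $M \ge 1$ large enough makes every non-constant coefficient of $g$ and $h$ satisfy $|c_\nu| \le M^{|\nu|}$, placing both in $\wh A_M$. To exhibit $h \in \wh A_M^\times$, I write $h = 1-u$ with $u \in \wh A_M$ and $u(0)=0$, and invert by the geometric series $h^{-1} = \sum_{k \ge 0} u^k$. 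Since $u^k$ has no monomial of degree less than $k$, the coefficient of any fixed $x^\mu$ in this sum is a finite sum over $k \le |\mu|$; each individual term has norm at most $M^{|\mu|}$ (because $u^k \in \wh A_M$, which is a ring), and by the ultrametric inequality so does the sum. Thus $h^{-1} \in \wh A_M$, which gives $f = g\cdot h^{-1} \in \wh A_M$ and $h \in \wh A_M^\times$.

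For part (c), the identity $f=g/h$ in $A_0 \subset \wh A$ amounts to $g = fh$ as power series in $\wh A$. Evaluating both sides at $a$ would give $g(a) = f(a)h(a)$ provided the evaluation map $\wh A_M \to F_0$ is multiplicative. To verify this, for any $F,G \in \wh A_M$ I would expand $(FG)(a) = \sum_\mu a^\mu \sum_{\nu+\nu'=\mu} c_\nu d_{\nu'}$ and rearrange to
\[ (FG)(a) \;=\; \sum_{\nu,\nu'} c_\nu d_{\nu'} a^{\nu+\nu'} \;=\; F(a)\, G(a), \]
where the reordering of the double series is legitimate because every term has norm at most $(M|a|)^{|\nu|+|\nu'|}$, so non-archimedean absolute convergence permits free reindexing. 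Applied to $g=fh$ this yields $g(a)=f(a)h(a)$; moreover, since $h \in \wh A_M^\times$ the same multiplicativity gives $h(a)\cdot h^{-1}(a)=1$, so $h(a)\ne 0$ and $f(a) = g(a)/h(a)$. The main subtlety throughout is the verification in part (a) that the geometric-series inverse $h^{-1}$ actually lies in the smaller ring $\wh A_M$ rather than merely in $\wh A$; this uses the non-archimedean nature of the valuation in an essential way, since the coefficientwise bound $|c_\mu|\le M^{|\mu|}$ is preserved under summation only in the ultrametric setting.
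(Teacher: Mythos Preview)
Your proof is correct and follows essentially the same strategy as the paper's. Part~(b) is identical. In part~(a) both arguments invert $h$ via a geometric series; the paper writes $hh'=1-e$ with $e\in\mf m_0$ and sums $\sum h'e^i$, whereas you first rescale so $h(0)=1$ and sum $\sum(1-h)^k$ directly --- the same mechanism with a tidier normalization. The one substantive stylistic difference is in part~(c): the paper argues via polynomial truncations $f_s$ of $f$, showing $f_sh-g$ has only high-degree terms with controlled coefficients so that $(f_sh-g)(a)\to 0$; you instead establish once and for all that evaluation $\wh A_M\to F_0$ is a ring homomorphism by rearranging the absolutely convergent double series, and then read off $g(a)=f(a)h(a)$ and $h(a)h^{-1}(a)=1$ immediately. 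Your route is a bit more conceptual and reusable, the paper's is more hands-on; both are valid and neither requires any idea the other lacks.
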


\begin{proof}
(\ref{rational geometric})  
Since $A_0$ is the localization of $\wh R_0[x_1,\dots,x_n]$ at the ideal $(x_1,\dots,x_n)$, we may write $f=g/h$ with $g,h \in \wh R_0[x_1,\dots,x_n] \subset A$ and with $h \not\in \mf m_0$.  Here 
$\|g\| \le 1$ and $\|h\| = 1$, so $g,h \in \wh A_1$.
Since $A/\mf m_0$ is a field, there exists $h' \in A$ such that $hh'-1 \in \mf m_0$.  Writing $hh'=1-e$ with $e \in \mf
m_0 \subset A$, we see that the inverse to $h$ in $\wh A$ is given by $\sum_{i \geq 0} h'e^i$ (where this series converges in $\wh A$ because $e \in \mf m_0$).  So $f= \sum_{i \geq 0} gh'e^i \in \wh A$.
Let $M = \max\{1, \|h'\|, \|e\| \} < \infty$.   Thus $g, h, h', e \in \wh A_M$ (using that the nonconstant coefficients satisfy
the required condition by our choice of $M$, and the constant coefficients have absolute value at most $1$).
Furthermore, since $\wh A_M$ is a ring, each
term of $\sum h' e^i$ and of $\sum gh' e^i$ is also in $\wh A_M$.  Since $\wh A_M$ is complete with respect to the restriction of the $\mf m_0$-adic topology on $\wh A$, these series converge to 
elements of $\wh A_M \subset \wh A$.  Therefore, $f, h^{-1} \in \wh A_M$ and $h \in \wh A_M^\times$.

(\ref{convergent}) Since $f \in \wh A_M$, we have that $|c_\nu| \le M^{|\nu|}$ for each $\nu$.   Let $a \in \A^{2n}(F_0)$ with $m := |a| < M^{-1}$. 
Thus
$|c_\nu a^\nu| \le (mM)^{|\nu|} < 1$ for each $\nu$, since $0 \le mM < 1$.  Since $F_0$ is $t$-adically complete, the series $f(a)$ (which has finitely many terms of each total degree) converges to an element of $F_0$ of norm at most $1$.

(\ref{rational convergence}) Since $h \in A \cap \wh A_M^\times$, 
we have $h(a)h^{-1}(a)=1$ and so $h(a) \ne 0$.
Let $d > \deg(g)$ and let $C = \max\{\|g\|,\|h\|\}$.  Let $f_s$ be the polynomial truncation of the series $f \in \wh A$ modulo the terms of degree $\ge s$.  Thus the sequence $f_s(a)$ converges to some $c \in F_0$, by (\ref{convergent}).
If $s \ge d$, then $k_s := f_s h - g$ is a polynomial whose terms each have degree $\ge s$ and for which the coefficients of the terms of degree $j$ have absolute value at most $M^j C$.  With $m:=|a|$, the absolute values of the terms of degree $j$ in $k_s(a)$ are at most $(mM)^jC$, and so $|k_s(a)| \le (mM)^sC$.  Thus $k_s(a) \to 0$, since $0 < mM < 1$. That is, $f_s(a) h(a) \to g(a)$, or equivalently $c h(a) = g(a)$.  
Thus $c = g(a)/h(a)$, i.e.\ the series $f(a)$ converges to $g(a)/h(a)$.
\end{proof}

\begin{lem}\label{taylor works}
Suppose $f \in \wh A_M$ with $M \ge 1$, and write
\[f = c_{0,0} + L + \sum_{|\nu| \geq 2} c_\nu x^\nu\]
where $L$ is a linear form in $x_1,\dots,x_{2n}$ and all $c_\nu \in F_0$.
Let $s \ge 0$, let $0 < \varepsilon \leq |t|/M^2$, and suppose $a,  a' \in \A^{2n}(F_0)$ with $|a| \leq \varepsilon$ and $|a'| \leq \varepsilon|t|^s$. Then
\[\left|f(a + a') - f(a) - L(a')\right| \leq \varepsilon |t|^{s+1}.\]
\end{lem}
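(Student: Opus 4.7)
The plan is to reduce the inequality to a termwise estimate on the Taylor remainder, using the fact that the $F_0$-norm is non-archimedean.

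First I would expand out what I want to bound. Writing $f = c_{0,0} + L + \sum_{|\nu|\ge 2} c_\nu x^\nu$ and using linearity of $L$, the constant and linear pieces give
\[ f(a+a') - f(a) - L(a') \;=\; \sum_{|\nu|\ge 2} c_\nu \bigl((a+a')^\nu - a^\nu\bigr), \]
so it suffices to bound each summand by $\varepsilon |t|^{s+1}$ and invoke the ultrametric (which applies because all the tail series converge in $F_0$ by Lemma~\ref{convergent series}(\ref{convergent})).

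Next I would estimate $(a+a')^\nu - a^\nu$ for a single $\nu$ with $|\nu|\ge 2$. Expanding by the multinomial theorem, $a^\nu$ cancels and what remains is a sum of monomials each of which contains at least one factor coming from $a'$ (and the rest from $a$). A monomial with $k\ge 1$ factors from $a'$ has absolute value at most $|a|^{|\nu|-k}|a'|^{k}\le \varepsilon^{|\nu|-k}(\varepsilon|t|^s)^{k}=\varepsilon^{|\nu|}|t|^{sk}\le \varepsilon^{|\nu|}|t|^{s}$, where the last inequality uses $|t|<1$, $k\ge 1$, and $s\ge 0$. Applying the ultrametric to the (finitely many) monomials, $|(a+a')^\nu-a^\nu|\le \varepsilon^{|\nu|}|t|^s$, and multiplying by $|c_\nu|\le M^{|\nu|}$ (from $f\in \wh A_M$) gives
\[ \bigl|c_\nu\bigl((a+a')^\nu-a^\nu\bigr)\bigr| \;\le\; (M\varepsilon)^{|\nu|}\,|t|^s. \]

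The last step is to exploit the standing hypothesis $\varepsilon\le |t|/M^2$, which says $M\varepsilon\le |t|/M\le |t|\le 1$ (here $M\ge 1$ and $|t|<1$). For any $|\nu|\ge 2$, I factor $(M\varepsilon)^{|\nu|}=(M\varepsilon)^2\cdot (M\varepsilon)^{|\nu|-2}$; the first factor satisfies $(M\varepsilon)^2=M\varepsilon\cdot M\varepsilon\le M\varepsilon\cdot(|t|/M)=\varepsilon|t|$, and the second is at most $1$. Hence $(M\varepsilon)^{|\nu|}|t|^s\le \varepsilon |t|^{s+1}$ for every $|\nu|\ge 2$. A final application of the ultrametric to the convergent sum over $\nu$ then yields the desired bound.

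I do not expect any serious obstacle: the content is purely a non-archimedean Taylor remainder estimate. The only small technical point is making sure the multinomial expansion is handled uniformly in $|\nu|$, which is why the factorization $(M\varepsilon)^{|\nu|}=(M\varepsilon)^2(M\varepsilon)^{|\nu|-2}$ (together with $M\varepsilon\le 1$) is the efficient way to use the hypothesis $\varepsilon\le |t|/M^2$.
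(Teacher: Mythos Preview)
Your proof is correct and follows essentially the same route as the paper's: reduce to a termwise estimate via the ultrametric, expand $(a+a')^\nu-a^\nu$ multinomially to get the bound $(M\varepsilon)^{|\nu|}|t|^s$, and then use $\varepsilon\le |t|/M^2$ together with $|\nu|\ge 2$ to conclude $(M\varepsilon)^{|\nu|}\le \varepsilon|t|$. The only cosmetic difference is that the paper bounds $\varepsilon^{j-1}\le |t|/M^j$ directly rather than factoring $(M\varepsilon)^{|\nu|}=(M\varepsilon)^2(M\varepsilon)^{|\nu|-2}$, but these are equivalent manipulations; you might also make explicit that the integer multinomial coefficients have norm at most $1$, though this is implicit in your ultrametric step.
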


\begin{proof}
We may rearrange the quantity of interest as:
\[f(a + a') - f(a) - L(a') = \sum_{|\nu| \geq 2} c_\nu \left((a + a')^\nu - a^\nu \right).\]
Since the absolute value is non-archimedean, it suffices to show that for every
term $m = c_\nu x^\nu$ with $|\nu| \geq 2$ we have
\[|m(a + a') - m(a)| \leq \varepsilon|t|^{s+1}.\]

For a given $\nu$ with $|\nu| \ge 2$, consider the expression $(x + x')^\nu -
x^\nu$, regarded as a homogeneous element of degree $j = |\nu|$
in the polynomial ring
$F_0[x_1, \ldots, x_{2n}, x_1', \ldots, x_{2n}']$.
Since the terms of degree $j$ in $x_1, \ldots, x_{2n}$ cancel, the result is a sum of terms of the form $\lambda \ell$ where
$\lambda$ is an integer and $\ell$ is a monomial in the variables $x, x'$ with total degree $d$ in $x_1, \ldots, x_{2n}$ and total degree $d'$ in $x_1', \ldots, x_{2n}'$, such that $d
+ d' = j$ and $d < j$.  Hence $d' \ge 1$. Consequently, for each term of this form, 
\[|\lambda \ell(a, a')| \leq |\ell(a, a')| \leq \varepsilon^d(\varepsilon
|t|^s)^{d'} = \varepsilon ^ {j} |t|^{sd'}
\leq \varepsilon^{j}|t|^s.\]
Since $(a + a')^\nu - a^\nu$ is a sum of such terms, and the norm is
non-archimedean, we conclude $|(a + a')^\nu - a^\nu| \leq
\varepsilon^{j}|t|^s$.  

Since $m = c_\nu x^\nu$, it follows that  
\[|m(a + a') - m(a)| \leq |c_\nu|\varepsilon^{j} |t|^s  \leq
M^{j} \varepsilon^{j} |t|^s.\]
Now $\varepsilon \leq |t|/M^2$, so $\varepsilon^{j-1} \leq |t|^{j-1}/M^{2j - 2}$.
Since $|t| < 1$, $M \ge 1$, and $j \geq 2$, we have
\[\varepsilon^{j-1} \leq \frac{|t|^{j-1}}{M^{j + j - 2}} \leq \frac{|t|}{M^j}.\]
Rearranging this gives the inequality $(M\varepsilon)^j \leq \varepsilon |t|$ and so $(M \varepsilon)^j |t|^s \leq \varepsilon |t|^{s+1}$. Therefore
\[|m(a + a') - m(a)| \leq M^j \varepsilon^j |t|^s \leq \varepsilon |t|^{s+1},\]
as desired.
\end{proof}

For the remainder of this section, it will be convenient to write $y_i = x_{n+i}$ for $i=1,\dots,n$.  We will let $\nu = (\nu_1, \ldots, \nu_n)$ and $\rho = (\rho_1, \ldots, \rho_n)$ be $n$-tuples of non-negative integers; and for such $\nu, \rho$ we will write $|(\nu, \rho)| = \sum \nu_i + \sum
\rho_i$ and will let $x^\nu y^\rho$ denote 
$x_1^{\nu_1}\cdots x_n^{\nu_n} y_1^{\rho_1}\cdots y_n^{\rho_n}$, a monomial of total degree $|(\nu, \rho)|$.  An element of $\wh A$ will be written as $f = \sum_{\nu,
\rho} c_{\nu, \rho} x^\nu y^\rho$ with $c_{\nu, \rho} \in F_0$.

\begin{lem}\label{special taylor}
Let $f \in \mf m_0 A_0$, and suppose there is some $1 \leq i \leq n$ such that $f(a, 0) = a_i =
f(0, a)$ for all $a = (a_1, \ldots, a_n) \in F_0^n$ for which $f(a, 0)$ and $f(0,
a)$ converge. Then $f\in \wh A_M \subset \wh A$ for some $M \ge 1$, and its
expansion has the form
\[f = x_i + y_i + \sum_{|(\nu, \rho)| \geq 2} c_{\nu, \rho} x^\nu y^\rho.\] 
\end{lem}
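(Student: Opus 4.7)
The plan is to leverage Lemma~\ref{convergent series} to reduce the problem to an identity of power series, and then use the uniqueness of power series expansions to read off the linear part.

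First, since $f \in \mf m_0 A_0$, we have $f(0) = 0$, so $|f(0)| \le 1$. By Lemma~\ref{convergent series}(\ref{rational geometric}), there exists $M \ge 1$ such that $f \in \wh A_M$ (in particular $f \in \wh A$, justifying the existence of a power series expansion $f = \sum c_{\nu, \rho} x^\nu y^\rho$), and we may write $f = g/h$ with $g, h \in A$ and $h \in \wh A_M^\times$.  Since $f \in \mf m_0$, $c_{0,0} = 0$.

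Next I would promote the pointwise identity $f(a,0) = a_i$ to a polynomial identity, then to a power-series identity. For any $a \in F_0^n$ with $|a| < M^{-1}$ (padding with zeros in the $y$-coordinates to stay in the domain of convergence), Lemma~\ref{convergent series}(\ref{rational convergence}) shows that $f(a, 0)$ converges to $g(a,0)/h(a,0)$, which by hypothesis equals $a_i$. Hence $g(a, 0) - a_i \, h(a,0) = 0$ for all $a$ in the $t$-adic open ball of radius $M^{-1}$ in $F_0^n$. Since a nonzero polynomial in $F_0[x_1,\dots,x_n]$ cannot vanish on such a ball (restricting to any line reduces to the one-variable case, and $F_0$ is infinite), the polynomial identity
\[g(x, 0) - x_i \, h(x, 0) = 0\]
holds in $F_0[x_1,\dots,x_n]$. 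Multiplying $f = g/h$ by $h$ in $\wh A$ and setting $y=0$ gives $f(x,0)\,h(x,0) = g(x,0) = x_i\,h(x,0)$ in $F_0[[x_1,\dots,x_n]]$. Since $h \in \wh A_M^\times$, $h(0,0) \ne 0$, so $h(x,0)$ is a unit in $F_0[[x_1,\dots,x_n]]$, and we conclude $f(x, 0) = x_i$ in $F_0[[x_1,\dots,x_n]]$. The analogous argument yields $f(0, y) = y_i$ in $F_0[[y_1,\dots,y_n]]$.

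These two identities pin down all the coefficients $c_{\nu, 0}$ and $c_{0, \rho}$: namely $c_{\nu, 0} = \delta_{\nu, e_i}$ and $c_{0, \rho} = \delta_{\rho, e_i}$. Every monomial of total degree $\le 1$ is either constant, purely in the $x$ variables, or purely in the $y$ variables, so these identities together determine every term with $|(\nu, \rho)| \le 1$: the linear part of $f$ is exactly $x_i + y_i$, and all terms of degree $0$ vanish. Collecting the remaining terms of total degree $\ge 2$ gives the claimed expansion. The only mildly delicate step is the passage from ``$f(a,0) = a_i$ for small $a$'' to a formal identity in $F_0[[x_1,\dots,x_n]]$, which I would handle via the Zariski-density argument above applied to the polynomials $g$ and $h$ furnished by Lemma~\ref{convergent series}(\ref{rational geometric}).
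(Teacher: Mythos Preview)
Your argument is correct and takes a genuinely different route from the paper. The paper subtracts off $x_i+y_i$, sets $g=f-x_i-y_i$, and argues by contradiction that no degree-one coefficient of $g$ can survive: choosing a specific small test point $v=(t^N,0,\dots,0)$, it uses the bound $|c_{\nu,\rho}|\le M^{|(\nu,\rho)|}$ to show that the linear term of $g$ strictly dominates every higher-degree term at $v$, forcing $|g(v)|=|L(v)|\ne 0$ and contradicting $g(v)=0$. By contrast, you exploit the representation $f=g/h$ from Lemma~\ref{convergent series}(\ref{rational geometric}) to turn the pointwise identity on a $t$-adic ball into a genuine polynomial identity $g(x,0)=x_i\,h(x,0)$, and then cancel the unit $h(x,0)$ in the formal power series ring to obtain $f(x,0)=x_i$ exactly. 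Your method is more algebraic and in fact proves a bit more (all coefficients $c_{\nu,0}$ and $c_{0,\rho}$ are pinned down, not just the linear ones); the paper's method is a dominant-term estimate in the spirit of the norm computations that pervade Section~\ref{decomp}. One small point: your parenthetical justification that a nonzero polynomial cannot vanish on the ball (``restricting to any line'') is imprecise---the clean argument is that the ball contains a product of infinite subsets of $F_0$, and one proceeds by induction on the number of variables---but the conclusion is of course standard.
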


\begin{proof}
By Lemma~\ref{convergent series}(\ref{rational geometric}), both~$f$ and
$g := f - x_i - y_i  \in \mf m_0 A_0$ lie in $\wh A_M$ for some $M \ge 1$; in particular, $g =
\sum_{\nu, \rho} c_{\nu, \rho} x^\nu y^\rho$
with each $|c_{\nu, \rho}| \le M^{|(\nu, \rho)|}$.  Here $g$
converges in a $t$-adic neighborhood of $(0,0)$, on which $g(a,b) = 0$ if $a = 0$ or $b = 0$.  To prove the result it 
suffices to show that $c_{\nu, \rho}=0$ for $|(\nu, \rho)| < 2$.  This is automatic for $c_{0,0}$ since $g \in \mf m_0 A_0$.  It remains to
show that $c_{\nu, \rho} = 0$ for $|(\nu, \rho)| =1$.

We argue by contradiction. Suppose that there exists $(\nu_0, \rho_0)$ such that
$c_{\nu_0, \rho_0} \neq 0$ with $|(\nu_0, \rho_0)| = 1$. Without loss of generality, we
may assume that $\nu_0 = (1,0,\dots,0)$ and $\rho_0 = (0,0,\dots,0)$.  Choose $0 < m < 1$ such that $m \leq |c_{\nu_0,
\rho_0}|$, and $N > 0$ such that $|t^N| < m/M^2$.  Let $v= 
(t^N,0, \ldots, 0) \in \mbb
A^{2n}$;  thus $g(v) = 0$. 
Also, $|L(v)| \geq m|t^N| > 0$, where $L$ is the sum of the terms of $g$ of degree $1$.  So $L(v) \ne 0$.

Now let $h = c_{\nu, \rho} x^{\nu} y^{\rho}$ be an arbitrary
term of $g$ whose degree $j:=|(\nu, \rho)|$ is at least $2$. We claim $|h(v)| <
|L(v)|$.  Showing this for all such $h$ would imply that $|g(v)| = |L(v)|$.
Since $g(v) = 0 \ne L(v)$, this
would lead to a contradiction.  

To verify the claim, we may assume
$h(v) \neq 0$.  Using the definition of $v$, we see directly that $h = c x_1^j$ for some $c \in F_0$, and that $|h(v)| = |c||t^N|^j$.  Here $|c| \le M^j$ since $g \in \wh A_M$ and $h$ is a term of $g$.  We compute
\[\frac{|L(v)|}{|h(v)|} \geq \frac{m|t^N|}{|c||t^N|^j} \geq
\frac{m}{M^j}\frac{1}{|t^N|^{j-1}}.\]
Since $|t^N| < m/M^2$, we have $1/|t^N| > M^2/m$ and so 
$1/|t^N|^{j-1} >
M^{2(j-1)}/m^{j-1}$. Combining this with the above, 
\[\frac{|L(v)|}{|h(v)|} \geq \frac{m}{M^j}\frac{1}{|t^N|^{j-1}} >
\frac{m}{M^j}\frac{M^{2j - 2}}{m^{j-1}} = \frac{M^{j-2}}{m^{j-2}} \ge 1,\]
because $j \ge 2$, $M \ge 1$, and $0 < m < 1$.  So $|L(v)| >
|h(v)|$ as desired.
\end{proof}

For the next result and for use in the next section, we make the following hypothesis, continuing under the notation introduced at the beginning of the current section:

\begin{hyp}\label{factorization hyp}
We assume that the complete discrete valuation ring $\wh R_0$ contains a
subring $T$ which is also a complete discrete valuation ring having uniformizer $t$, and that 
$F_1, F_2 $ are subfields of $F_0$ containing $T$.  We further assume that $\m V \subset F_1 \cap \wh R_0$, $\m W \subset F_2
\cap \wh R_0$ are $t$-adically complete $T$-submodules satisfying
$\m V + \m W  = \wh R_0$. 
\end{hyp}

The main theorem of this section is the following decomposition result, which is related to \cite{HH:FP}, Proposition~3.2 (with $\mbb A_{F_0}^n$ here corresponding to the affine space of square matrices of a given size):

\begin{thm} \label{abstract small factorization}
Under Hypothesis~\ref{factorization hyp}, let $f : \mbb A^n_{F_0} \times \mbb A^n_{F_0} \dra \mbb A^n_{F_0}$ be an $F_0$-rational map that
is defined on a Zariski open set $U \subseteq \mbb A^n_{F_0}
\times \mbb A^n_{F_0}$ containing the origin $(0,0)$. Suppose further that 
$f(u, 0) = u = f(0, u)$ whenever $(u, 0)$ (resp.\ $(0,u)$) is in $U$.  Then there is a real number $\varepsilon > 0$ 
such that for all $a \in \mbb A^n(F_0)$ with $|a| \leq \varepsilon$, there exist $v \in \m V^n$ and $w \in \m W^n$ such that $(v,w) \in U(F_0)$ and $f(v, w) = a$.
\end{thm}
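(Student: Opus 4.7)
\emph{Plan.} I will solve $f(v,w) = a$ by a Newton/Hensel-style iteration, exploiting the fact that $f$ is infinitesimally close to $(v,w) \mapsto v+w$ near the origin in order to correct the error at each step via the splitting $\m V + \m W = \wh R_0$. First, since $(0,0) \in U$ and $f(u,0)=u$ forces $f(0,0)=0$, each coordinate $f_i$ of $f$ lies in $\mf m_0 A_0$ and satisfies the hypothesis of Lemma~\ref{special taylor}; hence each $f_i$ belongs to $\wh A_M$ for some common $M \ge 1$ and has linear part exactly $x_i + y_i$. Because the $t$-adic topology refines the Zariski topology and $U$ is Zariski open around $(0,0)$, I may fix $\varepsilon$ of the form $|t|^c$ with integer $c \ge 1$, satisfying $\varepsilon \le |t|/M^2$, small enough that every $(v,w) \in F_0^{2n}$ with $|v|, |w| \le \varepsilon$ lies in $U$.

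Given $a$ with $|a| \le \varepsilon$, set $v_0 = w_0 = 0$ and build inductively $v_k \in \m V^n$, $w_k \in \m W^n$ satisfying $|v_k|, |w_k| \le \varepsilon$ and $|e_k| \le \varepsilon |t|^k$, where $e_k := a - f(v_k, w_k)$. The base case is immediate. For the inductive step, Lemma~\ref{convergent series}(b) gives $f(v_k,w_k) \in \wh R_0^n$, so $e_k \in \wh R_0^n$ with $|e_k| \le |t|^{k+c}$; therefore $t^{-(k+c)} e_k \in \wh R_0^n$, and Hypothesis~\ref{factorization hyp} provides a componentwise decomposition $t^{-(k+c)} e_k = v' + w'$ with $v' \in \m V^n$ and $w' \in \m W^n$. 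Define $v_{k+1} := v_k + t^{k+c} v'$ and $w_{k+1} := w_k + t^{k+c} w'$, which lie in $\m V^n$ and $\m W^n$ respectively, using that $t \in T$ and $\m V, \m W$ are $T$-modules. By construction the increments have norm $\le \varepsilon |t|^k$ and sum componentwise to $e_k$, so Lemma~\ref{taylor works}, applied to each coordinate $f_i$ with parameter $s = k$ and linear part $x_i + y_i$, yields $|e_{k+1}| \le \varepsilon |t|^{k+1}$ and preserves the norm bound $|v_{k+1}|, |w_{k+1}| \le \varepsilon$, completing the induction.

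By $t$-adic completeness of $\m V^n$ and $\m W^n$, the geometrically shrinking Cauchy sequences $(v_k), (w_k)$ converge to $v \in \m V^n$ and $w \in \m W^n$ with $|v|, |w| \le \varepsilon$; then $(v,w) \in U$ by the choice of $\varepsilon$, and continuity of $f$ on $U$ gives $f(v,w) = a$. The main technical obstacle is calibrating the iteration so that the error is simultaneously deep enough in $t\wh R_0$ to feed into the splitting $\m V + \m W = \wh R_0$ in a way that produces elements of $\m V$ and $\m W$ via multiplication by $t^{k+c}$, \emph{and} small enough for the quadratic estimate of Lemma~\ref{taylor works} to improve the error by one additional factor of $|t|$ per step; taking $\varepsilon$ to be a positive integer power of $|t|$ is what reconciles the two requirements.
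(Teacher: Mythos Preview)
Your argument is correct and follows essentially the same approach as the paper's own proof: expand each $f_i$ via Lemma~\ref{special taylor} to isolate the linear part $x_i+y_i$, choose $\varepsilon=|t|^c\le|t|/M^2$ small enough to land in $U$, and run a Newton-type iteration in which the error $e_k$ is divided by $t^{k+c}$, split through $\m V+\m W=\wh R_0$, and fed back in, with Lemma~\ref{taylor works} supplying the extra factor of $|t|$ at each step. The only point you leave implicit is the identification of the power-series evaluations (to which Lemma~\ref{taylor works} applies) with the rational-function values of $f$; the paper makes this explicit by invoking Lemma~\ref{convergent series}(\ref{rational convergence}), and you should do the same rather than appealing only to ``continuity of $f$ on $U$''.
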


\begin{proof}
Write the function $f$ as an $n$-tuple $(f_1, \ldots, f_n)$ with $f_i \in A_0$.  In fact $f_i \in \mf m_0 A_0$ since $f_i(0,0)=0$.
So by Lemma~\ref{convergent series}(\ref{rational geometric}), there is a real number $M \ge 1$ such that
$f_i \in \wh A_M$ for all $i$; and by Lemma~\ref{special taylor}, 
\[f_i = x_i + y_i + \sum_{|(\nu, \rho)| \geq 2} c_{\nu, \rho, i} x^\nu
y^\rho \eqno{(*)}\]
for some $c_{\nu, \rho, i}$ in $F_0$. 

As noted at the beginning of this section, the $t$-adic topology on affine space is finer than the Zariski topology.  So there exists
$\delta > 0$ such that 
$(v,w) \in U(F_0)$ for all $v,w \in \mbb A^n(F_0)$ satisfying
$|(v,w)| \le \delta$.  
Choose $N > 0$ such that $|t^N| \leq \min\{|t|/M^2,
\delta\}$, and set $\varepsilon = |t^N|$.  In particular, whenever $|(v,w)|
\le \varepsilon$, the point $(v,w)$ lies in $U(F_0)$, and hence $f(v,w)$ is defined.

Now suppose $a = (a_1, \ldots, a_n) \in \mbb A^n(F_0)$ with $|a| \leq
\varepsilon$. We will inductively construct sequences of elements 
$v_j = (v_{1,j}, \ldots, v_{n, j}) \in \m V^n$, 
$w_j = (w_{1, j}, \ldots, w_{n, j}) \in \m W^n$, with
$j \geq 0$, such that $v_0=w_0=(0,\dots,0)$ and
\begin{enumerate}
\item $|(v_j,w_j)| \le \varepsilon$ for all $j \ge 0$;
\item $|v_j - v_{j-1}|, |w_j - w_{j-1}| \leq \varepsilon |t|^{j-1}$ for all $j \ge 1$; and
\item $|f_i(v_j, w_j) - a_i| \leq \varepsilon|t|^j$ for all $j \ge 0$.
\end{enumerate}
Since the $T$-modules $\m V^n, \m W^n$ are $t$-adically complete, the second condition ensures that $v = \lim\limits_{j \to \infty} v_j$ and 
$w = \lim\limits_{j \to \infty} w_j$ exist in $\m V^n$ and $\m W^n$. 
The first condition shows moreover that $|(v,w)| \le \varepsilon$, so that $(v,w) \in U(F_0)$ and $f(v,w)$ is defined.  Finally,
the third condition implies that $f(v, w) = a$.  Thus it suffices to construct such sequences.

It follows from Lemma~\ref{convergent series}(\ref{convergent}) that since at each stage we will have
$|(v_j, w_j)| \le \varepsilon \leq |t|/M^2 < 1/M$, the power series expressions for
$f(v_j, w_j)$ are convergent.  By Lemma~\ref{convergent series}(\ref{rational convergence})
we may identify the limits of these
evaluated power series with the values of the original rational functions.

Observe that the first and third conditions hold for $j=0$.  Now assume inductively that for some $j \ge 0$ we have chosen $v_j, w_j$ satisfying the three asserted 
conditions (except the second, if $j=0$).  Define $b_j = (b_{1, j}, \ldots, b_{n, j}) = a - f(v_j, w_j)$. By the third condition on $(v_j,w_j)$, we have $|b_j| \leq \varepsilon|t|^j = |t|^{N+j}$. Write $b_j = t^{N + j} u_j$ with $u_j \in \wh R_0^n$. By Hypothesis~\ref{factorization hyp}, we may write $u_j = v_{j+1}'
+ w_{j+1}'$ for $v_{j+1}' \in \m V^n$, $w_{j+1}' \in \m W^n$. 

Let $v_{j+1} = v_j + t^{N + j} v_{j+1}'$ and $w_{j+1} = w_j + t^{N + j}
w_{j+1}'$. It is immediate by construction that $|v_{j+1} - v_j|, |w_{j+1} - w_j| \leq \varepsilon |t|^j$ since $|t|^N = \varepsilon$. 
This proves the second condition on $(v_{j+1},w_{j+1})$.
Since $|v_j|, |w_j| \leq
\varepsilon \leq |t|/M^2$ (by the first condition on $(v_j,w_j)$) and since $|t^{N + j} v_{j+1}'|, |t^{N + j} w_{j+1}'| \leq
\varepsilon |t|^{j}$, it follows by equation~${(*)}$ and Lemma~\ref{taylor works} that
\begin{equation*}
\begin{split}
|f_i(v_{j+1}, w_{j+1}) - a_i| 
&= |f_i(v_{j+1}, w_{j+1}) - f_i(v_j, w_j) - b_{i, j}| \\
= \left|f_i(v_j + t^{N + j} v_{j+1}'\right., 
& \left. w_j + t^{N + j} w_{j+1}') - f_i(v_j, w_j)
- t^{N + j}(v_{j+1}' + w_{j+1}')\right| \\
&\leq \varepsilon |t|^{j+1},
\end{split}
\end{equation*}
proving the third condition on $(v_{j+1},w_{j+1})$.  The first condition on $(v_{j+1},w_{j+1})$ holds by the second condition  on $(v_{j+1},w_{j+1})$ together with the first condition on $(v_j,w_j)$, since the norm is non-archimedean and $j \ge 0$.
\end{proof}

The above decomposition theorem will be used in the next section to extend \cite{HH:FP},
Proposition~3.2, which used an additive decomposition to
provide a factorization of matrices in $\GL_n$.  In applying
Theorem~\ref{abstract small factorization} above to obtain factorization in more general rational linear algebraic groups $G$ (Theorem~\ref{factorization} below), we will identify $G$ birationally with an open subset of some affine space, with $f$ above being the map there that corresponds to multiplication in $G$.

\section{Factorization and a local-global principle} \label{rational factorization}

We say that a connected linear algebraic group defined over a field $F$ is \textit{rational}
if it is rational as an $F$-variety. 
In this section we prove factorization theorems for such groups (Theorems~\ref{smooth factorization} 
and~\ref{general factorization}), generalizing results of \cite{HH:FP} about the rational group $\GL_n$.  
The key step is Theorem~\ref{factorization}, which relies on Theorem~\ref{abstract small factorization}.  
Afterwards, in Theorem~\ref{torsor_injective}, we apply this factorization to obtain a local-global principle 
for homogeneous spaces for rational groups.

\begin{lem}\label{zariski rational density}
Let $G$ be a rational connected linear algebraic group over an infinite field $F$, 
let $F_0$ be an extension field of $F$, and let $g \in G(F_0)$. Then there exists
a Zariski open subset $Y \subseteq G$ such that $g \in Y(F_0)$ and such that
$Y$ is $F$-isomorphic to an open subset of affine space over $F$. 
\end{lem}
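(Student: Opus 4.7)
The plan is to obtain $Y$ as a translate, by an $F$-rational element of $G$, of a fixed open subvariety coming from the rationality hypothesis. Since $G$ is rational over $F$, there is a non-empty $F$-open subscheme $U_0 \subseteq G$ together with an $F$-isomorphism $U_0 \xrightarrow{\sim} V_0$ for some open $V_0 \subseteq \A^n_F$. For any $h \in G(F)$, left translation by $h^{-1}$ is an $F$-automorphism of $G$ carrying $U_0$ onto $h^{-1} U_0$; hence $h^{-1} U_0$ is an $F$-open subscheme of $G$ that is $F$-isomorphic to $V_0$. Thus it suffices to find $h \in G(F)$ with $hg \in U_0$, for then $Y := h^{-1} U_0$ satisfies every requirement of the lemma.

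The condition $hg \in U_0$ says that $h$, viewed as an $F_0$-point, lies in the Zariski open subset $W := U_0 g^{-1} \subseteq G_{F_0}$, which is non-empty because right translation by $g^{-1}$ is an automorphism of $G_{F_0}$. So the problem reduces to showing that this non-empty $F_0$-open set $W$ contains an $F$-rational point of $G$; equivalently, that $G(F)$ is Zariski dense in $G_{F_0}$.

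I would establish this density in two stages. First, that $G(F)$ is dense in $G_F$: via the $F$-isomorphism $U_0 \xrightarrow{\sim} V_0$, this reduces to the classical fact that $F$-points of $\A^n_F$ are Zariski dense when $F$ is infinite, combined with the irreducibility of $G_F$ (which holds since $G$ is birational over $F$ to $\A^n_F$ and is therefore geometrically integral). Second, that $F$-density ascends to $F_0$-density: working on an affine open $U \subseteq G_F$, suppose some non-zero $f \in \mc O(U) \otimes_F F_0$ vanishes at every $F$-point of $U$. Writing $f = \sum r_i \otimes c_i$ with the $c_i \in F_0$ linearly independent over $F$ and $r_i \in \mc O(U)$, evaluation at a point $P \in U(F)$ gives $\sum r_i(P) c_i = 0$ in $F_0$, which forces $r_i(P) = 0$ for every $i$; since $U(F)$ is Zariski dense in $U$, each $r_i$ vanishes identically, contradicting $f \neq 0$. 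This density ascent is the only substantive content of the argument; the rest is bookkeeping about translating an affine chart by an $F$-rational group element.
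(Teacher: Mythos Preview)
Your proof is correct and follows essentially the same strategy as the paper: translate the affine chart $U_0$ by an $F$-rational group element so that $g$ lands in it, using Zariski density of $F$-points over the infinite field $F$. The only cosmetic differences are that the paper uses right translation and works with density of $Y'(F)$ in $Y'_{F_0}$ (which it asserts directly from the isomorphism with an open in affine space), whereas you use left translation and spell out the slightly stronger density of $G(F)$ in $G_{F_0}$ together with its ascent to $F_0$.
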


\begin{proof}
Since $G$ is rational, there exists a non-empty irreducible Zariski open subset $Y' \subseteq G$ that is
isomorphic to an open subset of affine space.  Since $F$ is infinite, every non-empty
open subset of affine $F$-space contains an $F$-point. Consequently, there exists a point $y \in Y'(F)$,
and the $F_0$-scheme $y^{-1}g^{-1}Y_{F_0}' \cap Y'_{F_0}$ is a Zariski dense
open subset of $Y'_{F_0}$. 
Since $Y'$ is
$F$-isomorphic to an open subset of affine space, and since $F$ is infinite, it follows that $Y'(F)$ is
dense in $Y'(F_0)$ with respect to the Zariski topology. Therefore there exists
$y' \in Y'(F)$ such that $y' \in y^{-1}g^{-1}Y'(F_0)$. That is to say, 
$g \in Y'(F_0)(yy')^{-1}$.  Setting $Y = Y'(yy')^{-1}$, we find that $Y$ is an
$F$-subscheme of $G$ such that $g \in Y(F_0)$ and $Y \cong Y'$ is $F$-isomorphic to
an open subvariety of affine space.
\end{proof}

The following factorization theorem, which extends \cite{HH:FP}, Proposition~3.2, 
to more general rational linear algebraic groups than $\GL_n$, relies on Theorem~\ref{abstract small factorization} above:

\begin{thm} \label{factorization}
Under Hypothesis~\ref{factorization hyp}, assume that $F$ is a subfield of $F_1 \cap F_2$ that contains $T$, and that $F_1$ is
$t$-adically dense in $F_0$.
Let $G$ be a rational connected linear algebraic group defined over $F$. Then for any $g \in
G(F_0)$ there exist $g_i \in G(F_i)$, $i = 1, 2$, such that $g_1 g_2 = g$.
\end{thm}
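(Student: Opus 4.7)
The plan is to reduce Theorem~\ref{factorization} to the decomposition Theorem~\ref{abstract small factorization} by birationally identifying $G$ with affine space near the identity, and using $t$-adic density of $F_1$ in $F_0$ to first shift $g$ into a small neighborhood of $e$ by left-multiplication by an element of $G(F_1)$.

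First I would set up local coordinates near $e$. Apply Lemma~\ref{zariski rational density} to $e \in G(F)$ to obtain an $F$-open subvariety $Y_0 \subseteq G$ with $e \in Y_0(F)$ which is $F$-isomorphic to an open subset of $\mbb A^n_F$; after composing with the $F$-rational translation sending the image of $e$ to the origin, I may assume $e$ corresponds to $0$. Group multiplication $G \times G \to G$ then restricts to an $F$-rational map $f: \mbb A^n_F \times \mbb A^n_F \dra \mbb A^n_F$ defined on a Zariski open $U \ni (0,0)$, and $f(u,0)=u=f(0,u)$ on $U$ because $0$ represents the identity. Theorem~\ref{abstract small factorization} then produces a real $\varepsilon > 0$ such that every $a \in \mbb A^n(F_0)$ with $|a| \leq \varepsilon$ admits a decomposition $a = f(v,w)$ with $v \in \m V^n$, $w \in \m W^n$, and $(v,w) \in U(F_0)$; since $\m V \subset F_1$ and $\m W \subset F_2$, these $v,w$ correspond to points of $Y_0(F_1)$ and $Y_0(F_2)$ respectively.

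Next I would use density to move $g$ into the target region. Apply Lemma~\ref{zariski rational density} again, now to $g \in G(F_0)$, to obtain an $F$-open $Y \subseteq G$ containing $g$ and $F$-isomorphic to an open subset of some $\mbb A^m_F$. Because $F_1$ is $t$-adically dense in $F_0$, standard density passes to $F$-rational opens of affine space, so $Y(F_1)$ is $t$-adically dense in $Y(F_0)$. The morphism $Y \to G$, $h \mapsto h^{-1} g$, is $t$-adically continuous and sends $g$ to $e$, which has coordinates $0$ in the chart $Y_0$. Consequently, by continuity and density, I can choose $h \in Y(F_1) \subseteq G(F_1)$ such that $h^{-1}g \in Y_0(F_0)$ and the coordinate vector of $h^{-1}g$ in the $\mbb A^n_F$ chart on $Y_0$ has norm at most $\varepsilon$.

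Finally, applying Theorem~\ref{abstract small factorization} to the coordinate vector of $h^{-1}g$ gives $v \in \m V^n$, $w \in \m W^n$ with $f(v,w)$ equal to those coordinates, hence group elements $g_1' \in Y_0(F_1) \subseteq G(F_1)$ and $g_2 \in Y_0(F_2) \subseteq G(F_2)$ with $g_1' g_2 = h^{-1} g$. Setting $g_1 = h g_1' \in G(F_1)$ yields the desired factorization $g_1 g_2 = g$. The main obstacle I expect is the density step: ensuring that $h \in G(F_1)$ can be chosen well enough that $h^{-1}g$ not only lies close to $e$ in $G(F_0)$ but actually falls inside the specific chart $Y_0$ with coordinates of norm $\leq \varepsilon$. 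This requires carefully combining the $t$-adic continuity of multiplication and inversion with the density of $F_1$-points in $F_0$-points of an affine $F$-variety, in order to bridge the global hypothesis on $g$ to the small-disk hypothesis of Theorem~\ref{abstract small factorization}.
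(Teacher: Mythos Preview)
Your proposal is correct and follows essentially the same route as the paper's proof: birationally identify a neighborhood of $e$ with an open in affine space so that multiplication becomes a rational map $f$ satisfying the hypotheses of Theorem~\ref{abstract small factorization}; then use Lemma~\ref{zariski rational density} and the $t$-adic density of $F_1$ in $F_0$ to translate an arbitrary $g$ into the small disk where that theorem applies. The only cosmetic difference is that the paper obtains the chart at $e$ directly from rationality (translating a birational open) rather than by a second invocation of Lemma~\ref{zariski rational density}, and it is slightly more explicit about restricting to $\widetilde U' = \mu^{-1}(U') \cap (U' \times U')$ so that both $v$, $w$, and their product visibly land in the chart; you should make this restriction explicit as well.
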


\begin{proof}
Since $G$ is rational, there is a Zariski dense open subset $U'$ of $G$ that is $F$-isomorphic to a Zariski open subset $U$ of 
$\mbb A_F^n$, where $n$ is the dimension of $G$.  After translating, we may assume that $U'$ contains the identity 
$e \in G$ and that the $F$-isomorphism $\phi: U' \to U \subseteq \mbb A_F^n$ takes $e$ to the
origin in $\mbb A^n(F)$.
Consider the group multiplication map $\mu : G
\times G \to G$, and let $\til U' = \mu^{-1}(U') \cap
(U' \times U') \subseteq G \times G$.  Note that $\til U'$ is a Zariski open subset of $G \times G$ that contains the point 
$(e,e)$, and that
$\mu(\til U') \subseteq U'$. 
The isomorphism $\phi: U' \to U$ induces an isomorphism
$\phi \times \phi|_{\til U'}: \til U' \to \til U$ for some dense open subset 
$\til U \subseteq \mbb A^n_F \times \mbb A^n_F = \mbb A^{2n}_F$.  Hence there exists a morphism $f:\til U \to U$ such that the following diagram commutes:
\[\xymatrix{
G \times G & \supseteq &
\til U' \ar[rr]^{\mu|_{\til U'}} \ar[d]_{\phi \times \phi|_{\til U'}} & & U' \ar[d]^{\phi} & \subseteq & G
\\ 
\mbb A^{2n}_F & \supseteq &
\til U \ar[rr]_{f} & & U & \subseteq & \mbb A^n_F}\]
Since $\mu(g, e) = g = \mu(e, g)$ for $g \in G$, it follows that $f(v, 0) = v = f(0, v)$ when $(v,0)$ and
$(0,v)$ are in $\til U$.  Consequently, it follows from
Theorem~\ref{abstract small factorization} (with $\til U$ here playing the role of $U$ there) 
and the assumptions of Hypothesis~\ref{factorization hyp}
that there is an $\varepsilon > 0$
such that for $g \in U'(F_0)$ with
$|\phi(g)| \le \varepsilon$, there exist $v \in \m V^n$, $w \in \m W^n$ with
$(v, w) \in \til U(F_0)$ such that $f(v, w) = \phi(g)$.  Thus $v \in U(F_1)$ and $w \in U(F_2)$.

To prove the theorem, consider first the special case that $g \in G(F_0)$ satisfies $|\phi(g)| \le \varepsilon$.  
If we set $g_1 = \phi^{-1}(v)$ and $g_2 =
\phi^{-1}(w)$ for $v, w$ as above, then $g_i \in G(F_i)$ for $i = 1, 2$, and $(g_1,g_2) \in \til U'$. But now
we have $g_1 g_2 = g\in G(F_0)$ by the above commutative diagram, as desired.

The general case reduces to the above special case by a classical argument (e.g.\ see \cite{Kneser}).  Namely, by Lemma~\ref{zariski
rational density}, there is an open $F$-subset $Y \subseteq G$ such that $g\in Y(F_0)$, together with an open immersion 
$\psi : Y \to \mbb A^n_F$.  Now $\A^n(F_1)$ is $t$-adically dense in $\A^n(F_0)$, and the $t$-adic topology is finer than the 
Zariski topology.  So since
$\phi(e) = 0 \in \A^n_F$, there exists $h \in Y(F_1) \subseteq G(F_1)$ such that $|\phi(h^{-1} g)| \le \varepsilon$.  
By the first part, $h^{-1} g = g_1' g_2'$ with $g_i' \in
G(F_i)$, $i = 1, 2$.  Setting $g_1 = hg_1' \in G(F_1)$ and $g_2 =
g_2' \in G(F_2)$ gives the desired conclusion $g = g_1 g_2 \in G(F_0)$.
\end{proof}

In order to apply this result, we recall the following notation and terminology, which arose in the context of patching in \cite{HH:FP}:

\begin{notn} [\cite {HH:FP} \rm{(Sections~4 and 6)}] \label{notation}
Let $T$ be a complete discrete valuation ring with uniformizer $t$ and residue field $k$, and let 
$\wh X$ be a normal irreducible projective $T$-curve with function field $F$ and with closed fiber $X$.  
Given an irreducible component $X_0$ of $X$ with generic point $\eta$, consider the local ring of $\wh X$ at $\eta$.  
For a (possibly empty) proper subset $U$ of $X_0$, we let $R_U$ denote the subring of this local ring consisting of the 
rational functions that are regular at each point of $U$. In particular,
$R_\varnothing$ is the local ring of $\wh X$ at the generic point of the
component $X_0$.  The $t$-adic completion of $R_U$ is denoted by $\wh R_U$.
If $P$ is a closed point of $X$, we write $R_P$ for the
local ring of $\wh X$ at $P$, and $\wh R_P$ for its completion at its maximal ideal.  
(Note the distinction $\wh R_P$ vs.\ $\wh R_{\{P\}}$.)  A
height~$1$ prime $\wp$ of $\wh R_P$ that contains $t$ determines a {\it
branch} of $X$ at $P$, i.e.\ an irreducible component of the pullback of $X$
to $\Spec\,\wh R_P$.  Similarly the contraction of $\wp$ to the local ring of
$\wh X$ at $P$ determines an irreducible component $X_0$ of $X$, and we say
that $\wp$ {\it lies on} $X_0$.  Note that a branch $\wp$ uniquely determines a closed point $P$ and an irreducible component $X_0$.  
In general, there can be several branches $\wp$ on $X_0$ at a point $P$; but if $X_0$ is smooth at $P$ then there is
a unique branch $\wp$  on $X_0$ at $P$.  We write $\wh R_\wp$ for
the completion of the localization of $\wh R_P$ at $\wp$; thus $\wh R_P$ is contained in $\wh R_\wp$, 
which is a complete discrete valuation ring.  

Since $\wh X$ is normal, the local ring $R_P$ is integrally closed and hence unibranched; and since $T$ is a complete 
discrete valuation ring, $R_P$ is excellent and hence $\wh R_P$ is a domain (\cite{EGAIV2}, Scholie~7.8.3(ii, iii, vii)).   
For nonempty~$U$ as above and $Q \in U$, $\wh R_U/t^n\wh R_U \to \wh R_Q/t^n\wh R_Q$ is injective for all $n$ and hence 
$\wh R_U \to \wh R_Q$ is also injective. Thus $\wh R_U$ is also a domain. Note
that the same is true if $U$ is empty. The fraction fields
of the domains $\wh R_U$, $\wh R_P$, and $\wh R_\wp$ will be denoted by $F_U$,
$F_P$, and $F_\wp$.  

If $\wp$ is a branch at $P$ lying on the closure of $U \subset X_0$, then there are natural inclusions of $\wh R_P$ and $\wh R_U$ into $\wh R_\wp$, and hence of $F_P$ and $F_U$ into $F_\wp$.  The inclusion of $\wh R_P$ was observed above; for $\wh R_U$, note that the localizations of $R_U$ and of $R_P$ at the generic point of $X_0$ are the same (viz.\ $R_\varnothing$); and this localization is naturally contained in the $t$-adically complete ring $\hat R_\wp$.  Thus so is 
$R_U$ and hence its $t$-adic completion $\wh R_U$.   
\end{notn}

\begin{thm} [(Factorization over smooth curves)] \label{smooth factorization}
Let $T$ be a complete discrete valuation ring, let $\wh X$ be a smooth connected projective $T$-curve with function field $F$ 
and closed  fiber $X$.  Define fields $F_i$, $i=0,1,2$, by one of the following:
\begin{enumerate}
\item \label{global}
$F_i =F_{U_i}$ where $U_1, U_2$ are proper subsets of $X$, $U_1 \cup U_2 = X$, and $U_0 = U_1 \cap U_2$.  (Note that here $F_0$  
is not a complete discretely valued field unless $U_0 = \varnothing$.)
\item \label{local-global}
$F_1 = F_P$, $F_2 = F_U$, and $F_0 = F_\wp$, where $P$ is a closed point of $X$
  with complement $U \subset X$, and $\wp$ corresponds to the (unique) branch of $X$ at $P$. 
\item \label{local}
$F_1$ and $F_0$ are as in (\ref{local-global}) and $F_2 = F_\varnothing$. 
\end{enumerate}
Let $G$ be a rational connected linear algebraic group defined over $F$.  Then for any $g \in
G(F_0)$ there exist $g_i \in G(F_i)$, $i = 1, 2$, such that $g_1 g_2 = g$.
\end{thm}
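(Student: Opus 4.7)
My plan is to realize each of the three cases as an instance of Theorem~\ref{factorization}, verifying Hypothesis~\ref{factorization hyp} with appropriate choices. Cases~(2) and~(3), in which $F_0 = F_\wp$ is already the fraction field of a complete discrete valuation ring, fit directly into that hypothesis. Case~(1) with $U_0 \ne \varnothing$ does not, since then $F_{U_0}$ is not the fraction field of a complete DVR, and so will be handled by induction, reducing to case~(2) at each step.

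For cases~(2) and~(3), the smoothness of $\wh X$ ensures that the unique branch $\wp$ at $P$ corresponds to a height-one prime of $\wh R_P$ in which $t$ generates the maximal ideal, so $\wh R_0 := \wh R_\wp$ is a complete DVR with uniformizer $t$. I would take $\m V := \wh R_P$ and $\m W := \wh R_U$ (case~(2)) or $\m W := \wh R_\varnothing$ (case~(3)). The inclusions $\m V \subseteq F_1 \cap \wh R_0$ and $\m W \subseteq F_2 \cap \wh R_0$ come from Notation~\ref{notation}, and both are $t$-adically complete $T$-submodules by construction. The crucial requirement $\m V + \m W = \wh R_\wp$ is a patching identity established in \cite{HH:FP}, in which smoothness at $P$ and uniqueness of $\wp$ play the key role. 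The remaining conditions of Theorem~\ref{factorization} are immediate: $F \subseteq F_1 \cap F_2$ by construction, and $F_P$ is $t$-adically dense in $F_\wp$ because the localization of $\wh R_P$ at $\wp$ lies in $F_P$ and is $t$-adically dense in $\wh R_\wp$. Theorem~\ref{factorization} then yields cases~(2) and~(3).

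For case~(1), I would induct on $N := |X \setminus U_0|$, which equals $|(U_1 \cup U_2) \setminus U_0|$ since $U_1 \cup U_2 = X$. If $U_0 = \varnothing$, then $F_0 = F_\varnothing$ is the fraction field of the complete DVR $\wh R_\varnothing$, and Theorem~\ref{factorization} applies directly with $\m V := \wh R_{U_1}$ and $\m W := \wh R_{U_2}$, the requisite sum decomposition again coming from \cite{HH:FP}. For the inductive step with $U_0 \ne \varnothing$, pick any $P \in U_1 \setminus U_2$ (nonempty since $U_2$ is proper), let $\wp$ be the branch at $P$, and apply case~(2) to the image of $g \in G(F_{U_0})$ in $G(F_\wp)$ to write $g = a_P \cdot b$ with $a_P \in G(F_P)$ and $b \in G(F_{X \setminus \{P\}})$. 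Since $U_2 \subseteq X \setminus \{P\}$, the inclusion $F_{X \setminus \{P\}} \subseteq F_{U_2}$ places $b$ in $G(F_{U_2})$. The element $a_P = g \cdot b^{-1}$ lies in $G(F_P) \cap G(F_{U_0})$ inside $G(F_\wp)$, and by the patching identity $F_P \cap F_{U_0} = F_{U_0 \cup \{P\}}$ (again from \cite{HH:FP}) it actually lies in $G(F_{U_0 \cup \{P\}})$. If $U_2 \cup \{P\} = X$ (equivalently $U_0 \cup \{P\} = U_1$) then $a_P \in G(F_{U_1})$, and $g = a_P \cdot b$ is the desired factorization. Otherwise, the pair $(U_1, U_2 \cup \{P\})$ satisfies the hypotheses of case~(1) with overlap $U_0 \cup \{P\}$ and strictly smaller $N$, so the inductive hypothesis produces a factorization $a_P = g_1 \cdot g_2'$ with $g_1 \in G(F_{U_1})$ and $g_2' \in G(F_{U_2 \cup \{P\}}) \subseteq G(F_{U_2})$, yielding $g = g_1 \cdot (g_2' b)$.

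The main obstacle is the patching identity $F_P \cap F_{U_0} = F_{U_0 \cup \{P\}}$ inside $F_\wp$, which is the crucial input for the inductive step in case~(1), together with the sum decompositions $\wh R_P + \wh R_U = \wh R_\wp$ (and analogs) used in cases~(2) and~(3). These are the smooth-curve patching identities established in \cite{HH:FP}; with them in hand, Theorem~\ref{factorization} provides the local factorization for the rational group $G$ at each step, generalizing the $\GL_n$ arguments of \cite{HH:FP} to the present setting.
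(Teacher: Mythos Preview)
Your overall plan—verifying Hypothesis~\ref{factorization hyp} and invoking Theorem~\ref{factorization}—is the right one, and your treatment of case~(\ref{local}) matches the paper's. But the dependency structure you propose for the other two cases is inverted relative to the paper, and several of the identities you attribute to \cite{HH:FP} are not actually what that paper provides.

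\textbf{Case~(\ref{local-global}).} You want to apply Theorem~\ref{factorization} directly with $\m V=\wh R_P$, $\m W=\wh R_U$, citing $\wh R_P+\wh R_U=\wh R_\wp$ from \cite{HH:FP}. The relevant result there (Lemma~5.3, used in the paper for case~(\ref{local})) gives only $\wh R_P+\wh R_\varnothing=\wh R_\wp$, and $\wh R_U\subsetneq\wh R_\varnothing$, so the sum you need does not follow. The paper instead \emph{derives} case~(\ref{local-global}) from cases~(\ref{global}) and~(\ref{local}): factor $g=g_1'g_2'$ via~(\ref{local}) with $g_2'\in G(F_\varnothing)$, then factor $g_2'$ via~(\ref{global}) with $U_1=\{P\}$, $U_2=U$, using $F_{\{P\}}\subset F_P$.

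\textbf{Case~(\ref{global}), $U_0=\varnothing$.} You take $\m V=\wh R_{U_1}$, but the sum identity in \cite{HH:FP}, Proposition~4.5, uses a Riemann--Roch module $\m V=L(\Spec\wh R_{U_1},N\wh P)$ with $\wh P$ lifting a point $P\in U_1$ and $N>2g_X-2$. This $\m V$ allows poles along $\wh P$, so it is \emph{not} contained in $\wh R_{U_1}$; thus $\m V+\wh R_{U_2}=\wh R_\varnothing$ does not yield $\wh R_{U_1}+\wh R_{U_2}=\wh R_\varnothing$. (The paper remarks that $\m V=\wh R_{U_1}$ does work when $\wh X=\PP^1_T$, corresponding to $N=0$.) One must also check that $F_{U_1}$ is $t$-adically dense in $F_\varnothing$; the paper does this explicitly.

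\textbf{Case~(\ref{global}), general $U_0$.} The paper reduces this in a \emph{single} step to $U_0=\varnothing$: set $U_2'=U_2\smallsetminus U_0$, factor $g\in G(F_{U_0})\subset G(F_\varnothing)$ as $g_1g_2$ with $g_1\in G(F_{U_1})$, $g_2\in G(F_{U_2'})$ via the empty-intersection case, then note $g_2=g_1^{-1}g\in G(F_{U_2'})\cap G(F_{U_0})$ and invoke $F_{U_2'}\cap F_{U_0}=F_{U_2}$ from \cite{HH:FP}, Theorem~4.9. Your induction instead hinges on $F_P\cap F_{U_0}=F_{U_0\cup\{P\}}$, which mixes the point-completion field $F_P$ with a subset-field; Theorem~4.9 of \cite{HH:FP} handles only intersections $F_V\cap F_{V'}$ for subsets $V,V'$ of $X$, so this identity is not available there. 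Combined with the gap in your direct approach to~(\ref{local-global}), on which your induction depends, this leaves case~(\ref{global}) unproved.

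In short: the paper proves~(\ref{global}) first (empty case directly, general case by one intersection step among subset-fields), then~(\ref{local}) directly, and obtains~(\ref{local-global}) as a corollary of the two. Your reversed ordering forces you to cite sum and intersection identities that \cite{HH:FP} does not supply.
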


\begin{proof}
It suffices to show that the hypotheses of Theorem~\ref{factorization} hold in each of the three parts of the above assertion.  

For part~(\ref{global}), write $\wh R_i = \wh R_{U_i}$.  We first assume that $U_0$ is empty, so that $\wh R_0$ is a complete discrete valuation ring with uniformizer $t$.  In this situation let
$P$ be a closed point of $U_1$ and let $\wh P$ be a lift of $P$ to $\wh X$ (i.e.\ an effective prime divisor on $\wh X$ whose restriction to $X$ is $P$; see \cite{HH:FP}, Section~4.1).  Let $g_X$ be the genus of $X$, pick a non-negative integer $N > 2g_X-2$, and let $\m V = L(\Spec \wh R_1, N\wh P)$, the $T$-submodule of $F_1$ consisting of rational functions on $\Spec \wh R_1$ whose pole divisor is at most $N \wh P$.  Note that $\m V \subset \wh R_0$ since these rational functions do not have poles along the closed fiber $(t)$.  Let $\m W = \wh R_2$.  Then Hypothesis~\ref{factorization hyp} holds for these rings and modules, by \cite{HH:FP}, Proposition~4.5.  

To complete the proof of part~(\ref{global}) in this case, it suffices to verify that the hypotheses of Theorem~\ref{factorization} are satisfied, i.e.\ that $F_1$ is $t$-adically dense in $F_0$.
Since the fraction field of $\wh R_1/t\wh R_1$ is the same as   
$\wh R_0/t\wh R_0$ (viz.\ the function field of $X$), it follows from
\cite{HH:FP}, Lemma~3.1(a), that the ring $R_0 := \wh R_0 \cap F_1$ is $t$-adically dense in $\wh R_0$.  For the density of $F_1$ in $F_0$, let $x \in F_0$; since $\wh R_0$ is a discrete valuation ring with uniformizer $t$, we may write $x = t^{-n} y$ for some $n \in \mbb Z$ and $y \in \wh R_0$.  By the density of $R_0$ in $\wh R_0$, for any $\ell > 0$ there exists $x_0 \in R_0$ such that $x_0 - y \in t^{\ell+n} \wh R_0$. It then follows that $t^{-n}x_0 - x \in t^\ell \wh R_0$. But $t^{-n}x_0 \in F_1$, since the field $F_1$ contains $R_0$.  So $F_1$ is indeed dense in $F_0$, finishing the proof of part~(\ref{global}) of the theorem in this special case.

More generally, if $U_0$ is not necessarily empty, then we proceed as follows (paralleling the proof of \cite{HH:FP}, Theorem~4.10).  Let $U_2' =  U_2 \smallsetminus U_0$, and write $F_2' = F_{U_2'}$ and $F_0' = F_\varnothing$.  Any $g \in G(F_0)$ lies in $G(F_0')$, and so by the above special case we may write $g = g_1g_2$ with $g_1 \in G(F_1) \leq G(F_0)$ and $g_2 \in G(F_2')$.  But also $g_2 = g_1^{-1}g \in G(F_0)$; and $F_2' \cap F_0 = F_2$ by Theorem~4.9 of \cite{HH:FP} since $U_2' \cup U_0 = U_2$.  So actually $g_2 \in G(F_2)$, finishing the proof of part~(\ref{global}).

For part (\ref{local}), take $\m V = \wh R_P$ and $\m W = \wh R_\varnothing$.   Hypothesis~\ref{factorization hyp} holds by \cite{HH:FP}, Lemma~5.3.  Also, the fraction field of $\wh R_P/t\wh R_P$ is $\wh R_\wp/t\wh R_\wp$, by \cite{HH:FP}, Lemma~5.2(d).  (Those results apply since if we let $\wh R = \wh R_{\{P\}}$, then it is straightforward to check that the rings $\wh R_1$, $\wh R_2$, $\wh R_0$ obtained from $\wh R$ in \cite{HH:FP}, Notation~5.1, are the same as the rings $\wh R_P$, $\wh R_\varnothing$, $\wh R_\wp$ here.)
So as in the proof of (\ref{global}), $F_1$ is $t$-adically dense in $F_0$.  Thus Theorem~\ref{factorization} implies the assertion.

Part (\ref{local-global}) is now immediate from the other two parts.
Specifically, by (\ref{local}) we may factor any element $g \in G(F_0)$ as
$g_1'g_2'$ with $g_1' \in G(F_P)$ and $g_2' \in G(F_\varnothing)$.  Taking $U_1
= \{P\}$ and $U_2 = U = X \smallsetminus U_1$, by (\ref{global}) we may then
factor $g_2'$ as $g_1''g_2$ with $g_1'' \in G(F_P)$ (since
$F_{U_1}=F_{\{P\}}\subset F_P$) and $g_2 \in G(F_U)$.  
Writing $g_1=g_1'g_1'' \in G(F_P)$ gives the desired factorization $g=g_1g_2$.
\end{proof}

The above factorization theorem generalizes results of \cite{HH:FP} about $\GL_n$ to rational connected linear algebraic groups $G$. 
Parts~(\ref{global}) and~(\ref{local}) for $\GL_n$ were respectively shown in
Theorem~4.10 and Theorem~5.4 of \cite{HH:FP} (which in the latter case again used the above comment about the rings in \cite{HH:FP}, Notation~5.1).  Also, if $\wh X = \PP^1_T$, we can just take $\m V = \wh R_1$ in case~(\ref{global}) of the above proof, corresponding to choosing $N=0$.

As in Section~6 of \cite{HH:FP}, the second part of the above result will next be extended to curves $\wh X$ that are not necessarily smooth, and to the case where several points are chosen.  To do this, we will choose a finite morphism $\wh X \to \PP^1_T$, so that the function field $F$ of $\wh X$ is a finite extension of the function field $F'$ of $\PP^1_T$.  We will then relate linear algebraic groups over $F$ to linear algebraic groups over $F'$, using the restriction of scalars functor $R_{F/F'}$ that takes affine varieties over $F$ to affine varieties over $F'$, and which is characterized by the functorial isomorphism $\alpha_{X,Z}:\Hom_{F'}(Z,R_{F/F'}(X))   
\to \Hom_F(Z \times_{F'} F,X)$ (see \cite{BLR:Neron}, Section~7.6, Theorem~4,
which does not require separability of $F/F'$).  It will be convenient to use the following notation:

\begin{notn}  \label{P1 notation}
In the context of Notation~\ref{notation}, assume that $f : \wh X \to \mbb P^1_T$ is a finite morphism such that 
$\mc P := f^{-1}(\infty)$ contains all points at which distinct irreducible components of the closed fiber $X \subset \wh X$ meet.  
(Such an $f$ exists by 
\cite{HH:FP}, Proposition 6.6.)  We let $\mc U$ be the collection of
irreducible components $U$ of $f^{-1}(\mbb A_k^1)$, and let $\mc B$ be the collection of all branches $\wp$ at the points of $\mc P$.
\end{notn}

\begin{thm} [(Simultaneous factorization for curves)] \label{general factorization}
Let $\wh X$ be a normal connected projective $T$-curve
and let $f : \wh X \to \mbb P^1_T$ a finite morphism, in the context of Notation~\ref{P1 notation}.
Let $G$ be a rational connected linear algebraic group
over  the function field $F$ of $\wh X$, and suppose that for every branch $\wp \in \mc B$
we are given an element $g_\wp \in G(F_\wp)$. Then we may find an element $g_P \in G(F_P)$ for each $P \in \mc P$, 
and an element $g_U \in G(F_U)$ for each $U \in \mc U$, 
such that for every branch $\wp \in \mc B$ at a point $P \in \mc P$ with $\wp$
lying on the closure of some
$U \in \mc U$, we have $g_\wp = g_P g_U$ with respect
to the natural inclusions $F_P, F_U \to F_\wp$.
\end{thm}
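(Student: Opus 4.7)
The plan is to reduce the simultaneous factorization on the possibly non-smooth $\wh X$ to a single local-global factorization on the smooth curve $\PP^1_T$, by way of Weil restriction of scalars along the finite extension $F/F'$, where $F' = k(\PP^1_T)$ is the function field of $\PP^1_T$. This parallels the approach taken in \cite{HH:FP}, Section~6, for $\GL_n$; the idea is to replace the appeal to the $\GL_n$ factorization there with an appeal to Theorem~\ref{smooth factorization}(\ref{local-global}) applied to the Weil restriction of $G$.

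Concretely, set $H := R_{F/F'}(G)$. Because a dense open subset of $G$ is $F$-isomorphic to a dense open subset of $\A^n_F$ and $R_{F/F'}(\A^n_F) = \A^{nd}_{F'}$ with $d = [F:F']$, the group $H$ is rational; connectedness transfers from $G$ to $H$ upon base change to an algebraic closure of $F'$. Using that $f$ is finite and that $\mc P$ contains every point where distinct components of $X$ meet (so that above $U_0 := \A^1_k \subset \PP^1_k$ the components of $\wh X$ are pairwise disjoint), one obtains natural $F'$-algebra isomorphisms
\[
F \otimes_{F'} F'_\infty \;\cong\; \prod_{P \in \mc P} F_P, \quad F \otimes_{F'} F'_{U_0} \;\cong\; \prod_{U \in \mc U} F_U, \quad F \otimes_{F'} F'_{\wp_\infty} \;\cong\; \prod_{\wp \in \mc B} F_\wp,
\]
where $\wp_\infty$ is the unique branch of $\PP^1_k$ at $\infty$. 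The universal property $H(A) = G(A \otimes_{F'} F)$ of restriction of scalars, combined with the third isomorphism, packages the given family $(g_\wp)_{\wp \in \mc B}$ into a single element $h \in H(F'_{\wp_\infty})$. Applying Theorem~\ref{smooth factorization}(\ref{local-global}) to $\PP^1_T$, the point $\infty$ with its unique branch $\wp_\infty$ and complementary open $U_0$, and the rational connected group $H$, yields $h_\infty \in H(F'_\infty)$ and $h_{U_0} \in H(F'_{U_0})$ with $h_\infty h_{U_0} = h$. Under the first two isomorphisms, $h_\infty$ corresponds to a tuple $(g_P)_{P \in \mc P}$ with $g_P \in G(F_P)$, and $h_{U_0}$ to $(g_U)_{U \in \mc U}$ with $g_U \in G(F_U)$. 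The equality $h_\infty h_{U_0} = h$ in $H(F'_{\wp_\infty})$ decomposes, coordinate by coordinate under the product for $F'_{\wp_\infty}$, into $g_P g_U = g_\wp$ for each branch $\wp$ at $P$ lying on $\ov U$, exactly as required, since the inclusions $F_P, F_U \hookrightarrow F_\wp$ are the ones obtained by projecting the displayed products onto the factor indexed by $\wp$.

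The main obstacle is establishing the three base-change identifications rigorously. Here the choice of $f$ is crucial: because $\mc P$ absorbs all points where irreducible components of $X$ meet, the semilocal ring of $\wh X$ at $\mc P$ becomes a direct product of the local rings $\wh R_P$ upon $t$-adic completion, and analogous product decompositions hold for the $\wh R_U$ (the components $U \in \mc U$ being disjoint over $U_0$) and for the $\wh R_\wp$. The arguments are essentially the same as those used in \cite{HH:FP}, Section~6, for the $\GL_n$ case, but must be verified in our setup. Once these completion/product identifications are in hand, the passage through restriction of scalars is purely formal, and the conclusion follows from Theorem~\ref{smooth factorization}(\ref{local-global}).
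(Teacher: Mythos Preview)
Your proposal is correct and follows essentially the same route as the paper: apply Weil restriction $R_{F/F'}$ to pass from $G$ over $F$ to a rational connected group over the function field $F'$ of $\PP^1_T$, invoke Theorem~\ref{smooth factorization}(\ref{local-global}) on $\PP^1_T$ at $\infty$, and unpack via the product decompositions $F\otimes_{F'}F'_\infty\cong\prod_P F_P$, $F\otimes_{F'}F'_{U_0}\cong\prod_U F_U$, $F\otimes_{F'}F'_{\wp_\infty}\cong\prod_\wp F_\wp$. The paper handles the three tensor-product identifications and their compatibility by citing \cite{HH:FP}, Lemma~6.2(a),(b) directly, which is exactly the content of what you call the ``main obstacle.''
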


To avoid possible confusion, we emphasize that each $g_P$ (resp.~$g_U$) depends only on $P$ (resp.~$U$); but that the identity $g_\wp = g_P g_U$ takes place in $G(F_\wp)$, where we view $g_P$ and $g_U$ as elements of $G(F_\wp)$ via the respective inclusions of $G(F_P)$ and $G(F_U)$ that are induced by the corresponding inclusions of fields.  Thus if $\wp, \wp'$ are each branches at $P$ lying on the closure of $U$ (e.g.\ if $P$ is a nodal point on an irreducible component of $X$), then the products 
$g_\wp = g_P g_U$ and $g_{\wp'} = g_P g_U$ take place over different fields $F_\wp$, $F_{\wp'}$, with respect to different inclusions.

\begin{proof}
Let $F'$ be the function field of $\mbb P^1_T$. Thus $F$, the function field of $\wh X$, is a finite field extension of $F'$ via $f$.  Under Notation~\ref{notation} for $\PP^1_T$, we may consider the rings $\wh R_{\infty}$, $\wh R_{\mbb A_1}$, and $R_{\wp'}$ where
$\wp'$ is the branch at $\infty$ defined by the closed fiber.  
Let $F_1'$, $F_2'$, and $F_0'$ be the corresponding fraction fields.

Let $G' := R_{F/F'}(G)$.  By functoriality of $R_{F/F'}$, the $F'$-variety $G'$ is a linear algebraic group (e.g.\ see \cite{Milne}, Section~1) and it is rational.  By the defining property of $R_{F/F'}$, there is a natural isomorphism $G'(F_0') = G(F_0' \otimes_{F'} F)$.  Since $F_0' \otimes_{F'} F = \prod_{\wp} F_{\wp}$ by \cite{HH:FP}, Lemma~6.2(a), we find:
\[G'(F_0') = G(F_0' \otimes_{F'} F) = G(\prod_{\wp} F_{\wp}) = \prod_{\wp}
G(F_\wp).\]
Similarly,
\[G'(F_1')  = \prod_P G(F_P), \ \ \ \ \ G'(F_2) = \prod_U G(F_U),\]
via $F_1' \otimes_{F'} F = \prod_P F_P$ and 
$F_2' \otimes_{F'} F = \prod_U F_U$ (\cite{HH:FP}, Lemma~6.2(a)).
In particular, we may identify our tuple $(g_\wp) \in \prod_{\wp} G(F_\wp)$
with an element $g_0 \in G'(F_0')$. By Theorem~\ref{smooth factorization}(\ref{local-global}), there exist $g_1 \in G'(F_1')$ and
$g_2 \in G'(F_2')$ such that $g_0 = g_1 g_2$.  Again using the above
identifications, the element $g_1$ corresponds to a tuple $(g_P) \in \prod_{P
\in \mc P} G(F_P)$ and $g_2$ corresponds to a tuple $(g_U) \in \prod_{U \in
\mc U} G(F_U)$.  By \cite{HH:FP}, Lemma~6.2(b), the above isomorphisms on $F_i' \otimes_{F'} F$ (for $i=0,1,2$) are compatible, with respect to the inclusions of $\prod F_P$ and $\prod F_U$ into $\prod F_\wp$, and of $F_1', F_2'$ into $F_0'$.  So by the functoriality of $R_{F/F'}$, the above factorization $g_0 = g_1 g_2 \in G'(F_0')$ yields the desired equality $g_\wp = g_P g_U \in G(F_\wp)$ for each point $P \in \mc P$, each component $U \in \mc U$, and each branch
$\wp$ at $P$ lying on the closure of $U$.
\end{proof}

We continue to adopt Notations~\ref{notation} and~\ref{P1 notation}, concerning  a normal projective $T$-curve $\wh X$ 
with function field $F$ and associated sets $\mc P, \mc U, \mc B$.

In what follows, if a linear algebraic group $G$ acts on a variety $H$ over a field $F$, we will say that $G$ \textit{acts transitively on the points of} $H$ if for every field extension $E$ of $F$ the induced action of the group $G(E)$ on the set $H(E)$ is transitive.  (See also Remark~\ref{homog rk}.)

\begin{thm}[(Local-global principle for homogeneous spaces)]\label{torsor_injective}
Let $G$ be a rational connected linear algebraic group over $F$ 
that acts transitively on the points of an $F$-variety $H$. Then in the context of Notation~\ref{P1 notation},
$H(F) \neq \varnothing$ if and only if $H(F_P) \neq \varnothing$ for each
$P \in \mc P$ and $H(F_U) \neq \varnothing$ for each $U \in \mc U$.
\end{thm}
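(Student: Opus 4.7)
The forward implication is immediate: every $F$-point of $H$ gives an $F_P$-point and an $F_U$-point via the natural inclusions $F \hookrightarrow F_P$ and $F \hookrightarrow F_U$. The substance is the converse, so suppose we are given local points $h_P \in H(F_P)$ for every $P \in \mc P$ and $h_U \in H(F_U)$ for every $U \in \mc U$. My plan is to use the transitive $G$-action to record the failure of these local points to agree on each overlap $F_\wp$ as a family of elements of $G(F_\wp)$, then use the simultaneous factorization theorem to split that family and adjust the local points so that they agree on every overlap, and finally glue via a patching statement.

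Concretely, for each branch $\wp \in \mc B$ at a point $P \in \mc P$ lying on the closure of some $U \in \mc U$, both $h_P$ and $h_U$ become points of $H(F_\wp)$. The hypothesis that $G$ acts transitively on the points of $H$ lets me choose $g_\wp \in G(F_\wp)$ with $g_\wp \cdot h_U = h_P$ in $H(F_\wp)$. Apply Theorem~\ref{general factorization} to the collection $(g_\wp)_{\wp \in \mc B}$ to produce $g_P \in G(F_P)$ for each $P$ and $g_U \in G(F_U)$ for each $U$ with $g_\wp = g_P g_U$ in $G(F_\wp)$ for every branch $\wp$ at $P$ on the closure of $U$. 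Set $h'_P := g_P^{-1} \cdot h_P \in H(F_P)$ and $h'_U := g_U \cdot h_U \in H(F_U)$. Over each $F_\wp$ we then compute
\[
h'_P \;=\; g_P^{-1} \cdot h_P \;=\; g_P^{-1}(g_\wp \cdot h_U) \;=\; (g_P^{-1} g_P g_U)\cdot h_U \;=\; g_U \cdot h_U \;=\; h'_U,
\]
so the adjusted local points are compatible on every overlap.

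The last step is to descend the compatible family $\{h'_P, h'_U\}$ to an $F$-point of $H$. This is where I expect the real work of the argument to be: it is a genuine patching statement for the $F$-variety $H$, not merely for the field $F$. For an affine $F$-variety $Y$ with coordinate ring $A$, the identity $Y(F) = \Hom_{F\text{-alg}}(A,F)$ together with the equalizer property of the base fields arising from the patching framework of \cite{HH:FP} shows that such compatible families of local $Y$-points glue to a unique element of $Y(F)$. To reduce $H$ to this affine setting I would exhibit an $F$-open affine subvariety of $H$ that contains every $h'_P$ and $h'_U$, using that $G$ acts transitively (so all points of $H$ lie in a common $G$-translate of any fixed affine open), possibly after one further round of $G$-translation; alternatively one can appeal directly to the sheaf-theoretic patching equivalence of \cite{HH:FP}. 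Either way, once the gluing statement is in place, the resulting $h \in H(F)$ is the desired global point and the theorem follows.
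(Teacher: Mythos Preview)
Your proposal follows the paper's argument through the main steps: choosing $g_\wp$ by transitivity, factoring via Theorem~\ref{general factorization}, and adjusting to get compatible local points $h'_P, h'_U$. The affine gluing via the equalizer property $F = \varprojlim\{F_P, F_U, F_\wp\}$ (Proposition~6.3 of \cite{HH:FP}) is also exactly what the paper does.

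The one genuine gap is your reduction to the affine case. Your suggestion to use the transitive $G$-action to find a common affine open is not right as stated: the points $h'_P$ live over different fields $F_P$, $F_U$, and a $G$-translate of an affine open is an $F$-subscheme, so transitivity over the various $F_\xi$ does not obviously produce a single $F$-affine open containing all of them. A ``further round of $G$-translation'' would destroy the compatibility you just arranged. The paper's argument is different and does not use the $G$-action at all here: since $h'_P$ and $h'_U$ become equal in $H(F_\wp)$, they have the same image point in the scheme $H$, so any affine open of $H$ containing one contains the other. Because the closed fiber of $\wh X$ is connected, every pair of indices in $\mc P \cup \mc U \cup \mc B$ is linked by a chain of such triples $(P,U,\wp)$, and hence \emph{all} the $h'_\xi$ land in a single scheme-theoretic point of $H$. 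Any affine open neighborhood of that point then does the job. (If $H$ is quasi-projective one can instead just observe that finitely many points lie in a common affine open.) Once you replace your translate argument with this connectedness observation, your proof is complete and matches the paper's.
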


\begin{proof}
If $H(F)$ is non-empty, then so are each $H(F_P)$ and $H(F_U)$, since $F$ is contained in $F_P$ and $F_U$. 

For the converse, pick a point $h_P \in H(F_P)$ for each $P \in \mc P$
and a point $h_U \in H(F_U)$ for each $U \in \mc U$.  
For each $\wp \in \mc B$, as observed in Notation~\ref{notation} there is a unique point $P \in \mc P$ and a unique irreducible component $X_0$ of $X$ such that $\wp$ is a branch at $P$ that lies on $X_0$.  The component $X_0$ is the closure of a unique $U \in \mc U$, which is thus also determined by $\wp$.  
Here we can view $h_P$ and $h_U$ as points of $H(F_\wp)$ via the inclusions of 
$F_P$ and $F_U$ into $F_\wp$.  Since $G$ acts transitively on the points of $H$, there is an element $g_\wp \in G(F_\wp)$ such that $g_\wp(h_U) = h_P$ in $H(F_\wp)$.   Since $G$ is rational and we are in the situation of Notation~\ref{P1 notation},
Theorem~\ref{general factorization} implies that there is a collection of group elements $g_P \in G(F_P)$ for all $P \in \mc P$ and
$g_U \in G(F_U)$ for all $U \in \mc U$, 
such that for every branch $\wp$ at $P$ on the closure of $U$ we have $g_\wp = g_Pg_U$.  
Let $h_P' = g_P^{-1}(h_P) \in H(F_P)$ and $h_U' = g_U(h_U) \in H(F_U)$.  Thus if $P, U, \wp$ are a triple as above, then $h_P'$ and $h_U'$ become identified with the same element $h_\wp' \in H(F_\wp)$ under the inclusions of $H(F_P)$ and $H(F_U)$ into $H(F_\wp)$. 
(Here $h_\wp'$ depends only on $\wp$ since $\wp$ determines $P$ and $U$.)  

We claim that there is an affine Zariski open subset $\Spec A \subseteq H$ that contains the points $h_P'$, $h_U', h_\wp'$ for all $P,U,\wp$.  This is clear if $H$ is quasi-projective, since this set of points is finite.  For a more general variety $H$, observe that if $\xi_1, \xi_2 \in \mc P \cup \mc U \cup \mc B$ are related by being members of a common triple $P, U, \wp$ as above, then any affine open subset of $H$ that contains $h_{\xi_1}'$ must also contain $h_{\xi_2}'$ (since they define the same point in $H(F_\wp)$).  But since the closed fiber of the curve $\wh X$ is connected, {\it any} two elements $\xi_1, \xi_2 \in \mc P \cup \mc U \cup \mc B$ are in the transitive closure of this relation.  This proves the claim.

Let $\phi_P : A \to F_P$, $\phi_U : A \to F_U$, and $\phi_\wp: A \to F_\wp$ be the homomorphisms corresponding to the points $h_P' \in H(F_P)$, $h_U' \in H(F_U)$, and $h_\wp' \in H(F_\wp)$.  
Thus if $\wp$ is a branch at $P$ on the closure of $U$, the maps $\phi_P$ and $\phi_U$ each induce the homomorphism $\phi_\wp:A \to F_\wp$ via the inclusions $F_P, F_U \hookrightarrow F_\wp$.  
So all the maps $\phi_P$, $\phi_U$, and $\phi_\wp$ together define a homomorphism $\phi$ from $A$ to the inverse limit of the finite inverse system consisting of the fields $F_P$ (for $P \in \mc P$), $F_U$ (for $U \in \mc U$), and $F_\wp$ (for $\wp \in \mc B$).  But by \cite{HH:FP}, Proposition~6.3, this inverse limit is just $F$, with respect to the inclusions of $F$ into the fields $F_P, F_U, F_\wp$.
The $F$-homomorphism $\phi : A \to F$ then defines an $F$-rational point on $H$; i.e.\ $H(F) \neq \varnothing$ as asserted.
\end{proof}

\begin{cor}
Let $G_1$ and $G_2$ be linear algebraic groups such that $G_1\times G_2$ is a rational connected linear algebraic group.
Then the assertions of Theorems~\ref{factorization}, \ref{smooth factorization}, \ref{general factorization} and~\ref{torsor_injective} 
hold for $G_1$ and $G_2$.
\end{cor}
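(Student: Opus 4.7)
The plan is to reduce each of the four cited statements for $G_1$ and $G_2$ to the version already proved for the rational connected linear algebraic group $G_1 \times G_2$, by inserting an identity on the other factor (for the factorization assertions) and pulling back the action along the projection (for the local-global principle).

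First I would handle the factorization assertions of Theorems~\ref{factorization}, \ref{smooth factorization}, and~\ref{general factorization}. Fix $i \in \{1,2\}$ and let $j$ denote the other index. Given an element $g \in G_i(F_0)$---or, in the setting of Theorem~\ref{general factorization}, a collection $\{g_\wp\}_{\wp \in \mc B}$ with $g_\wp \in G_i(F_\wp)$---form the lift $\tilde g$ over the same field by inserting the identity of $G_j$ in the other coordinate. Since $G_1 \times G_2$ is rational and connected by hypothesis, applying the relevant theorem to it yields a factorization of $\tilde g$ over the specified fields. Composing with the $F$-morphism of algebraic groups $\pi_i : G_1 \times G_2 \to G_i$, which commutes with every base change to the fields $F_0, F_1, F_2, F_P, F_U, F_\wp$, produces the required factorization in $G_i$.

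Next I would treat the local-global principle of Theorem~\ref{torsor_injective}. Suppose $G_i$ acts transitively on an $F$-variety $H$. Pulling back this action along $\pi_i : G_1 \times G_2 \to G_i$ defines an action of $G_1 \times G_2$ on $H$ in which $G_j$ acts trivially. This action remains transitive on the points of $H$: for any field extension $E/F$, the subgroup $G_i(E) \subseteq (G_1 \times G_2)(E)$ already acts transitively on $H(E)$. Applying Theorem~\ref{torsor_injective} to $G_1 \times G_2$ acting on $H$ then yields the equivalence between $H(F) \neq \varnothing$ and the nonemptiness of all the $H(F_P)$ and $H(F_U)$.

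The only bookkeeping required is to verify that embeddings, projections, and identifications of fields are compatible with the base changes that appear in the cited theorems; this is automatic since every morphism involved is defined over $F$. I do not foresee any serious obstacle beyond keeping this compatibility straight.
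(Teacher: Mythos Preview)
Your proposal is correct and follows essentially the same approach as the paper: lift to $G_1 \times G_2$ (the paper says ``choose a preimage,'' you specify inserting the identity on the other factor), factor there using the already-proved theorems, then project back; and for Theorem~\ref{torsor_injective}, observe that $G_1 \times G_2$ acts transitively on any $H$ on which $G_i$ does.
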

\begin{proof}
By symmetry, it suffices to prove the statement for $G_1$. 
Theorems~\ref{factorization}, \ref{smooth factorization} and~\ref{general factorization} hold for $G_1$ by choosing a preimage in $G_1\times G_2$ of each given point of $G_1$; factoring in $G_1\times G_2$ by the respective theorems for that rational connected group; and then projecting the factorization to $G_1$.  Theorem~\ref{torsor_injective} holds for $G_1$ because it holds for $G_1\times G_2$ and because $G_1\times G_2$ acts 
transitively on the points of any $F$-variety for which $G_1$ does.
\end{proof}

\begin{rem} \label{homog rk}
In the special case that $G$ is a (connected) reductive linear algebraic group over $F$ and $H$ is a projective $F$-variety, 
the transitivity condition in the above theorem 
simplifies.  Specifically, it is equivalent to the \textit{a priori} weaker condition that the 
group $G(\bar F)$ acts transitively (in the classical sense) on the set $H(\bar F)$, where $\bar F$ is an algebraic closure of $F$.  

To see this, note that for any field extension $E$ of $F$, the stabilizer of an $E$-point of $H$ is a parabolic subgroup of $G$, 
by the projectivity of $H$.  By the hypotheses on $G$, parabolic subgroups are self-normalizing (\cite{Borel}, Theorem~11.16), 
hence distinct $E$-points have distinct stabilizers; and two such subgroups are conjugate under $G(E)$ if they are conjugate under 
$G(\bar E)$ (\cite{Borel}, Theorem~20.9(iii)).  But the transitivity of $G(\bar F)$ on $H(\bar F)$ yields the same over $\bar E$, 
implying the conjugacy of the stabilizers.  Therefore the stabilizers of any two $E$-points of $H$ are conjugate under $G(E)$, 
and the points are then in the same $G(E)$-orbit.
These extra hypotheses on $G$ and $H$ are in fact satisfied in the situations below where we apply the above theorem (see the 
proofs of Theorems~\ref{iso_patch} and~\ref{index hasse}); but we will not need to use this fact.
\end{rem}

\section{Quadratic forms} \label{quadratic}

In this section we prove our results on quadratic forms.  We do this by reducing to a local problem, 
using the local-global principle in Theorem~\ref{torsor_injective}. For generalities concerning quadratic forms, 
we refer the reader to \cite{Lam} and~\cite{Grove}. 

Let $F$ be a field of characteristic unequal to $2$.
Recall that by the Witt decomposition theorem (\cite{Lam}, I.4.1), every quadratic form $q$ over $F$ 
may be decomposed as an orthogonal sum 
$q_t \perp q_a \perp q_h$, where $q_t$ is totally isotropic, $q_a$ is anisotropic, and $q_h$ is hyperbolic (or zero).  All factors 
are uniquely determined up to isometry. Here $q_r := q_a \perp q_h$ is regular (i.e.\ non-degenerate); and the $q_t$ factor does not occur if $q$ is regular.  The \textit{Witt index} $i_W(q)$ of $q$ is  $\frac12 \dim q_h$; if $q$ is regular this is the same as the dimension of any maximal 
totally isotropic subspace (\cite{Grove}, pp.~41-42).  Since
$\operatorname{char}(F)\neq 2$, every quadratic form over $F$ is isometric to
a diagonal form $a_1x_1^2+\cdots +a_nx_n^2$, which is denoted by
$\left<a_1,\ldots,a_n\right>$. If $E$ is a field containing $F$, then $q_E$
denotes the form $q$ viewed as a quadratic form over~$E$.

\begin{rem}\label{orthogonal rational}
If $q$ is a regular quadratic form over a field $F$ of characteristic unequal
to~$2$, then the special orthogonal 
group $\operatorname{SO}(q)$ of isometries of $q$ of 
determinant~$1$ is a rational connected linear algebraic group. 
More generally, let $A$ be a finite dimensional central simple 
$F$-algebra with an
involution~$\iota$ (i.e.\ an anti-automorphism of order $2$), and let $G  = \{a \in A^\times \,|\, \iota(a)=a^{-1}\}$.  
Then the classical
Cayley map $a \mapsto (1-a)(1+a)^{-1}$ defines a birational isomorphism from the
connected component $G^\circ$ to the set of $\iota$-skew symmetric elements, which
is an $F$-linear subspace of $A$ (\cite{BofInv}, p.~201,
Exercise~9); thus $G^\circ$ is rational.  \end{rem}

\begin{thm} \label{iso_patch}
In the context of Notation~\ref{P1 notation}, suppose $q$ is a quadratic form over $F$ of dimension unequal to two, such that $q_{F_\xi}$ is isotropic for each $\xi \in \mc P \cup \mc U$. Then $q$ is isotropic.
\end{thm}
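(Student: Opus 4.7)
The plan is to realize the isotropy of $q$ over $F$ as the existence of an $F$-rational point on a homogeneous space for a rational connected linear algebraic group, so that Theorem~\ref{torsor_injective} applies directly.

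First, I would make a few routine reductions. Decompose $q = q_t \perp q_r$ with $q_t$ totally isotropic (radical) and $q_r$ regular. If $q_t\neq 0$ then $q$ is already isotropic over $F$, so we may assume $q=q_r$ is regular; in that case $q_{F_\xi}$ being isotropic is the same as $q_r{}_{F_\xi}$ being isotropic. For $\dim q\in\{0,1\}$ a regular form is anisotropic, so the hypothesis is vacuous; the case $\dim q=2$ is excluded. Hence we may assume $q$ is regular of dimension $n\ge 3$.

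Next, set $G=\SOrth(q)$ and let $H\subset\PP^{n-1}_F$ be the projective quadric $\{q=0\}$. By Remark~\ref{orthogonal rational}, $G$ is a rational connected linear algebraic group over $F$. For any field extension $E/F$, $H(E)\neq\varnothing$ if and only if $q_E$ is isotropic, and $G$ acts naturally on $H$. By Witt's extension theorem, $\Orth(q)(\bar F)$ acts transitively on nonzero isotropic vectors in $\bar F^n$; since $n\ge 3$, we can compose with a reflection supported in the orthogonal complement of a chosen isotropic vector to conclude that even $\SOrth(q)(\bar F)$ acts transitively on $H(\bar F)$. Because $G$ is connected reductive and $H$ is projective, Remark~\ref{homog rk} upgrades this to transitivity of $G(E)$ on $H(E)$ for every field extension $E/F$, which is the transitivity hypothesis required by Theorem~\ref{torsor_injective}.

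Given this setup, the hypothesis of the theorem translates to $H(F_\xi)\neq\varnothing$ for every $\xi\in\mc P\cup\mc U$, and Theorem~\ref{torsor_injective} (applied to this $G$ and $H$ in the context of Notation~\ref{P1 notation}) yields $H(F)\neq\varnothing$, i.e.\ $q$ is isotropic over $F$.

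The main step to be careful about is verifying the geometric transitivity of $\SOrth(q)$ on the projective quadric in the borderline dimensions $n=3,4$ (where $\SOrth(q)$ has exceptional structure, e.g.\ $\operatorname{PGL}_2$ or $\operatorname{PGL}_2\times\operatorname{PGL}_2$ over $\bar F$); the higher-dimensional cases are a standard consequence of Witt's theorem. Once transitivity is in hand, the result is a direct application of the local-global principle, and the exclusion of dimension $2$ is clearly needed since the projective quadric consists of only two points, on which $\SOrth(q)\cong\Gm$ acts trivially.
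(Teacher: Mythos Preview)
Your proposal is correct and follows essentially the same approach as the paper: reduce to a regular form of dimension $n\ge 3$, take $G=\SOrth(q)$ acting on the projective quadric $H$, verify rationality via Remark~\ref{orthogonal rational} and transitivity on points, and then apply Theorem~\ref{torsor_injective}. The only difference is in how transitivity is established: the paper proves directly, for every extension $E/F$, that $\Orth(q)(E)$ acts transitively on $H(E)$ via Witt's extension theorem and then passes to $\SOrth(q)$ using that for $n\ge 3$ the stabilizer of an isotropic line contains a reflection (equivalently, $H$ is connected); you instead check transitivity over $\bar F$ and invoke Remark~\ref{homog rk}. The paper explicitly notes your route as a valid alternative immediately after its proof, so no further justification is needed, and your reflection-in-$v^\perp$ argument already handles the cases $n=3,4$ uniformly.
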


\begin{proof}
If the dimension of $q$ is one, then each $q_{F_\xi}$ is totally isotropic, and hence not regular.  Thus neither is $q$, and the conclusion follows in this case.

Now suppose $n := \dim q \ge 3$.  By Witt decomposition, we may write $q = q_t \perp q_r$, where $q_r$ is regular and $q_t$ is totally isotropic. If $q_t \neq 0$ then $q$ is isotropic and there is nothing to show.  Therefore, we may assume that $q$ (and hence each $q_{F_\xi}$) is regular.  

Let $H$ be the projective quadric hypersurface defined by $q$. 
Observe that $\Orth(q)$ acts transitively on the points of the $F$-variety
$H$ (see the definition before Theorem~\ref{torsor_injective}).  To see this, let $L$ be a field extension of $F$, and let 
$\xi_1, \xi_2 \in H(L)$.  These points correspond to lines $W_1, W_2$ through the origin in $\mbb A_L^n$ that are totally isotropic with respect to $q_L$;
and hence any isomorphism $f:W_1 \to W_2$ as $L$-vector spaces is an isometry.  By Witt's extension theorem (\cite{Grove}, Theorem~5.2), 
such an $f$ extends to an isometry of $\mbb A_L^n$ taking $W_1$ to $W_2$.  That is, some element of the orthogonal group 
$\Orth(q)(L)$ carries $\xi_1$ to $\xi_2$. Hence $\Orth(q)$ acts transitively on the points of $H$.

Since $n \ge 3$, the quadric hypersurface $H$ is connected.  Therefore, the special orthogonal group $\SOrth(q)$, which is the connected component of $\Orth(q)$, also acts transitively on the points of 
$H$.  By Remark~\ref{orthogonal rational}, the group $\SOrth(q)$ is rational.  Since $\SOrth(q)$ acts transitively on the points of
$H$, Theorem~\ref{torsor_injective} implies that $H(F)$ is non-empty provided that each $H(F_\xi)$ is.  That is, if each $q_{F_\xi}$ is isotropic then so is $q$.
\end{proof}

We note that in the above proof, the transitivity of $\SOrth(q)$ on the points of $H$ can also be proven by applying Remark~\ref{homog rk}.  
Namely, $\SOrth(q)$ is connected and reductive, and the projective variety $H$ is homogeneous for that group over $\bar F$ 
(i.e., $\SOrth(q)(\bar F)$ acts transitively on $H(\bar F)$).  So Remark~\ref{homog rk} implies that $\SOrth(q)$ acts 
transitively on the points of $H$.

The above result can be regarded as a Hasse-Minkowski theorem for quadratic forms over the function field of a curve defined 
over a complete discretely valued field.  As a consequence, we obtain the
following:

\begin{cor} \label{witt_index}
In the context of Notation~\ref{P1 notation}, suppose $q$ is a regular
quadratic form over $F$.
Then $i_W(q)\in \{\operatorname{min}(i_W(q_{F_\xi})),
\operatorname{min}(i_W(q_{F_\xi}))-1\}$, where the minimum is taken over all
$\xi \in  \mc P \cup \mc U$.
Moreover the second case can occur only if all $q_{F_\xi}$ are hyperbolic.\end{cor}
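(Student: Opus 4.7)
The plan is to reduce the statement, via the Witt decomposition, to a claim about anisotropic forms and then invoke Theorem~\ref{iso_patch}. To begin, the inequality $i_W(q) \le i_W(q_{F_\xi})$ holds for each $\xi$, because any totally isotropic $F$-subspace of the quadratic space of $q$ remains totally isotropic after base change to $F_\xi$. In particular $i_W(q) \le \min_\xi i_W(q_{F_\xi})$, so what remains is the upper bound $\min_\xi i_W(q_{F_\xi}) \le i_W(q) + 1$ together with the ``moreover'' clause.

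Next, I would write the Witt decomposition $q = q_a \perp q_h$, with $q_a$ anisotropic and $q_h$ hyperbolic. Since $(q_h)_{F_\xi}$ remains hyperbolic of the same dimension, a direct Witt-theoretic computation gives
\[i_W(q_{F_\xi}) = i_W(q) + i_W\bigl((q_a)_{F_\xi}\bigr)\]
for every $\xi \in \mc P \cup \mc U$. The corollary is thus equivalent to the two assertions: (a) $m := \min_\xi i_W\bigl((q_a)_{F_\xi}\bigr) \in \{0,1\}$; and (b) if $m = 1$, then every $q_{F_\xi}$ is hyperbolic.

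For (a), I would argue by contradiction: suppose $m \ge 2$, so that every $(q_a)_{F_\xi}$ is isotropic. If $\dim q_a \neq 2$, then Theorem~\ref{iso_patch} applied to $q_a$ itself forces $q_a$ to be isotropic, contradicting its anisotropy; if $\dim q_a \le 2$, then trivially $i_W\bigl((q_a)_{F_\xi}\bigr) \le 1$ for all $\xi$, contradicting $m \ge 2$. For (b), when $m = 1$ every $(q_a)_{F_\xi}$ is isotropic, and the same dichotomy rules out $\dim q_a \ne 2$, while $\dim q_a \le 1$ is impossible since an anisotropic form of dimension $\le 1$ remains anisotropic over every extension. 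Hence $\dim q_a = 2$; but a two-dimensional regular isotropic form is automatically hyperbolic, so each $(q_a)_{F_\xi}$ is hyperbolic, and therefore so is each $q_{F_\xi} = (q_a)_{F_\xi} \perp (q_h)_{F_\xi}$.

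The only real subtlety is the dimension-two exception built into Theorem~\ref{iso_patch}: it is precisely this case that both prevents ruling out $m = 1$ outright and produces the possible $-1$ discrepancy recorded in the statement. Everything else is routine bookkeeping with the Witt decomposition, and no further patching input is needed beyond Theorem~\ref{iso_patch}.
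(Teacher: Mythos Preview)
Your proof is correct and rests on the same single input as the paper, Theorem~\ref{iso_patch}; the organization, however, differs. The paper argues by induction on $\dim q$: whenever the minimum local Witt index is positive and $\dim q \ge 3$, Theorem~\ref{iso_patch} forces $q$ to be isotropic, one splits off a hyperbolic plane, and the statement reduces to dimension $n-2$, with the base cases $n=1,2$ handled directly. You instead take the Witt decomposition $q = q_a \perp q_h$ once at the outset, reduce immediately to the anisotropic kernel $q_a$ via the identity $i_W(q_{F_\xi}) = i_W(q) + i_W((q_a)_{F_\xi})$, and apply Theorem~\ref{iso_patch} a single time to $q_a$. Your route is a bit more economical, avoiding the induction entirely; the paper's induction, on the other hand, makes transparent how the dimension-two exception in Theorem~\ref{iso_patch} propagates through the argument. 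One small cosmetic point: in part~(a) your case split ``$\dim q_a \ne 2$'' versus ``$\dim q_a \le 2$'' is redundant, since $m \ge 2$ already forces $\dim q_a \ge 4$; the argument is unaffected.
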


\begin{proof}
We proceed by induction.  If the dimension of $q$ is one, then $q$ and 
$q_{F_\xi}$ cannot contain a hyperbolic plane, and so the Witt indices are all $0$.  If the dimension of $q$ is two, and if any $q_{F_\xi}$ is anisotropic, then so is $q$; thus $i_W(q)=0=\operatorname{min}(i_W(q_{F_\xi}))$. The remaining two-dimensional case is when all $q_{F_\xi}$ are hyperbolic, in which case $\operatorname{min}(i_W(q_{F_\xi})) = 1$ and 
$i_W(q)$ is equal to $1$  or $0$ depending on whether or not $q$ is hyperbolic.

For the inductive step, consider a form $q$ of dimension $n \ge 3$ and assume
that the assertion holds for forms of dimension $n-2$.  We may suppose that
$\operatorname{min}(i_W(q_{F_\xi}))$ is nonzero (otherwise there is nothing to
show). In particular, each $q_{F_\xi}$ is isotropic. Then by
Theorem~\ref{iso_patch}, $q$ is isotropic.
By Witt decomposition, this
implies that $q\simeq h\perp q'$ for some $q'$ and a hyperbolic plane $h$.
Hence $i_W(q')=i_W(q)-1$ and $\dim(q')=n-2$. Moreover $q_{F_\xi}\simeq
h_{F_\xi}\perp q'_{F_\xi}$ for all $\xi$.
Thus $q_{F_\xi}$ is hyperbolic if and only if $q'_{F_\xi}$ is, and 
$i_W(q'_{F_\xi})=i_W(q_{F_\xi})-1$.  The conclusion of the corollary thus holds for $q'$ and hence for $q$.
\end{proof}

We thank J.-L.~Colliot-Th\'el\`ene for bringing to our attention the following example, which shows that Theorem~\ref{iso_patch} does not in general hold in dimension two, and that the second case of Corollary~\ref{witt_index} can occur for forms that are hyperbolic over the fields $F_\xi$.

\begin{ex}
Let $T$ be a complete discrete valuation ring with uniformizer $t$,
fraction field $K$, and residue field $k$ of characteristic unequal to
$2$.  Consider the field $F = K(x)[y]/(y^2-x(x-1)(1-xt))$, which is a degree two extension of the function field $K(x)$ of $\PP^1_T$.  The normalization $\wh X$ of $\PP^1_T$ in $F$ is a normal projective $T$-curve that is equipped with a degree two finite morphism $f:\wh X \to \PP^1_T$.
The closed fiber $X$ of $\wh X$ is a rational $k$-curve with a single node
$P$, which is the unique point lying over the point at infinity on
$\PP^1_k$; and the complement $U$ of $P$ in $X$ is the inverse image of
the affine $k$-line.  The general fiber of $\wh X$ is an elliptic curve
over $K$; this is a Tate curve in the case that $K$ is a $p$-adic field.
  With $a=x(x-1)$, let $\wh X' \to \wh X$ be the unramified degree two
cover with function field $F' := F[\sqrt{a}]$, and let $q$ be the
quadratic form $\left< a,-1 \right>$ over $F$.  Then $q$ is anisotropic
over $F$ because $a$ is not a square in $F$.  But $\wh X' \to \wh X$ is
split over $P$ and $U$ and hence over the spectra of $\wh R_P$ and $\wh
R_U$.  Hence the two-dimensional form $q$ becomes isotropic (and thus
hyperbolic) over $F_P$ and over $F_U$.  This shows that
Theorem~\ref{iso_patch} does not always hold for forms of dimension two.
  Moreover, the Witt indices $i_W(q_P)$ and $i_W(q_U)$ are equal to one,
whereas that of the anisotropic form $q$ is equal to zero.  Thus
$i_W(q)$ can equal $\operatorname{min}(i_W(q_{F_\xi}))-1$ in the locally
hyperbolic case of Corollary~\ref{witt_index}.
\end{ex}

Next, we consider a variant on Hensel's Lemma.

\begin{lem} \label{affine_hensel}
Let $R$ be a ring and $I$ an ideal such that $R$ is $I$-adically
complete.  Let $X$ be an affine $R$-scheme with structure morphism $\phi
: X \to \Spec R$. Let $n \ge 0$.  If $s_n : \Spec R/I^n \to X \times_
R (R/I^n)$ is a section of $\phi_n := \phi \times_R (R/I^n)$ and its image lies
in the smooth locus of $\phi$, then $s_n$ may be extended to a section of $\phi$.
\end{lem}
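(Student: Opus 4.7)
The plan is to lift $s_n$ inductively to sections modulo $I^{n+1}, I^{n+2}, \ldots$, producing a compatible system of ring homomorphisms $A \to R/I^m$ (where $X = \Spec A$), and then assemble these via $I$-adic completeness of $R$ into a genuine homomorphism $A \to R$ corresponding to a section of $\phi$ that extends $s_n$. The key is that each step from $\Spec R/I^m$ to $\Spec R/I^{m+1}$ is a square-zero thickening (once $m \ge 1$), so the lifting problem is governed by formal smoothness along the image of the partial section.

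Concretely, suppose $\sigma_m \colon A \to R/I^m$ has already been constructed, and I wish to produce $\sigma_{m+1} \colon A \to R/I^{m+1}$ reducing to $\sigma_m$. The underlying topological space of $\Spec R/I^m$ equals $V(I)$ for every $m \ge 1$, so the set-theoretic image of $\sigma_m$ in $X$ coincides with that of $\sigma_n$ and lies inside $X^{\mathrm{sm}}$. Picking a presentation $A = R[x_1, \ldots, x_N]/\mf a$ and arbitrary lifts $\tilde a_i \in R/I^{m+1}$ of $\sigma_m(x_i)$, the Taylor expansion
\[
f(\tilde a + \epsilon) \equiv f(\tilde a) + \sum_{i=1}^{N} \tfrac{\partial f}{\partial x_i}(\sigma_m(x))\,\epsilon_i \pmod{I^{m+1}}, \qquad f \in \mf a,
\]
is valid because $J := I^m/I^{m+1}$ squares to zero in $R/I^{m+1}$, so constructing $\sigma_{m+1}$ reduces to solving an $R/I$-linear system for correction terms $\epsilon = (\epsilon_i) \in J^N$. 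Smoothness of $\phi$ along the image of $s_n$, via the Jacobian criterion, forces the relevant Jacobian to have the correct rank modulo $I$, and hence the system is solvable; equivalently, the obstruction class lives in
\[
H^1\bigl(\Spec R/I^m,\; s_m^*\, \mc T_{X/R} \otimes_{R/I^m} J\bigr),
\]
which vanishes because the base is affine and the coefficient sheaf is quasi-coherent (in fact locally free in a neighborhood of the image inside $X^{\mathrm{sm}}$).

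The compatible family $(\sigma_m)_{m \ge n}$ then induces a ring homomorphism $\sigma \colon A \to \varprojlim_m R/I^m = R$ by the completeness hypothesis, yielding the desired extension. The principal technical point I expect is the inductive lifting step, specifically verifying that the obstruction vanishes despite smoothness being assumed only along the image of $s_n$ rather than globally on $X$; this is handled precisely by the observation above that the image set is the same at every level $m$ and stays inside the open smooth locus, so the relevant tangent sheaf is locally free there and its first cohomology over the affine base vanishes. Once this is in place, the induction and the passage to the limit are routine.
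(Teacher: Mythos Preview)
Your proposal is correct and follows essentially the same strategy as the paper: inductively lift the section through successive thickenings $\Spec R/I^m \hookrightarrow \Spec R/I^{m+1}$ using smoothness along the image, then pass to the inverse limit via $I$-adic completeness. The paper streamlines the inductive step by restricting to the open smooth locus $X' \subseteq X$ and invoking directly that smooth implies formally smooth (citing \cite{EGAIV4}, 17.1.1 and 17.3.1), whereas you unpack this via the Jacobian criterion and the vanishing of the obstruction class in $H^1$ on an affine base; these are equivalent justifications of the same lifting step.
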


\begin{proof}
Write $X = \Spec S$ for some $R$-algebra $S$, with structure map $i:R \to S$. Let $X' \subseteq X$ be the smooth locus of $X$ over $R$, and let $\phi'$ be the restriction of $\phi$ to $X'$.  Since $X'$ is smooth over $R$, it is formally smooth over $R$ (see \cite{EGAIV4}, Definition~17.3.1).  That is, for any $m \ge 1$, any section $s_m: \Spec R/I^m \to X' \times_R (R/I^m)$ of $\phi_m':= \phi' \times_R (R/I^m)$ lifts to a section 
$s_{m+1}: \Spec R/I^{m+1} \to X' \times_R (R/I^{m+1})$ of $\phi_{m+1}':= \phi \times_R (R/I^{m+1})$ (see~\cite{EGAIV4}, Definition~17.1.1).  Hence by induction, there is a compatible system of sections
$s_m: \Spec R/I^m \to X' \times_R (R/I^m) \subseteq X \times_R (R/I^m)$ of the maps $\phi_m$,  for $m \ge n$, with each $s_m$ in particular lifting $s_n$.  Here the morphism $s_m: \Spec R/I^m \to X \times_R (R/I^m)$ corresponds to a retract $\pi_m:S/I^mS \to R/I^m$
of the mod $I^m$-reduction $i_m:R/I^m \to S/I^mS$ of $i$ (i.e., $\pi_m
\circ i_m$ is the identity on $R/I^m$).  Writing $p_m:S \to S/I^mS$ for the reduction modulo $I^mS$, we obtain a compatible system of maps
$\pi_m \circ p_m:S \to R/I^m$, which in turn defines a map $\pi:S \to R$ given by their inverse limit (using that $R$ is $I$-adically complete).  The map $\pi$ is then a retract of $i$ and thus corresponds to a section of $\phi$ that extends $s_n$.
\end{proof}

In fact, the above lemma holds even without the assumption that $X$ is affine over $\Spec R$, by using Corollaire~5.1.8 and 
Th\'eor\`eme~5.4.1 of \cite{EGAIII} in place of the inverse system argument at
the end of the above proof.  By citing the above lemma in that more general
form, one could use the projective hypersurface $H$ of Theorem~\ref{iso_patch}
rather than the associated affine quadric $Q$ in the proof of
Proposition~\ref{u-inv patches} below, and one would not need to choose an
affine open subset in the proof of Proposition~\ref{csa patches}. The proof
above, however, is more elementary.

\medskip

In the context of Notation~\ref{P1 notation}, assume that the residue field $k$ of $T$ has characteristic unequal to $2$.  In particular, $F$ does not have characteristic $2$.  As a consequence, any quadratic form $q$ over $F$ may be diagonalized.  

\begin{dfn} \label{discrimdef}
If $q = \left<a_1, \ldots, a_n\right>$ is a regular diagonal quadratic form over a field $F$ as above, its \textit{singular divisor} on $\wh X$ is the sum of those prime divisors on $\wh X$ at which the divisor of some $a_i$ (viewed as a rational function on $\wh X$) has odd multiplicity.  
\end{dfn}

Observe in the above definition that a change of variables $x_i' = c_ix_i$
with $c_i \in F^\times$ does not affect the singular divisor, since each $a_i$
is then multiplied by a square. In particular, in the context of
Notation~\ref{P1 notation}, 
for every $\xi \in \mc U \cup \mc P$, there is such a change of variables
taking $q$ to another 
diagonal form $q' = \left<a_1', \ldots, a_n'\right>$ with each $a_i' \in \wh
R_\xi \cap F$.   
Here $q'$ is isometric to the form $q$, and has the same singular divisor.  

We recall the following standard result:

\begin{lem} \label{resolution}
Let $S$ be a two-dimensional excellent normal scheme.  Then there is a birational morphism $\pi:S' \to S$ such that $S'$ is regular.  Moreover if $D$ is a divisor on $S$ then we may choose $\pi:S' \to S$ such that the support of $\pi^{-1}(D)$ has only normal crossings.
\end{lem}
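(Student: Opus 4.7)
The plan is to split the statement into two steps: first obtain a resolution of the singularities of $S$, then refine it to make the preimage of $D$ a normal crossings divisor.

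For the first step, I would invoke Lipman's theorem on desingularization of two-dimensional excellent schemes (Lipman, \emph{Desingularization of two-dimensional schemes}, Ann.\ of Math.\ 107 (1978)). Since $S$ is two-dimensional, excellent, and normal (in particular reduced and quasi-excellent), this theorem provides a proper birational morphism $\pi_0 : S_0 \to S$ with $S_0$ regular. This is the deep input and will be the main ``obstacle'' in the sense that everything else reduces to classical surface theory; we simply cite it.

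For the second step, set $D_0 = \pi_0^{-1}(D)$, viewed as a closed subscheme (or divisor) on the regular surface $S_0$. I would then apply the classical embedded resolution procedure for curves on a regular two-dimensional scheme: iteratively blow up the (finitely many) closed points of $S_0$ at which the reduced total transform of $D_0$ fails to have normal crossings. At each stage the blow-up of a regular surface at a closed point is again a regular surface, and standard invariants (e.g.\ the multiplicities of the components together with a suitable measure of their mutual tangency at bad points) strictly decrease, so the process terminates after finitely many blow-ups. This is the Zariski--Abhyankar embedded resolution for curves on regular excellent surfaces; a convenient reference is again Lipman's paper, or the treatment in Shafarevich's lectures on minimal models for surfaces.

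Composing these two procedures, let $S' \to S_0$ be the resulting sequence of point blow-ups and set $\pi : S' \to S$ to be the composition with $\pi_0$. Then $S'$ is regular, $\pi$ is proper and birational (being a composition of a resolution with point blow-ups of a regular surface), and by construction the support of $\pi^{-1}(D)$ is a normal crossings divisor on $S'$. This gives the desired conclusion.
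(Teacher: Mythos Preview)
Your proposal is correct and follows essentially the same two-step approach as the paper: first cite resolution of singularities for excellent surfaces (the paper points to Abhyankar and Lipman's Arcata survey rather than the 1978 Annals paper, but the content is the same), then invoke embedded resolution on the resulting regular surface (the paper cites \cite{Lip}, p.~193, for this). Your more explicit description of the blow-up procedure in the second step is fine but not needed, since the paper is content to treat the whole lemma as a citation.
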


\begin{proof}
The first part of the assertion is resolution of singularities for surfaces; see \cite{Abh} or \cite{Lip}.  If $\pi:S' \to S$ is as in the first part, then by \cite{Lip}, page 193, there is a birational morphism $S'' \to S'$ of regular surfaces for which the inverse image of $D' = \pi^{-1}(D)$ is a normal crossing divisor on $S''$.
\end{proof}

Recall from Definition~\ref{uinv} that the $u$-invariant of a field is the
maximal dimension of anisotropic quadratic forms over the field.  Below we use Notation~\ref{notation}. 

\begin{prop} \label{u-inv patches}
Let $\wh X$ be a regular projective $T$-curve with function field $F$ and closed fiber $X$.  Let $q$ be a regular diagonal quadratic form over $F$.   
\renewcommand{\theenumi}{\alph{enumi}}
\begin{enumerate}
\item  \label{big_local}
Let $X_0$ be an irreducible component of $X$, with function field $\kappa(X_0)$.
If $\dim q > 2u(\kappa(X_0))$
then $q_{F_U}$ is isotropic for some Zariski dense affine open subset $U \subset X_0$.
\item \label{small_local}
Let $P$ be a closed point of $X$ with residue field $\kappa(P)$, and assume that there are $c$ components of the singular divisor of $q$ that pass through $P$.  If $\dim q > 2^c u(\kappa(P))$ then $q_{F_P}$ is isotropic.
\end{enumerate}
\end{prop}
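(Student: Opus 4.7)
The plan is to treat both parts through a Springer-style decomposition of $q$ at the relevant local ring on $\wh X$, followed by a Hensel-lifting step. In each case I split the diagonal entries of $q$ according to the parities of their valuations along the singular divisor components passing through the chosen center ($\eta$ for (a), or $P$ for (b)), use the hypothesis on $u(\kappa(-))$ together with pigeonhole to locate a ``residue form'' that is isotropic over the residue field, and then apply Lemma~\ref{affine_hensel} to lift this isotropic vector back to the ambient field ($F_U$ or $F_P$).

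For part (a), let $\eta$ be the generic point of $X_0$; the local ring of $\wh X$ there is a discrete valuation ring with uniformizer $\pi$ and residue field $\kappa(X_0)$. After multiplying each $a_i$ by a suitable square in $F^\times$, I may assume $v_\eta(a_i) \in \{0,1\}$, and regrouping gives an isometry $q \cong q_0 \perp \pi q_1$, where $q_0$ and $q_1$ are diagonal forms whose coefficients are units at $\eta$. Since $\dim q_0 + \dim q_1 = \dim q > 2u(\kappa(X_0))$, at least one summand $q_\delta$ has $\dim q_\delta > u(\kappa(X_0))$, so its reduction $\bar q_\delta$ is isotropic over $\kappa(X_0)$. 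I then choose a dense affine open $U \subseteq X_0$ small enough that (i) the coefficients of $q_\delta$ are units in $\wh R_U$ and (ii) an explicit representative of the $\kappa(X_0)$-isotropic vector defines a section $\Spec \OO_{X_0}(U) \to \{q_\delta = 0\}$ that avoids the origin. Lemma~\ref{affine_hensel} then lifts this section to $\Spec \wh R_U$, producing a nontrivial zero of $q_\delta$, and hence of $q$, over $F_U$.

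For part (b), the complete local ring $\wh R_P$ is a two-dimensional regular local ring, hence a UFD. After reducing to the normal-crossings case via Lemma~\ref{resolution} if necessary, each of the $c$ singular components $D_j$ through $P$ corresponds to a principal height-one prime $(f_j) \subset \wh R_P$ with $f_j \in \wh R_P$ irreducible. Multiplying each $a_i$ by an appropriate square in $F_P^\times$ to absorb the (necessarily even) valuation contributions coming from non-singular primes through $P$, I write
\[ a_i = u_i\, f_1^{\epsilon_{i,1}} \cdots f_c^{\epsilon_{i,c}}, \qquad u_i \in \wh R_P^\times,\ \epsilon_{i,j} \in \{0,1\}. \]
Grouping the indices $i$ by their parity vectors in $\{0,1\}^c$ decomposes $q_{F_P}$ as an orthogonal sum of $2^c$ forms of the shape $f_1^{\epsilon_1}\cdots f_c^{\epsilon_c}\, q_\epsilon$, each $q_\epsilon$ being diagonal with coefficients in $\wh R_P^\times$. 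From $\dim q > 2^c u(\kappa(P))$ and pigeonhole, some $q_\epsilon$ satisfies $\dim q_\epsilon > u(\kappa(P))$, so $\bar q_\epsilon$ is isotropic over $\kappa(P)$. Applying Lemma~\ref{affine_hensel} with $R = \wh R_P$, $I = \mf m_P$, and $X = \{q_\epsilon = 0\} \setminus \{0\}$ (smooth over $\wh R_P$ since all coefficients of $q_\epsilon$ are units) lifts this isotropy to $F_P$, and scaling by the nonzero factor $f_1^{\epsilon_1}\cdots f_c^{\epsilon_c}$ preserves it; hence $q_{F_P}$ is isotropic.

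The main technical obstacle is the clean factorization $a_i = u_i \prod_j f_j^{\epsilon_{i,j}}$ in part (b): this relies on the UFD structure of the regular local ring $\wh R_P$, together with the parity control built into the definition of the singular divisor (non-singular prime divisors through $P$ contribute even valuations to every $a_i$ and so can be absorbed into squares in $F_P^\times$). In part (a) the analogous step is simpler because only a single uniformizer enters. A smaller coordination issue in part (a) is that $U$ must be shrunk simultaneously to satisfy the unit, regularity, and smoothness conditions, but each condition excludes only a proper closed subset of $X_0$, so a dense affine open meeting all of them exists.
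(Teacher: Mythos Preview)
Your proof is correct and follows essentially the same approach as the paper: a Springer-type decomposition by valuation parity, pigeonhole, and a Hensel lift via Lemma~\ref{affine_hensel}. Two small points: the remark about ``reducing to the normal-crossings case via Lemma~\ref{resolution}'' in part~(b) is misplaced---no resolution is needed here (it is invoked only later, in the proof of Theorem~\ref{main}, to arrange $c\le 2$), and the statement as written allows arbitrary $c$; and Lemma~\ref{affine_hensel} requires an affine scheme, so you should apply it to the full affine quadric cone $\{q_\epsilon=0\}$ (noting the section lands in the smooth locus, the complement of the origin) rather than to $\{q_\epsilon=0\}\setminus\{0\}$, exactly as the paper does.
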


\begin{proof}
Write $q = \left<a_1, \ldots, a_d \right>$ with $a_i \in F$.  After a multiplicative change of variables, we may assume that each $a_i$ lies in $\wh R_U$ or $\wh R_P$ respectively.

(\ref{big_local}) Since $\wh X$ is regular, the maximal ideal of the local
ring at the generic point $\eta$ of $X_0$ (which is a codimension one point of $\wh X$) is principal.  So there is a Zariski affine open neighborhood $\Spec R \subset \wh X$ of $\eta$ whose closed fiber $U$ is an affine open subset of $X_0$ along which $X$ is regular, and such that the defining ideal of $U$ in $\Spec R$ is principal, say with generator $t_0 \in R \subset F$.  Thus $t \in t_0 R$.

Consider the principal divisor $(a_1\cdots a_d)$ on $\wh X$.  Each of its components other than $X_0$ (which may or may not be a component of this divisor) meets $U$ at only finitely many points.  After shrinking $U$ by deleting those points, we may assume that the restriction of this divisor to $\Spec \wh R_U$ is 
either trivial or is supported along the closed fiber.  Thus $q$ is isometric
to a diagonal form over $\wh R_U \cap F$ whose entries are each either units
in $\wh R_U$ or the product of $t_0$ and a unit (since even powers of $t_0$ may be factored out).  Therefore, over $\wh R_U \cap F$, the form $q$ is isometric to $q' \perp t_0 q''$, where $q',q''$ are diagonal forms all of whose entries are units in $\wh R_U$.  It suffices to show (possibly after shrinking $U$ again) that either $q'$ or $q''$ is isotropic over $F_U$, since then $q' \perp t_0 q''$ and hence $q$ would be as well.

Since $\dim q = \dim q' + \dim q''$, 
the assumption on $\dim q$ implies that either $q'$ or $q''$ has dimension $e$ greater than $u(\kappa(X_0))$.  Let 
$Q\subset {\mathbb A}^e_{\wh R_U}$ be the affine quadric cone defined by that subform and let $Q' \subset Q$ be the complement of the origin.
(Thus $Q$ is the affine cone over the projective quadric defined by that subform.)  
Since $e>u(\kappa(X_0))$, it follows that $Q'(\kappa(X_0)) \neq \varnothing$.
Therefore there is a rational section of the affine morphism $Q \to \Spec R_U$ over $U$ whose image lies on (the closed fiber of) $Q'$.  
This rational section is defined as a morphism on a dense open subset of $U$.  Replacing $U$ by that subset, we may 
assume that this rational map is a section $U \to Q' \subset Q$
of $Q \to \Spec \wh R_U$ over $U$.  Now $Q'$ is the smooth locus of $Q$ over $\wh R_U$, since the residue characteristic of $T$ is not $2$, 
and since the quadratic form is diagonal with unit coefficients.  So by Lemma~\ref{affine_hensel},  the section over $U$ lifts to a 
section $\Spec \wh R_U \to Q$.  This yields an $F_U$-point of $Q$ that is not the origin (since its restriction to $U$ is not).  
Hence either $q'$ or $q''$ is isotropic over $F_U$, as desired.

(\ref{small_local}) Since $\wh R_P$
is regular and local, it is a unique factorization domain
(\cite{Eis:CA}, Theorem 19.19).  So the components $D_j$ of the singular
divisor $D$ that pass through $P$ are the loci of irreducible elements $r_j
\in \wh R_P$, $1 \le j \le c$.   After rescaling the variables we obtain an
isometric form $q' = \left<a_1', \ldots, a_d' \right>$ with the same singular
divisor as $q$, such that each $a_i'$ is of the form $u_i\prod r_j^{n_{ij}} \in \wh R_P \cap F$ for some units $u_i \in \wh R_P^\times$, where each $n_{ij} = 0$ or $1$.  For each  
$c$-tuple $\lambda = (\lambda_1, \ldots,
\lambda_c) \in \{0, 1\}^c$, 
let $S(\lambda) = \{i \,|\, n_{i,j}
= \lambda_j \ \text{for}\  j=1,\dots,c\}$
and define $q_\lambda = \underset{i \in S(\lambda)}{\perp} u_i$.
Let $\ov q_\lambda$ be the
reduction of $q_\lambda$ modulo the maximal ideal of $\wh R_P$.

Since $\dim q > 2^c u(\kappa(P))$, at least one of the $2^c$ forms $\ov q_\lambda$ over $\kappa(P)$ has dimension $e$ 
greater than $u(\kappa(P))$.  
Hence $Q'(\kappa(X_0)) \neq \varnothing$, where $Q'$ is the complement of the origin in the affine cone $Q\subset {\mathbb A}^e_{\wh R_P}$ 
defined by the form $q_\lambda$.  Since $Q'$ is the smooth locus of $Q$ over $\wh R_P$, Lemma~\ref{affine_hensel} lifts the 
$\kappa(X_0)$-point of $Q'$ to a section $\Spec \wh R_P \to Q$.  
This yields an $F_P$-point of $Q$ that is not the origin, thereby showing that $q_\lambda$ is isotropic over $F_P$.  
Thus so is $(\prod r_j^{\lambda_j}) q_\lambda$.
Since $(\prod r_j^{\lambda_j}) q_\lambda$ is a subform of $q'$, this implies that $q'$ is isotropic as well.  Hence so is the isometric form $q$.
\end{proof}

\begin{lem} \label{uinvdvr}
Let $T$ be a discrete valuation ring with fraction field~$K$ and residue field~$k$ of characteristic unequal to~$2$.  
Then $u(K) \ge 2u(k)$ and $u_s(K) \ge 2u_s(k)$.
\end{lem}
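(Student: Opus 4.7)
The plan is to establish the ``Springer-type'' inequality $u(K) \ge 2u(k)$ by direct construction, then formally deduce $u_s(K) \ge 2 u_s(k)$ by producing DVR extensions of $T$ whose residue fields realize given finite, respectively transcendence degree one, extensions of $k$.

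For $u(K) \ge 2u(k)$, I would set $n = u(k)$ (the case $n = \infty$ being identical), choose an anisotropic form $q_0 = \langle \bar a_1, \ldots, \bar a_n\rangle$ over $k$, lift each $\bar a_i$ to a unit $a_i \in T^\times$, fix a uniformizer $\pi$ of $T$, and consider the $2n$-dimensional diagonal form
\[
Q \;=\; \langle a_1, \ldots, a_n,\, -\pi a_1, \ldots, -\pi a_n\rangle
\]
over $K$. The heart of the argument is a standard primitivity computation: any isotropic vector of $Q$ over $K$ may be scaled by a suitable power of $\pi$ to a primitive $(v,w) \in T^n \times T^n$ (i.e.\ with some coordinate in $T^\times$). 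The equation $\tilde q(v) = \pi\, \tilde q(w)$, where $\tilde q = \langle a_1, \ldots, a_n\rangle$, reduces modulo $\pi$ to $q_0(\bar v) = 0$, forcing $\bar v = 0$ by anisotropy of $q_0$; thus $v = \pi v'$ for some $v' \in T^n$, and after substituting and dividing by $\pi$, the equation becomes $\pi \tilde q(v') = \tilde q(w)$, whose mod $\pi$ reduction likewise forces $\bar w = 0$, contradicting primitivity. Hence $Q$ is anisotropic and $u(K) \ge 2n$.

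For $u_s(K) \ge 2 u_s(k)$, I would realize every finite extension $k'/k$ and every finitely generated transcendence degree one extension $E/k$ as the residue field of an appropriate DVR extension of $T$. In the separable case of finite $k'/k$ with primitive element $\bar\alpha$ and minimal polynomial $\bar f$, a monic lift $f \in T[x]$ remains irreducible (by Gauss in the UFD $T$), and $T' := T[x]/(f)$ is a free $T$-module with principal maximal ideal $\pi T'$ and residue field $k'$, hence a DVR with fraction field $K' = K[x]/(f)$ finite over $K$; the general case follows by iterating this construction through the separable and purely inseparable layers of the tower. Similarly, $E$ can be written as the field of fractions of a one-dimensional finitely generated integral $k$-algebra $R$; lifting a presentation of $R$ to $\tilde R$ over $T$, killing $\pi$-torsion to obtain flatness, and localizing at the height one prime $\pi \tilde R$ produces a DVR $T''$ with residue field $E$ and fraction field $F$ of transcendence degree one over $K$.

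Applying the first inequality to $T'$ and $T''$ yields $u(K') \ge 2 u(k')$ and $u(F) \ge 2 u(E)$. Since $K'/K$ is finite and $F/K$ has transcendence degree one, Definition~\ref{suinv} for $K$ gives $u(K') \le u_s(K)$ and $u(F) \le 2 u_s(K)$, hence $u(k') \le u_s(K)/2$ and $u(E) \le u_s(K)$ for every such $k'$ and $E$. Thus $u_s(K)/2$ satisfies the defining conditions of $u_s$ for $k$, so $u_s(k) \le u_s(K)/2$, i.e.\ $u_s(K) \ge 2 u_s(k)$. The main obstacle is the primitivity-based anisotropy argument of Springer's theorem in the first step; everything afterwards is formal modulo the standard DVR-extension constructions above.
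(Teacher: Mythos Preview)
Your proof is correct and follows essentially the same route as the paper: the paper obtains $u(K)\ge 2u(k)$ by citing Springer's result (\cite{Lam}, VI.1.9(2)) over the completion $\wh K$ rather than writing out the primitivity argument, and for $u_s$ it picks a single witness extension of $k$ realizing $u_s(k)$ rather than bounding all extensions uniformly, but the underlying DVR-lifting constructions are the same. One small caution on your transcendence-degree-one lift: after lifting an arbitrary presentation and ``killing $\pi$-torsion,'' the reduction mod $\pi$ need not recover $R$ and $\pi\tilde R$ need not be prime; it is cleaner---and this is what the paper does---to write $E=k(x)[a]$ with $a$ satisfying a monic irreducible $f\in k[x,y]$, lift to $\tilde f\in T[x,y]$, observe that $T[x,y]/(\tilde f)$ is already a domain in which $(\pi)$ is prime with residue ring $k[x,y]/(f)$, and localize there.
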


\begin{proof}
Let $t$ be a uniformizer of $T$, and hence of its completion $\wh T$.  Let $q$
be an anisotropic form over $k$ and let $n$ be its dimension.  Since the
characteristic of $k$ is not $2$, 
we may assume that $q$ is diagonal.  Let $\til q$ be a diagonal lift of $q$ to $T$. 
By \cite{Lam}, VI.1.9(2), $q' = \til q \perp t \til q$ is anisotropic over $\wh K$, the fraction field of $\wh T$.  Hence $q'$ is also anisotropic over $K$.  This shows that if $u(k)\ge n$ then $u(K) \ge 2n$; and that proves the first assertion.

For the second assertion, let $n=u_s(k) \in {\frac 12}\Z$.  By definition of $u_s$, there is either an anisotropic quadratic form $q$ of dimension $n \in \Z$ over a finite extension $E$ of $k$, or an anisotropic quadratic form $q$ of dimension $2n \in \Z$ over a finitely generated field extension $E$ of $k$ of transcendence degree one.  After replacing $q$ by an isometric form, we may assume in either situation that $q$ is diagonal. We consider the above two cases in turn.

In the former case, $u(E)=n$.  Observe that there is a finite extension $L$ of
$K$ whose residue field is $E$.  (Namely, we inductively reduce to the case that $E = k[a]$ for some $a \in E$, say with monic minimal polynomial $f(y) \in k[y]$; and then take $L = K[\til a]$, where $\til a$ is a root of some monic lift of $f(y)$ to $T[y]$.)  By the first  assertion of the lemma, $u(L) \ge 2 u(E) = 2n$.  But $u_s(K) \ge u(L)$.  So $u_s(K) \ge 2n = 2u_s(k)$.

In the latter case, $u(E) = 2n$.  Let $\{x\}$ be a transcendence basis for $E$
over $k$. We may assume that $E = k(x)[a]$ for some $a \in E$, say with monic minimal polynomial $f \in k[x,y]$ over $k(x)$.  Take a monic lift $\til f \in T[x,y]$ of $f$ and let $F$ be the fraction field of $T[x,\til a]$, where $\til a$ is a root of $\til f$.  This is a field of transcendence degree one over $K$.
Taking the normalization of $T[x]$ in $F$, we obtain a normal $T$-curve $\til X$ whose closed fiber $X$ is irreducible and has function field $E$.  Let $\xi$ be the generic point of $X$, and let $R$ be the local ring of $\til X$ at $\xi$.  Thus $R$ is a discrete valuation ring with fraction field $F$ and residue field $E$.  By the first assertion of the lemma, $u(F) \ge  2u(E) = 4n$; and so $u_s(K) \ge 2n = 2u_s(k)$.
\end{proof}
 
We now prove our main result about quadratic forms, in terms of the strong $u$-invariant (see Definition~\ref{suinv}). 

\begin{thm} \label{main}
Let $T$ be a complete discrete valuation ring having fraction field $K$ and residue field $k$, with $\cha k \ne 2$.  Then $u_s(K) = 2u_s(k)$.
\end{thm}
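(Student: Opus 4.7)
The inequality $u_s(K) \ge 2u_s(k)$ is Lemma~\ref{uinvdvr}, so the task is to prove $u_s(K) \le 2u_s(k)$. Setting $n = u_s(k)$, this amounts to verifying, first, that every finite extension $E/K$ satisfies $u(E) \le 2n$, and second, that every finitely generated transcendence degree one extension $F/K$ satisfies $u(F) \le 4n$.

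The finite extension case is classical: $E$ is the fraction field of the complete discrete valuation ring $T_E$ obtained as the integral closure of $T$ in $E$, whose residue field $k_E$ is finite over $k$. Springer's theorem for complete discretely valued fields of residue characteristic not $2$ (e.g.\ \cite{Lam}, VI.1.9) gives $u(E) = 2 u(k_E) \le 2u_s(k) = 2n$.

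For the transcendence degree one case, which is the heart of the argument, let $q$ be a regular quadratic form over $F$ with $\dim q > 4n$; I aim to show $q$ is isotropic. After diagonalizing $q$, I pick a normal projective $T$-curve $\wh X$ with function field $F$. By Lemma~\ref{resolution} I may replace $\wh X$ by a regular model on which the union of the closed fiber $X$ and the singular divisor $D$ of $q$ has only normal crossings. I then choose a finite morphism $f : \wh X \to \PP^1_T$, as in \cite{HH:FP}, Proposition~6.6, so that $\mathcal{P} := f^{-1}(\infty)$ contains every point where distinct components of $X$ meet, placing me in the setup of Notation~\ref{P1 notation} with patches indexed by $\mathcal{P} \cup \mathcal{U}$. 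For each component $X_0$ of $X$, the function field $\kappa(X_0)$ has transcendence degree one over $k$, hence $u(\kappa(X_0)) \le 2n$; since $\dim q > 4n \ge 2 u(\kappa(X_0))$, Proposition~\ref{u-inv patches}(a) gives isotropy of $q$ over $F_{U_0'}$ for some dense open $U_0' \subset X_0$. By enlarging $\mathcal{P}$ (equivalently, choosing $f$ with additional preimages of $\infty$) to include the finitely many points of $X_0 \smallsetminus U_0'$ across all components, I arrange that each $U \in \mathcal{U}$ is contained in the corresponding $U_0'$, so $q_{F_U}$ is isotropic. For each $P \in \mathcal{P}$, the normal crossings hypothesis caps the number $c$ of components of $D$ through $P$ at two, and $u(\kappa(P)) \le n$ since $\kappa(P)/k$ is finite; hence $\dim q > 4n \ge 2^c u(\kappa(P))$, and Proposition~\ref{u-inv patches}(b) gives isotropy of $q$ over $F_P$. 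Theorem~\ref{iso_patch} then forces $q$ to be isotropic over $F$.

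The main technical obstacle is the enlargement of $\mathcal{P}$ to match the dense open subsets produced by Proposition~\ref{u-inv patches}(a): one must know that $\wh X$ admits a finite morphism to $\PP^1_T$ whose fibre over $\infty$ includes a prescribed finite set of closed points along with the component intersections, which is supplied by the flexibility of \cite{HH:FP}, Proposition~6.6. Once this is arranged, the remaining argument is a clean combination of the local isotropy bounds of Proposition~\ref{u-inv patches} with the local-global principle Theorem~\ref{iso_patch}; the constants $2n$ for $u(\kappa(X_0))$ and $n$ for $u(\kappa(P))$, together with the normal-crossings bound $c\le 2$, fit exactly into the dimension hypothesis $\dim q>4n$.
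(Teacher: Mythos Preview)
Your proof is correct and follows essentially the same route as the paper's: Springer's theorem for the finite extension case, and for the transcendence degree one case the combination of a regular model with normal crossings for the singular divisor, Proposition~\ref{u-inv patches}(a,b) for local isotropy, and Theorem~\ref{iso_patch} for the global conclusion. The only cosmetic differences are that you impose normal crossings on $X \cup D$ rather than just $D$ (harmless, since a subdivisor of a normal crossings divisor is again normal crossings), and that you initially choose $f$ before locating the dense opens $U_0'$ and then ``enlarge'' $\mathcal P$; the paper reverses this order, but your final paragraph correctly identifies that this amounts to re-choosing $f$ via \cite{HH:FP}, Proposition~6.6, so the logic is the same.
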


\begin{proof}

By the second part of Lemma~\ref{uinvdvr}, $u_s(K) \ge 2u_s(k)$.  It remains to
show that $u_s(K) \le 2u_s(k)$.  Write $n=u_s(k)$, so
every finite extension of $k$ has $u$-invariant at most $n$.  By Springer's theorem on nondyadic complete discrete valuation fields (see \cite{Lam}, VI.1.10 and XI.6.2(7)), every finite extension of $K$ has $u$-invariant at most $2n$. To
prove the desired inequality, we must therefore show that every finitely generated
field extension of transcendence degree one over $K$ has $u$-invariant at
most $4n$. Let $F$ be such a field extension, and let $q$ be a quadratic form over $F$ of dimension $> 4n$.  We wish to show that $q$ is isotropic.  

We may assume that $q$ is regular, since otherwise it is trivially isotropic.  The characteristic of $F$ is not $2$, by the same property for $k$; so there is a diagonal form over $F$ that is isometric to $q$, and we may replace $q$ by that form.  Let $\wh X_1$ be a normal projective model for $F$ over $T$, and let $D_1$ be the singular divisor of $q$ on $\wh X_1$ (see Definition~\ref{discrimdef}).  By Lemma~\ref{resolution}, there is a regular projective $T$-curve~$\wh X$ with function field $F$, and a birational morphism $\pi:\wh X \to \wh X_1$, such that $\pi^{-1}(D_1)$ has only normal crossings.  The singular divisor $D$ of $q$ on $\wh X$ is contained in $\pi^{-1}(D_1)$, and so it also has only normal crossings. 

For each irreducible component $X_0$ of the closed fiber $X$ of $\wh X$, the function field $\kappa(X_0)$ has transcendence degree one over $k$; and so $u(\kappa(X_0)) \le 2u_s(k) = 2n$ by the definition of $u_s$.  Hence for each such component, $\dim q > 4n \ge 2u(\kappa(X_0))$; and thus by Proposition~\ref{u-inv patches}(\ref{big_local}), we may pick a Zariski dense affine open subset $U_0 \subset X_0$ such that $q_{F_{U_0}}$ is isotropic.
By~\cite{HH:FP}, Proposition~6.6, there is a finite morphism $f:\wh X \to \PP^1_T$ such that $f^{-1}(\infty)$ contains the (finitely many) points of $X$ that do not lie in any of our chosen sets $U_0$ (as $X_0$ ranges over the components of $X$), as well as containing all the closed points at which distinct components of $X$ meet.  Under Notation~\ref{P1 notation}, and by the choice of $f$,
each $U \in \mc U$ is contained in one of the above sets $U_0$; hence 
$F_U$ contains $F_{U_0}$.  Thus 
$q_{F_U}$ is isotropic for each $U \in \mc U$.  Meanwhile, since the singular divisor of $q$ has at most normal crossings, the number of components of this divisor that pass through any closed point $P \in X$ is at most two.  Since $u(\kappa(P)) \le u_s(k) = n$ for each $P$, we have that $\dim q > 4n \ge 4u(\kappa(P)) \ge 4$.  So by Proposition~\ref{u-inv patches}(\ref{small_local}), $q_{F_P}$ is isotropic for each $P \in \mc P$. Therefore by Theorem~\ref{iso_patch}, $q$ is indeed isotropic.
\end{proof}

The above result generalizes from the complete case to the henselian case.  First we prove a lemma that relies on the Artin Approximation Theorem (\cite{Artin}, Theorem~1.10).

\begin{lem}\label{approx}
Let $T$ be an excellent henselian discrete valuation ring, and let $\wh K$ be the completion of its fraction field $K$.  
Let $E$ be a finitely generated field extension of $K$ having transcendence degree at most one, and let $X$ be a projective $E$-variety.  
Suppose that 
$X(\wh E) \ne \varnothing$ for every finitely generated field extension $\wh E$ of $\wh K$ that contains $E$ and satisfies 
$\td_{\wh K} \wh E = \td_K E$. 
Then $X(E) \ne \varnothing$.
\end{lem}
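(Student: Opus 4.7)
I plan to use Artin's Approximation Theorem (\cite{Artin}, Theorem~1.10), applied to an excellent henselian local ring whose completion has a fraction field large enough for the hypothesis to provide a point of $X$. After embedding $X \hookrightarrow \PP^N_E$ and spreading out to a projective scheme $\mathcal{X} \to \Spec A$ for some finitely generated $T$-subalgebra $A \subset E$ with $\Quo(A) = E$, I would distinguish two cases based on $d := \td_K E \in \{0,1\}$. In the case $d=0$, take $A = T_E$, the integral closure of $T$ in $E$; since $T$ is excellent henselian and $E/K$ is finite, $T_E$ is also an excellent henselian discrete valuation ring with $\Quo(T_E) = E$, and its completion satisfies $\Quo(\wh{T_E}) = E \otimes_K \wh K$, a finite extension of $\wh K$ and the unique admissible $\wh E$. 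In the case $d=1$, take $A$ to be the local ring at a closed point $P$ of the closed fiber of a regular projective $T$-model of $E$ (blowing up via Lemma~\ref{resolution} if needed to ensure regularity at $P$), and henselize at the maximal ideal to obtain an excellent henselian regular local ring $R = \mathcal{O}_{\mathcal Y,P}^h$ of dimension two.

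Next I would invoke the hypothesis. In both cases the completion $\wh R$ contains $\wh T$ (since $t$ lies in the maximal ideal of $R$) and contains $E$, so $\Quo(\wh R) \supset E \cdot \wh K$. The natural map $E \otimes_K \wh K \to \Quo(\wh R)$ has a prime kernel $\mathfrak p$, and $\wh E := \Quo((E \otimes_K \wh K)/\mathfrak p)$ is a finitely generated extension of $\wh K$ of transcendence degree $d$ containing $E$. The hypothesis then provides $X(\wh E) \ne \varnothing$, hence $X(\Quo(\wh R)) \ne \varnothing$. Because $R$ is a regular local ring (and thus a UFD) and $\mathcal{X}$ is projective, I would lift this $\Quo(\wh R)$-point to an $\wh R$-valued point of $\mathcal{X} \times_A \wh R$ by rescaling projective coordinates to obtain a primitive representative.

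Then I would apply Artin Approximation: an affine chart of $\mathcal{X}$ around this point gives a finite system of polynomial equations with coefficients in $A$ that is solved in $\wh R$, and Artin's theorem produces a solution in $R$. In the case $d=0$, $R = T_E$ has $\Quo(R) = E$, so this $R$-point restricts to the desired $E$-point of $X$, completing the argument.

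The main obstacle will be the case $d=1$, where $\Quo(R)$ is an algebraic but typically infinite extension of $E$: the $R$-point of $\mathcal{X}$ factors through some finite \'etale $A_{\mathfrak m}$-subalgebra $A'$, and thus yields only a $\Quo(A')$-point of $X$ with $\Quo(A')/E$ a finite separable extension. The descent from $\Quo(A')$ back to $E$ is the crux of the argument, and I expect it will require exploiting the uniformity of the hypothesis over \emph{all} admissible $\wh E$---by varying $P$ across sufficiently many closed points of the closed fiber and using an intersection identity (roughly, $A = \bigcap_{\mathfrak m} A_{\mathfrak m}$ inside $E$) to glue the resulting local approximations into a point actually defined over $E$.
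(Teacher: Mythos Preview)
Your case $d=0$ is essentially the paper's argument. The case $d=1$, however, has a genuine gap that your proposed fix does not close.

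The obstacle you identify is real: after henselizing at a closed point $P$ of a regular $T$-model and applying Artin approximation, you obtain a point of $X$ only over $\Quo(R)$, which is an infinite separable algebraic extension of $E$; so you get $X(E')\ne\varnothing$ for some finite separable $E'/E$ depending on $P$. Your suggested remedy---varying $P$ and invoking $A=\bigcap_{\mf m}A_{\mf m}$---does not work. That intersection identity concerns a single element of $E$ lying in many localizations; it says nothing about an assortment of points $x_P\in X(E_P)$ over varying finite extensions $E_P/E$. These points need not be compatible in any sense, and there is no mechanism to ``glue'' them into a point of $X(E)$. Having rational points over many finite separable extensions simply does not imply a rational point over the base field.

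The paper avoids two-dimensional henselization entirely. It writes $E=K(x)[y_1,\dots,y_r]/(g_1,\dots,g_s)$ with $g_h\in T[x,y]$, first checks (again via Artin approximation, applied to $T$) that $\wh E:=E\otimes_K\wh K$ is a field, and then uses the hypothesis to get a point $(\bar a_0,\dots,\bar a_n)\in X(\wh E)$. Lifting the $\bar a_i$ to $\hat a_i\in\wh T[x,y]$, the condition ``$(\bar a_i)$ lies on $X$'' becomes a finite system of identities $f_j(\hat a_0,\dots,\hat a_n)=\sum_h b_{jh}g_h$ in $\wh T[x,y]$. Fixing degree bounds, these are polynomial equations over $T$ in the \emph{coefficients} of the $\hat a_i$ and $b_{jh}$; Artin approximation for the henselian ring $T$ then produces $a_i',b_{jh}'\in T[x,y]$ satisfying the same identities and congruent to $\hat a_i,b_{jh}$ modulo a high power of $t$ (ensuring nontriviality). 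Reducing modulo $(g_1,\dots,g_s)$ yields a point of $X(E)$. The key idea you are missing is to push Artin approximation down to $T$ itself by treating the unknowns as polynomial coefficients, rather than working in a two-dimensional local ring whose fraction field is larger than $E$.
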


\begin{proof}
Let $t$ be a uniformizer of $T$.  By hypothesis, $X$ is a closed subset of some $\PP^{\,n}_E$ defined by homogeneous polynomials 
$f_1,\dots,f_m \in E[z_0,\dots,z_n]$.  

First consider the case that $E$ is finite over $K$.  After multiplying the polynomials $f_i$ by some power of $t$, we may assume that 
each $f_i$ lies in $S[z_0,\dots,z_n]$, where $S$ is the integral closure of $T$ in $E$ (this being the valuation ring of $E$).
Extend the valuation on $K$ to $E$.  Then the completion $\wh E$ of $E$ is finite over $\wh K$ (and is the compositum of its subfields 
$\wh K$ and $E$); so by assumption, $X$ has an $\wh E$-point.  After multiplying a choice of coordinates of the point by some power of $t$, 
we may assume that each coordinate $\bar a_i$ lies in the valuation ring $\wh S$ of $\wh E$ (where $\wh S$ is also the integral closure of 
$\wh T$ in $\wh E$).  Thus we have a 
solution $(\bar a_0,\dots,\bar a_n) \in \wh S^{n+1}$ of the polynomial equations $f_1=\dots=f_m=0$, with not all $\bar a_i$ equal to~$0$.  
So for some $e>0$ and some $i_0$, the element
$\bar a_{i_0} \in \wh S$ is not congruent to zero modulo $t^e\wh S$.  By the Artin Approximation Theorem (\cite{Artin}, 
Theorem~1.10), there exists a solution $(a_0,\dots,a_n) \in S^{n+1}$ to the system $f_1=\dots=f_m=0$ such that $a_i \equiv \bar a_i$ modulo
$t^e\wh S$. In particular,  $a_{i_0} \ne 0$.  Hence $(a_0,\dots,a_n)$ defines an $S$-point of $X$, and $X(E) \ne \varnothing$. 

It remains to consider the case that $E$ has transcendence degree one over $K$.  
Thus $E = K(x)[y_1,\dots,y_r]/(g_1,\dots,g_s)$, a finite extension of $K(x)$, for some polynomials $g_i \in T[x,y]$ 
defining a prime ideal $I \subset T[x,y]$ that does not extend to the unit ideal in $K(x)[y]$.  (Here for short we write $y$ for $y_1,\dots,y_r$.  
Below we also write $g$ for $g_1,\dots,g_s$.)  Since $\wh K(x)[y]$ is faithfully flat over $K(x)[y]$, the extension $\wh I$ of $I$ to 
$\wh T[x,y]$ does not induce the unit ideal in $\wh K(x)[y]$.  In particular, $\wh I$ is a proper ideal  in $\wh T[x,y]$. 

We claim that 
$\wh I$ is a prime ideal in $\wh T[x,y]$.  For if it were not, then there would exist $c,d \in \wh T[x,y]\smallsetminus \wh I$ 
for which $cd\in \wh I$; i.e., $cd= \sum e_ig_i$ for some $e_i \in \wh T[x,y]$. 
But then \cite{Artin}, Theorem~1.10, applied to the coefficients of the elements $c,d,e_i$, would produce a contradiction to $I$ being prime, 
which proves the claim.  

Since $\wh I = (g)$ is prime in $\wh T[x,y]$, 
the ring $\wh E := \wh K(x)[y]/(g) = E \otimes_K \wh K$ is a domain. 
But $\wh E$ is finite over the field $\wh K(x)$, since $E$ is finite over $K(x)$; hence $\wh E$ is a field, and is the compositum of 
its subfields $E$ and $\wh K$.  Since $\wh E$ has transcendence degree one over $\wh K$, by the hypothesis there is an $\wh E$-point of $X$; 
i.e.\ a solution $(\bar a_0,\dots,\bar a_n) \in \wh E^{n+1}$ to the system $f_1=\dots=f_m=0$, with 
some $\bar a_{i_0} \ne 0$.
Lifting each $\bar a_i$ to an element of $\wh K(x)[y]$ and then multiplying by a non-zero element of $\wh T[x]$, we obtain elements 
$\hat a_i \in \wh T[x,y]$ for $i=0,\dots,n$, and elements $b_{jh} \in \wh T[x,y]$ for $j=1,\dots,m$ and $h=1,\dots,s$, such that 
$f_j(\hat a_0,\dots,\hat a_n) = \sum_h b_{jh} g_h \in \wh T[x,y]$ for all $j$.  Moreover $\hat a_{i_0} \not\in \wh I \subset\wh T[x,y]$,
and hence for some $e>0$
its image in $(\wh T/t^e \wh T)[x,y]$ does not lie in the reduction of $\wh I$.  Applying \cite{Artin}, 
Theorem~1.10, to the coefficients of $x$ and $y$ in 
$\hat a_i,b_{jh}$, there exist $a_i',b_{jh}' \in T[x,y]$ that are congruent to $\hat a_i, b_{jh}$ modulo $t^e$, such that 
$f_j(a_0',\dots,a_n') = \sum_h b_{jh}'g_h \in T[x,y]$ for all $j$.  The reductions of $a_0',\dots,a_n'$ modulo $I$ then yield a solution 
$(a_0,\dots,a_n) \in (T[x,y]/I)^{n+1} \subset E^{n+1}$ to the system $f_1=\dots=f_m=0$, with $a_{i_0} \ne 0$.  This solution then defines 
an $E$-point of $X$.
\end{proof}

\begin{cor}[\label{hensel}]
Let $T$ be an excellent henselian discrete valuation ring with fraction field $K$ and with residue field $k$ of characteristic unequal to $2$.  Then $u_s(K) = 2u_s(k)$.
\end{cor}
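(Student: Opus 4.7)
The plan is to deduce the henselian case from the complete case (Theorem~\ref{main}) using the Artin approximation statement of Lemma~\ref{approx}. Let $\wh K$ denote the completion of $K$ with respect to its discrete valuation. Since $T$ is an excellent henselian discrete valuation ring, its completion $\wh T$ has the same residue field $k$; so Theorem~\ref{main} applied to $\wh T$ gives $u_s(\wh K) = 2u_s(k)$. The lower bound $u_s(K) \ge 2u_s(k)$ is immediate from Lemma~\ref{uinvdvr}, which applies to any discrete valuation ring. Hence the real content lies in showing $u_s(K) \le 2u_s(k)$, and I may assume $n := u_s(k)$ is finite (otherwise the asserted equality reads $\infty = \infty$, which is already covered by the lower bound).

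To bound $u_s(K)$ from above, let $E$ be either a finite field extension of $K$ or a finitely generated extension of transcendence degree one, and let $q$ be an anisotropic quadratic form over $E$. Forms of dimension $1$ are trivially bounded by $2n$, so I may assume $\dim q \ge 2$. An anisotropic form is automatically regular, so the projective quadric hypersurface $X \subset \mbb P^{\dim q - 1}_E$ cut out by $q$ is a projective $E$-variety with $X(E) = \varnothing$.

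The next step is to apply the contrapositive of Lemma~\ref{approx} to $X$: since $X(E) = \varnothing$, there exists a finitely generated field extension $\wh E$ of $\wh K$ containing $E$, with $\td_{\wh K}\wh E = \td_K E$, such that $X(\wh E) = \varnothing$. Equivalently, $q$ remains anisotropic over $\wh E$. Now I would combine this with the bound $u_s(\wh K) = 2n$ coming from Theorem~\ref{main}. If $E/K$ is finite, then $\wh E/\wh K$ is finite, so $u(\wh E) \le 2n$ and hence $\dim q \le 2n$, yielding $u(E) \le 2n$. If $E/K$ has transcendence degree one, then so does $\wh E/\wh K$, giving $u(\wh E) \le 4n$ and hence $\dim q \le 4n$, yielding $u(E) \le 4n$. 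Together these inequalities prove $u_s(K) \le 2n$.

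The only delicate point in this plan is verifying that Lemma~\ref{approx} produces an extension $\wh E/\wh K$ of precisely the same transcendence degree as $E/K$, so that the bound from $u_s(\wh K)$ matches the bound we are trying to prove; but this matching of transcendence degrees is exactly what the statement of Lemma~\ref{approx} guarantees. All other steps are formal, so once Theorem~\ref{main} and Lemma~\ref{approx} are in hand the corollary follows immediately.
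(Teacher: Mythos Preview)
Your proof is correct and follows essentially the same route as the paper: establish the lower bound via Lemma~\ref{uinvdvr}, the complete case via Theorem~\ref{main}, and transfer the upper bound from $\wh K$ to $K$ using Lemma~\ref{approx} applied to the projective quadric defined by $q$. The only cosmetic difference is that you phrase the application of Lemma~\ref{approx} contrapositively (an anisotropic $E$-form stays anisotropic over some $\wh E$), whereas the paper phrases it directly (isotropy over every such $\wh E$ forces isotropy over $E$); the two are logically identical.
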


\begin{proof}
Let $\wh K$ be the completion of $K$; this is a complete discretely valued field with residue field $k$.  Thus $u_s(\wh K) = 2u_s(k)$ by Theorem~\ref{main}.  Also, $u_s(K) \ge 2u_s(k)$ by the second part of Lemma~\ref{uinvdvr}.  Thus to prove the result it suffices to show that $u_s(K) \le 2u_s(k)$.   

So let $E$ be a finitely generated field extension of $K$ having transcendence degree $\ell \le 1$, and let $q$ be a quadratic form over $E$ of dimension $n>2^{1+\ell}u_s(k)$.  We wish to show that $q$ is isotropic over $E$.  
Let $H$ be the hypersurface in $\PP^{\,n-1}_E$ defined by $q$ (as in the proof of Theorem~\ref{iso_patch}).  Now $u_s(\wh K) = 2u_s(k)$,
and so
$n>2^\ell u_s(\wh K)$.  
Hence over every finitely generated field extension of $\wh K$ having transcendence degree $\ell$, 
over which $q$ is defined (e.g.\ containing $E$), the form $q$ is isotropic.  
Equivalently, $H$ has a rational point over each such field.  So by
Lemma~\ref{approx}, 
$H$ has a rational point over $E$; i.e.\ $q$ is isotropic over $E$.  
\end{proof}
 
Recall that $k$ is a \textit{$C_d$-field} if for all $m \ge 1$ and $n > m^d$, every homogeneous form of degree $m$ in $n$ variables over $k$ has a non-trivial solution in $k$.  In particular, a $C_d$-field $k$ satisfies $u(k) \le 2^d$ (by taking $m=2$).  Moreover, every finite extension of $k$ is also a $C_d$-field, and every one-variable function field over $k$ is a $C_{d+1}$-field (\cite{Serre:CG}, II.4.5).  Hence $u_s(k) \le 2^d$ for a $C_d$-field $k$.  

Recall also that a field $K$ is called an \textit{$m$-local field} with
\textit{residue field $k$} if there is a sequence of fields $k_0,\dots,k_m$
with $k_0=k$ and $k_m=K$, and such that $k_i$ is the fraction field of an
excellent henselian discrete valuation ring with residue field $k_{i-1}$ for
$i=1,\dots,m$.  For $K$ and $k$ as above, it follows by induction that a
finite extension of $K$ is an $m$-local field whose residue field is a finite
extension of $k$.  
Also note that if $\operatorname{char}(k)\neq 2$, 
$u_s(K) = 2^m u_s(k)$ by Theorem~\ref{hensel} and induction; and so $u(F) \le 2^{m+1}u_s(k)$ for any one-variable function field $F$ over $K$, by definition of $u_s$.

\begin{cor}\label{cor_mlocal}
Suppose that $K$ is an $m$-local field whose residue field $k$ is a $C_d$-field of characteristic unequal to $2$, and let $F$ be a function field over $K$ in one variable.  
\renewcommand{\theenumi}{\alph{enumi}}
\begin{enumerate}
\item \label{mlocalineq}
Then $u_s(K) \le 2^{d+m}$ and hence $u(F) \le 2^{d+m+1}$.  
\item \label{mlocalcdK} 
If $u(k)=2^d$ then $u(K)=2^{d+m}$. Moreover if some normal $K$-curve with function field $F$ has a $K$-point, then $u(F)=2^{d+m+1}$. 
\item \label{mlocalcdF}
If $u(k')=2^d$ for every finite extension $k'/k$, then $u(F)=2^{d+m+1}$. 
\end{enumerate}
\end{cor}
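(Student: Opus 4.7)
The plan is to deduce parts (a), (b), and (c) in sequence from Corollary~\ref{hensel} (for the upper bounds), Lemma~\ref{uinvdvr} (for the lower bounds), and the standard fact that a finite extension of an $m$-local field is again $m$-local with residue field a finite extension of the original. For part (a), since $k$ is a $C_d$-field, every finite extension of $k$ is $C_d$ and every one-variable function field over such an extension is $C_{d+1}$ (\cite{Serre:CG}, II.4.5), so $u_s(k) \le 2^d$. Iterating Corollary~\ref{hensel} up the henselian tower $k = k_0, k_1, \dots, k_m = K$ --- whose residue characteristics all equal $\operatorname{char}(k) \ne 2$ --- gives $u_s(K) = 2^m u_s(k) \le 2^{m+d}$, as already remarked before the corollary statement. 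The definition of $u_s$ then yields $u(F) \le 2 u_s(K) \le 2^{m+d+1}$.

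For part (b), the upper bounds follow from (a). For the lower bound on $u(K)$, the first inequality of Lemma~\ref{uinvdvr} applied to each DVR in the defining tower for $K$ at least doubles the $u$-invariant at every step, yielding $u(K) \ge 2^m u(k) = 2^{m+d}$. For the last assertion, suppose $X$ is a normal $K$-curve with function field $F$ admitting a $K$-rational point $P$. Then $\OO_{X,P}$ is a regular one-dimensional local ring, hence a DVR with fraction field $F$ and residue field $K$; one more application of Lemma~\ref{uinvdvr} then gives $u(F) \ge 2 u(K) = 2^{m+d+1}$, matching the upper bound.

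For part (c), the upper bound again comes from (a). For the lower bound, pick any normal projective model $X$ of $F$ over $K$ and any closed point $P \in X$ (such a point exists by projectivity). The local ring $\OO_{X,P}$ is a DVR with fraction field $F$ and residue field $K_P$, a finite extension of $K$. A routine induction on $m$ shows that $K_P$ is itself $m$-local with residue field $k_P$ a finite extension of $k$: at each level, the integral closure of the excellent henselian DVR in the finite extension of its fraction field is again an excellent henselian DVR whose residue field is a finite extension of the original. By hypothesis $u(k_P) = 2^d$, so part (b) applied to $K_P$ gives $u(K_P) = 2^{m+d}$, and a final application of Lemma~\ref{uinvdvr} yields $u(F) \ge 2 u(K_P) = 2^{m+d+1}$. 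Since Corollary~\ref{hensel} and Lemma~\ref{uinvdvr} already do the substantive work, the proof is essentially bookkeeping; the only point requiring care is the stability of the $m$-local structure under finite extensions in part (c), which is standard but worth noting explicitly.
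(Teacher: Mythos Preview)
Your proof is correct and follows essentially the same approach as the paper's: both iterate Corollary~\ref{hensel} and Lemma~\ref{uinvdvr} along the henselian tower for the upper and lower bounds, and in part~(c) both pass to a closed point on a normal model and apply part~(b) to the resulting finite extension of $K$. The only slight omission is that in part~(c), when invoking part~(b) for $K_P$, you should explicitly note that $k_P$ is again a $C_d$-field (as a finite extension of $k$); you already cited this stability fact in part~(a), so this is a trivial addition.
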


\begin{proof}
(\ref{mlocalineq}) By the discussion preceding this result, 
$u_s(K) = 2^m u_s(k)$ and
$u(F) \le 2^{m+1}u_s(k)$.  But $u_s(k) \le 2^d$ since $k$ is a $C_d$-field.  So the conclusion follows.

(\ref{mlocalcdK}) Since $k$ is a $C_d$-field with $u(k)=2^d$, we have that $u(k) \le u_s(k) \le 2^d$ and so in fact all three quantities are equal.  Applying Lemma~\ref{uinvdvr} and induction yields that $u(K)
\ge 2^m u(k) = 2^{d+m}$.  But $u(K) \le u_s(K) \le 2^{d+m}$ by (\ref{mlocalineq}).  So all these quantities are equal too, proving the first assertion.

Now let $X$ be a normal $K$-curve with function field $F$ and let $\xi$ be a $K$-point on $X$.  The local ring at $\xi$ has fraction field $F$ and residue field $K$.  So Lemma~\ref{uinvdvr} implies that $u(F) \ge 2 u(K) =2^{d+m+1}$.  The reverse inequality follows from part~(\ref{mlocalineq}).

(\ref{mlocalcdF}) 
The inequality $u(F) \le 2^{d+m+1}$ is given in part~(\ref{mlocalineq}).  To show the reverse inequality, choose a normal (or equivalently, regular) $K$-curve $X$ having function field $F$, and choose a closed point $\xi$ on $X$.  Let $R$ be the local ring of $X$ at $\xi$, with residue field $\kappa(\xi)$.  Then the fraction field of $R$ is $F$, and $\kappa(\xi)$ is a finite extension of $K$.  Hence $\kappa(\xi)$ is an $m$-local field whose residue field $k'$ is a finite extension of $k$.  By hypothesis, $u(k')=2^d$; and $k'$ is a $C_d$-field since $k$ is.  So applying part~(\ref{mlocalcdK}) to $k'$ and $\kappa(\xi)$, we find that $u(\kappa(\xi)) = 2^{d+m}$.  Lemma~\ref{uinvdvr} now implies that $u(F) \ge 2^{d+m+1}$.
\end{proof}

For example, if $k$ is a field of transcendence degree $d$ over an algebraically closed field of characteristic unequal to $2$, then $k$ is a $C_d$-field (theorem of Tsen-Lang, see \cite{Serre:CG}, II.4.5(b)).  So $u(F) \le 2^{d+m+1}$ for any one-variable function field $F$ over an $m$-local field with residue field $k$, by Corollary~\ref{cor_mlocal}(\ref{mlocalineq}).  This was known in the special case that $F$ is a one-variable function field over $k((t))$.  Namely, in that situation, $k((t))$ is a $C_{d+1}$-field by Theorem~2 of \cite{Gre}; so $F$ is a $C_{d+2}$-field by the theorem of Tsen-Lang cited above and hence $u(F) \le 2^{d+2}$.

As a special case of Corollary~\ref{cor_mlocal}(\ref{mlocalcdK}), the $u$-invariant of $K(x)$ equals $2^{d+m+1}$ if $K$ is an $m$-local field whose residue field is $C_d$, has $u$-invariant $2^d$, and does not have characteristic~$2$.

\begin{cor}\label{finac}
Let $F$ be a one-variable function field over an $m$-local field whose residue field $k$ has characteristic unequal to $2$.
\renewcommand{\theenumi}{\alph{enumi}}
\begin{enumerate}
\item  \label{mlocalac}
If $k$ is algebraically closed, then $u(F)=2^{m+1}$.
\item \label{mlocalfinite}
If $k$ is a finite field, then $u(F)=2^{m+2}$.
\end{enumerate}
\end{cor}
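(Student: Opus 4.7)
The plan is to deduce both parts directly from Corollary~\ref{cor_mlocal}(\ref{mlocalcdF}) by identifying, in each case, a value of $d$ for which $k$ is a $C_d$-field, and verifying that every finite extension $k'/k$ satisfies $u(k')=2^d$.

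For part~(\ref{mlocalac}), I take $d=0$. An algebraically closed field is a $C_0$-field by definition (every nonconstant homogeneous form has a nontrivial zero), and its only finite extension is itself, which has $u$-invariant $1 = 2^0$. Thus the hypotheses of Corollary~\ref{cor_mlocal}(\ref{mlocalcdF}) are met with $d=0$, yielding $u(F) = 2^{0 + m + 1} = 2^{m+1}$.

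For part~(\ref{mlocalfinite}), I take $d=1$. Every finite field is a $C_1$-field by the Chevalley--Warning theorem, and every finite extension of a finite field is again a finite field. Since finite fields of odd characteristic have $u$-invariant exactly $2 = 2^1$ (a standard consequence of the fact that every nondegenerate ternary quadratic form over such a field is isotropic), the hypotheses of Corollary~\ref{cor_mlocal}(\ref{mlocalcdF}) hold with $d=1$, giving $u(F) = 2^{1+m+1} = 2^{m+2}$.

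In both cases, no further work is needed beyond invoking the classical facts above and the already-established Corollary~\ref{cor_mlocal}. There is no real obstacle here: the main result (Theorem~\ref{main}) and its henselian version (Corollary~\ref{hensel}) have already done the heavy lifting, and the argument reduces to checking two elementary statements about $u$-invariants of algebraically closed and finite fields together with their $C_d$-property.
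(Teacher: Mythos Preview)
Your proposal is correct and follows essentially the same route as the paper: both parts are deduced from Corollary~\ref{cor_mlocal}(\ref{mlocalcdF}) by checking that algebraically closed fields are $C_0$ with $u=1$ and finite fields are $C_1$ with $u=2$, and that these properties persist under finite extensions. The only cosmetic difference is that the paper justifies $u(k)=2$ for a finite field via the anisotropic form $x^2-cy^2$ for $c$ a non-square, whereas you invoke isotropy of ternary forms; both are standard.
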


\begin{proof}
(\ref{mlocalac}) This is a special case of Corollary~\ref{cor_mlocal}(\ref{mlocalcdF}), using that an algebraically closed field $k$ is $C_0$, satisfies $u(k)=1$, and has no non-trivial finite extensions.

(\ref{mlocalfinite}) A finite field $k$ is $C_1$ (by \cite{Serre:CG}, II.3.3(a)), and so $u(k)\le 2$ by the comment
before Corollary~\ref{cor_mlocal}.  But $u(k) \ne 1$ since the form $x^2-cy^2$ is anisotropic for any non-square $c \in k$ (using $\cha k \ne 2$).  Hence $u(k)=2$.  Since these properties hold for all finite fields of characteristic not~$2$, the assertion is again a special 
case of Corollary~\ref{cor_mlocal}(\ref{mlocalcdF}).
\end{proof}

{From} Corollary~\ref{finac}, we immediately obtain the following, which in the case of $\mbb Q_p$ was recently shown by Parimala and Suresh (\cite{PaSu}, Theorem~4.6):

\begin{cor}[\label{cor_pasu}]
Let $p$ be an odd prime, and let $K$ be a finite extension of $\Q_p$ or of the field of algebraic $p$-adic numbers 
(i.e.\ the algebraic closure of $\Q$ in $\Q_p$).  If $F$ is a function field in one variable over $K$, then $u(F) = 8$.
\end{cor}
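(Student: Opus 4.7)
The plan is to deduce this as a direct special case of Corollary~\ref{finac}(\ref{mlocalfinite}) with $m=1$. The task reduces to verifying that in both scenarios $K$ is a $1$-local field whose residue field is finite of odd characteristic; once that is checked, the formula $u(F) = 2^{m+2} = 2^{1+2} = 8$ applies immediately to any one-variable function field $F$ over $K$.

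First I would handle the case where $K/\Q_p$ is finite. The ring $\Z_p$ is a complete (hence excellent henselian) discrete valuation ring with residue field $\F_p$, so $\Q_p$ is $1$-local with finite residue field of characteristic $p$. By the remark in the paragraph preceding Corollary~\ref{cor_mlocal}, any finite extension $K$ of a $1$-local field is again $1$-local, with residue field a finite extension of $\F_p$; this residue field is still finite and of characteristic $p \ne 2$.

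For the second case, where $K$ is a finite extension of $\Q \cap \Q_p$, the key observation is that $\Q \cap \Q_p$ is the fraction field of the henselization of $\Z_{(p)}$ at its maximal ideal. This henselization is an excellent henselian discrete valuation ring (henselization preserves excellence, see for instance \cite{EGAIV4}) whose residue field is again $\F_p$. Hence $\Q \cap \Q_p$ is a $1$-local field in the sense of the paper, and as in the previous case its finite extension $K$ remains $1$-local with finite residue field of odd characteristic $p$.

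In both cases Corollary~\ref{finac}(\ref{mlocalfinite}) applies and yields $u(F)=8$ for every one-variable function field $F$ over $K$. There is no real obstacle here beyond the verification that ``algebraic $p$-adic numbers'' fits the $1$-local framework; the substantive content of Corollary~\ref{cor_pasu} is already contained in Corollary~\ref{finac}, which in turn rests on the main theorem (Theorem~\ref{main}) and its henselian extension (Corollary~\ref{hensel}).
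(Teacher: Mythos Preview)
Your proof is correct and follows exactly the paper's approach: both derive the result as the $m=1$ case of Corollary~\ref{finac}(\ref{mlocalfinite}), and your added justification that the field of algebraic $p$-adic numbers is the fraction field of an excellent henselian DVR (the henselization of $\Z_{(p)}$) is a helpful elaboration that the paper leaves implicit. One small notational slip: you twice write $\Q \cap \Q_p$ where you mean $\overline{\Q} \cap \Q_p$.
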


\begin{proof}
This is the case of Corollary~\ref{finac}(\ref{mlocalfinite}) with $k$ a finite field and $m=1$, taking the $1$-local field to be a $p$-adic field.
\end{proof}
Note that the above corollary shows that $u(F)\leq 8$ even if $K$ is not a finite extension but merely algebraic.

As another example of Corollary~\ref{finac}(\ref{mlocalfinite}),
let $K = \mbb Q_p((t))$ with $p$ odd, and let $F$ be a one-variable function field over $K$.  
Then $K$ is $2$-local with finite residue field, and so $u(F)=16$.

We conclude this section by proving an analog of Theorem~\ref{main} for function fields of 
patches.  This is done by means of the following lemma. We adhere to Notation~\ref{notation}.

\begin{lem}\label{reduce_to_patches}
Let $\wh X$ be a smooth connected projective curve over a complete discrete valuation ring~$T$ and let $F$ be its function field.  
Let $n \ge 0$ and assume that the
residue characteristic of $T$ does not divide $n$.  Let $U$ be a subset of the closed fiber $X$ and let $P$ be a closed point of $\wh X$.  If $a \in F_U^\times$ (resp.\ $a \in F_P^\times$) then there exists an $a' \in F$ and a unit $u \in F_U^\times$ (resp.\ $u \in F_P^\times$) such that $a=a'u^n$.\end{lem}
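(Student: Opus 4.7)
The plan is to establish that $F^\times\cdot(\wh R_P^\times)^n = F_P^\times$, and analogously for $F_U$; this is equivalent to the required decomposition. First I reduce to the case $a \in \wh R_P\setminus\{0\}$ (by writing $a$ as a ratio of elements of $\wh R_P$) and then, multiplying by a power of $t \in F^\times$, to the case where $a$ is not divisible by $t$. Since $\wh X$ is smooth at $P$, the ring $\wh R_P$ is a $2$-dimensional complete regular local ring; by Cohen's structure theorem, $\wh R_P \cong \Lambda[[\pi_P]]$, where $\Lambda$ is a complete DVR with uniformizer $t$ and residue field $\kappa(P)$, and $\pi_P \in R_P$ is a local parameter of $X_0$ at $P$. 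Weierstrass preparation then yields $a = u\cdot W$ with $u \in \wh R_P^\times$ and $W \in \Lambda[\pi_P]$ a distinguished polynomial.

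The unit $u$ is dealt with via Hensel's lemma. Since $n$ is coprime to the residue characteristic (hence to $\cha\kappa(P)$), applying Hensel to $y^n-(1+\epsilon)$ gives $1 + \mf m_P\subseteq(\wh R_P^\times)^n$, so the reduction map induces an isomorphism $\wh R_P^\times/(\wh R_P^\times)^n \cong \kappa(P)^\times/(\kappa(P)^\times)^n$. The surjection $R_P^\times \twoheadrightarrow \kappa(P)^\times$ then lets me find $u' \in R_P^\times\subseteq F^\times$ with $u/u' \in 1 + \mf m_P$, giving $u = u'v^n$ for some $v\in\wh R_P^\times$. This completes the argument when $a$ is a unit in $\wh R_P$.

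The main obstacle is the distinguished polynomial $W$, whose coefficients lie in $\Lambda$ and need not be in $R_P$ when $\kappa(P)\supsetneq k$. My goal is to find $W' \in R_P\subseteq F$ and $v\in\wh R_P^\times$ with $W = W'\cdot v^n$; equivalently, $W'$ should realize the divisor $\div_{\wh R_P}(W)$ up to $n$-th powers of units. I propose to use the Artin approximation theorem, applicable since $R_P$ is excellent: the system ``$W = Y\cdot Z^n$,'' with $Y$ encoded by the $R_P$-coefficients of a monic polynomial in $\pi_P$ and $Z \in \wh R_P^\times$, admits the formal solution $(W,1)$, and Artin approximation produces a genuine solution $(W', v)$ with $W' \in R_P$. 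The principal difficulty is in arranging the approximation so that $W'$ has the same divisor as $W$ on $\Spec\wh R_P$ (not merely being close to $W$ coefficient-wise): this is achieved by combining Artin approximation with a Hensel refinement, leveraging the fact that the formal roots of $W$ in $\ov{F_P}$ are algebraic over $F$ and that their $\Gal(\Lambda/\wh T)$-conjugates split in $\wh R_P$ into further primes and units according to whether their residue matches the specific branch selected at $P$. Once $W = W'v^n$ is secured, combining with the unit step yields $a = (u'W')\cdot (vw)^n$ with $u'W' \in F^\times$ and $vw \in \wh R_P^\times$. The $F_U$ case is treated analogously, using the structure of $\wh R_U$ as the $t$-adic completion of a semilocal ring and applying the Weierstrass/Hensel/Artin machinery at each closed point of $U$.
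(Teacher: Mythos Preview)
Your treatment of the unit case via Hensel's lemma is correct and is exactly what the paper does. The gap is in your handling of the distinguished polynomial $W$.

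The Artin approximation theorem applies to a polynomial system whose coefficients lie in the base ring $R_P$ (or its henselization): given a solution in $\wh R_P$, it produces one in $R_P^h$. But your system ``$W=Y\cdot Z^n$'' carries $W$ as data, and $W$ lies in $\Lambda[\pi_P]\subset\wh R_P$; when $\kappa(P)\supsetneq k$ the Cohen ring $\Lambda$ is not contained in $R_P$, so this is not a system over $R_P$ and Artin approximation does not apply. (A secondary point: $R_P$ is excellent but not henselian, so even the hypotheses on the base ring are not quite right.) You acknowledge the difficulty and offer a sketch involving Galois conjugates and Hensel refinement, but this is not a proof; it remains entirely unclear how one would show that an arbitrary height-one prime of $\wh R_P$ --- in particular one cut out by a distinguished polynomial with transcendental $\Lambda$-coefficients --- is represented modulo $n$-th powers by an element of $F$.

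The paper sidesteps this entirely. It invokes Weierstrass Preparation theorems from \cite{HH:FP} (Propositions~4.7 and~5.6) that are specific to the patching setup and are \emph{not} the classical ``distinguished polynomial times unit over a Cohen ring'' that you use. They assert directly that every nonzero element of $\wh R_U$ (resp.\ $\wh R_P$) factors as $b\cdot c$ with $b\in F^\times$ (resp.\ $b\in F_{\{P\}}^\times$) and $c$ a \emph{unit} in the completed ring. This reduces everything to the unit case, handled by Hensel as you do; for $F_P$ one first lands in $F_{\{P\}}$ via Prop.~5.6, then applies the $F_U$ case with $U=\{P\}$ to descend to $F$. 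These preparation results are the substantive input, and they do the work that your Artin step cannot.

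Your remark on the $F_U$ case is also too loose: the subset $U\subset X$ may be infinite (e.g.\ the complement of finitely many points), so $R_U$ is not semilocal and one cannot work ``at each closed point of $U$''. Again the correct tool is the global preparation result \cite{HH:FP}, Proposition~4.7.
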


\begin{proof}
First consider the case that $a \in F_U^\times$.  Since $F_U$ is the fraction field of $\wh R_U$, we may write $a = a_1/a_2$ where $a_1,a_2 \in \wh R_U$ and $a_i \ne~0$.  By the 
Weierstrass Preparation Theorem for $\wh R_U$ given in 
\cite{HH:FP}, Proposition~4.7, the nonzero element $a_i \in \wh R_U$ may be
written as a product $a_i=b_ic_i$ with $b_i \in F^\times$ and $c_i \in \wh
R_U^\times$ for $i=1,2$.  Let $t$ be a uniformizer of $T$.  Then the reduction
of $c_i$ modulo $t$ is an element $\bar c_i \in \wh R_U/t \wh R_U$, the ring
of rational functions on $X$ that are regular at the points of $U$.  But this
ring is also $R_U/tR_U$.  So we may lift $\bar c_i$ to an element $c_i' \in
R_U \subset F$.  Here $c_i/c_i' \in \wh R_U^\times$, and in fact $c_i/c_i'
\equiv 1 \, {\rm mod}\, t\wh R_U$.  Now the residue characteristic of $T$
does not divide $n$, and $1$ is an $n$-th root of $c_i/c_i'$  modulo
$t$.  Hence $c_i/c_i'$ has a (non-zero) $n$-th root $c_i'' \in \wh R_U$
by Hensel's Lemma.  Thus $u := c_1''/c_2'' $ lies in $F_U^\times$, and $a' :=
b_1c_1'/b_2c_2'$ lies in $F$.  Since $a_i = b_ic_i'(c_i'')^n$, we have
$a = a_1/a_2 = a'u^n$.  This proves the result in this case.

Next consider the case that $a \in F_P^\times$.  Taking $U = \{P\}$ in the previous case, we are reduced to showing that every element $a \in F_P^\times$ is of the form $a=a'u^n$ where $a' \in F_{\{P\}}^\times$ and $u \in F_P^\times$ (because $F_{\{P\}} \subset F_P$).  By the local Weierstrass Preparation Theorem for $\wh R_P$ given in \cite{HH:FP}, 
Proposition~5.6, we may write $a_i=b_ic_i$ for some $b_i \in F_{\{P\}}^\times$ and $c_i \in \wh R_P^\times$.  
(As noted in the proof of Theorem~\ref{smooth factorization}(\ref{local}), our rings $\hat R_{\{P\}}$ and $\wh R_P$
correspond to $\wh R$ and $\wh R_1$ in \cite{HH:FP}, Section~5.)
Let $\mf m$ be the maximal ideal of $\wh R_{\{P\}}$ and let $\mf m'$ be the maximal ideal of $\wh R_P$.  So $\mf m' = \mf m \wh R_P$.  Let $\bar c_i \in \wh R_P^\times/\mf m'$ be the reduction of $c_i$ modulo $\mf m'$.  The inclusion $\wh R_{\{P\}} \hookrightarrow \wh R_P$ induces an isomorphism on the residue fields $\wh R_{\{P\}}/\mf m \wh R_{\{P\}} \to \wh R_P/\mf m' \wh R_P$; so we can regard $\bar c_i \in \wh R_{\{P\}}/\mf m \wh R_{\{P\}}$, and we can lift it to an element $c_i' \in \wh R_{\{P\}} \subset F_{\{P\}}$.  Here $c_i' \ne 0$ since $\bar c_i \ne 0$ (because $c_i \in \wh R_P^\times$).  So $c_i/c_i' \in \wh R_P^\times$ is congruent to $1$ modulo $t\wh R_P$, and so by Hensel's Lemma is an $n$-th power of some non-zero $c_i'' \in \wh R_P$.  Taking $a' = b_1c_1'/b_2c_2' \in F_{\{P\}}^\times$ and $u := c_1''/c_2'' \in \wh R_P \subset F_P$ with $u \ne 0$ then yields the desired identity $a=a'u^n$.
\end{proof}

As a consequence of this lemma and Theorem~\ref{main}, we obtain:

\begin{cor}\label{main_patches} 
Let $T$ be a complete discrete valuation ring with uniformizer $t$, whose residue field $k$ is not of
characteristic $2$.  Let $\wh X$ be a smooth projective $T$-curve with closed fiber $X$, and let $\xi$ be a proper subset of $X$
(resp.\ a closed point of $X$).  Then $4u(\kappa(Q)) \le u(F_\xi) \le 4u_s(k)$ for any closed point $Q \in X$ (resp.\ for $Q=\xi$).
\end{cor}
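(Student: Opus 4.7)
The plan is to prove the two inequalities separately, with essentially disjoint strategies. The upper bound $u(F_\xi)\le 4u_s(k)$ will follow from Theorem~\ref{main} once forms over $F_\xi$ are reduced to forms over $F$ by means of Lemma~\ref{reduce_to_patches}. The lower bound $4u(\kappa(Q))\le u(F_\xi)$ will be obtained by exhibiting a chain of two discrete valuation rings connecting $\kappa(Q)$ to $F_\xi$ and applying Lemma~\ref{uinvdvr} twice.

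For the upper bound, I would take a quadratic form $q$ over $F_\xi$ with $\dim q = n > 4u_s(k)$ and show it is isotropic. Since $\cha k\ne 2$, diagonalize $q = \langle a_1,\ldots,a_n\rangle$ with $a_i \in F_\xi^\times$. Applying Lemma~\ref{reduce_to_patches} with exponent $2$ (coprime to $\cha k$), write each $a_i = a_i' u_i^2$ with $a_i' \in F^\times$ and $u_i \in F_\xi^\times$. Then $q$ is isometric over $F_\xi$ to $q' := \langle a_1',\ldots,a_n'\rangle$, which is defined over $F$. Setting $K:=\Quo(T)$, Theorem~\ref{main} gives $u_s(K)=2u_s(k)$; since $F$ is finitely generated of transcendence degree one over $K$, the definition of $u_s$ yields $u(F) \le 2u_s(K) = 4u_s(k) < n$. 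Thus $q'$ is isotropic over $F$, and so $q$ is isotropic over $F_\xi$.

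For the lower bound, in the closed-point case ($\xi = P$, $Q = P$), the two-dimensional regular complete local ring $\wh R_P$ has $(t)$ as a height-one prime, with quotient $\wh R_P/t\wh R_P$ a complete DVR with residue field $\kappa(P)$. Hence the localization $(\wh R_P)_{(t)}$ is itself a DVR with fraction field $F_P$ and residue field $K_1 := \Quo(\wh R_P/t\wh R_P)$, and two applications of Lemma~\ref{uinvdvr} give
\[u(F_P)\ge 2u(K_1)\ge 4u(\kappa(P)).\]
In the subset case ($\xi = U$, with $Q$ a closed point on the irreducible component $X_0$ of $X$ containing $U$), the $t$-adically complete ring $\wh R_U$ satisfies $\wh R_U/t\wh R_U = R_U/tR_U$, a subring of $\kappa(X_0)$ and hence a domain. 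Therefore $(\wh R_U)_{(t)}$ is a DVR with fraction field $F_U$ and residue field $\kappa(X_0)$; pairing it with the DVR $\OO_{X_0,Q}$ (fraction field $\kappa(X_0)$, residue field $\kappa(Q)$) and applying Lemma~\ref{uinvdvr} twice yields
\[u(F_U)\ge 2u(\kappa(X_0))\ge 4u(\kappa(Q)).\]

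The key conceptual step is the upper bound reduction: Lemma~\ref{reduce_to_patches} allows the diagonal entries of $q$ to be replaced by elements of $F$ up to \emph{squares}, preserving the isometry class, which is exactly what makes the global statement of Theorem~\ref{main} applicable to forms defined only on a patch. The remaining technical verifications — that $(t)$ has height one in both $\wh R_P$ and $\wh R_U$ and that the indicated localizations are DVRs with the asserted residue fields — are routine consequences of the regularity of $\wh X$ (from smoothness over $T$) together with the basic structure of $\wh R_P$ and $\wh R_U$ recalled in Notation~\ref{notation}.
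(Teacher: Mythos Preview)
Your proposal is correct and follows essentially the same approach as the paper: the upper bound via Lemma~\ref{reduce_to_patches} to descend diagonal entries to $F$ and then Theorem~\ref{main}, and the lower bound via two successive applications of Lemma~\ref{uinvdvr} along the chain $\kappa(Q)\rightsquigarrow$ (function field of the closed fiber, or its completion at $P$) $\rightsquigarrow F_\xi$. The only cosmetic difference is that the paper phrases the height-one prime as ``the ideal $I$ defining the closed fiber'' rather than $(t)$; since $\wh X$ is smooth these coincide.
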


\begin{proof}
Let $K$ be the fraction field of $T$ and let $E, F$ be the function fields of $X, \wh X$.  Thus $F$ is a one-variable function field over $K$.  Let $k' = \kappa(Q)$, and let $I \subset \wh R_\xi$ be the ideal that defines the closed fiber $X$ locally. 

For the first inequality, consider the case when $\xi=U \subset X$.  The local ring $A$ of $X$ at $Q$ is a discrete valuation ring having residue field $k'$ and fraction field $E$.  Also, the localization of $\wh R_U$ at the prime ideal $I$ is a discrete valuation ring having residue field $E$ and fraction field $F_U$.  Applying 
Lemma~\ref{uinvdvr} to these two rings yields $u(F_U) \ge 2u(E) \ge 4u(k')$, as asserted.  In the other case, when $\xi = P \in X$ (in which case $Q=P$), if we replace the ring $A$ by its completion $\wh A$, the field $E$ by the fraction field $\wh E$ of $\wh A$, and $\wh R_U, F_U$ by $\wh R_P, F_P$, then Lemma~\ref{uinvdvr} similarly yields $u(F_P) \ge 2u(\wh E) \ge 4u(k')$.  

For the second inequality, let $q$ be a quadratic form over $F_\xi$ of dimension $n>4u_s(k)$.  We wish to show that $q$ is isotropic.  Since the characteristic of $k$ and hence of $F_\xi$ is not $2$, the form $q$ is isometric to a diagonal form $a_1x_1^2+ \cdots a_nx_n^2$ with $a_i \in F_\xi$.  By Lemma~\ref{reduce_to_patches}, $a_i = a_i'u_i^2$ for some $a_i' \in F$ and $u_i \in F_\xi^\times$.  So after rescaling $x_i$ by a factor of $u_i$, we obtain a form $q' = a_1'x_1^2+ \cdots +a_n'x_n^2$ that is isometric to $q$, with $a_i' \in F$.  The 
dimension of the $F$-form $q'$ is greater than $2u_s(K)$, since $u_s(K)=2u_s(k)$ by Theorem~\ref{main}.  Therefore $q'$ is isotropic over $F$ and hence over $F_\xi$.  Thus so is $q$.
\end{proof}

\begin{cor} \label{main_patchesacfi}
Under the hypotheses of Corollary~\ref{main_patches}, if $k$ is 
algebraically closed (resp.\ finite), then $u(F_\xi)= 4$ (resp.\ $8$).
\end{cor}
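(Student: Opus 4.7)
The plan is to observe that the two-sided bound established in Corollary~\ref{main_patches}, namely $4u(\kappa(Q)) \le u(F_\xi) \le 4u_s(k)$, becomes an equality in both situations under consideration, essentially because $u$ and $u_s$ coincide for algebraically closed and for finite fields. Everything reduces to computing $u(\kappa(Q))$ and $u_s(k)$ in the two cases, and noting that $\kappa(Q)$ is automatically a finite extension of $k$ whenever $Q$ is a closed point of the $k$-curve $X$, so is of the same ``type'' (algebraically closed or finite) as $k$ itself.

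First, suppose $k$ is algebraically closed of characteristic unequal to $2$. Then every finite extension of $k$ equals $k$, and in particular $u(k)=1$ since every binary form is isotropic. The argument of Corollary~\ref{finac}(\ref{mlocalac}) (or more directly, the Tsen--Lang bound recalled there) gives $u_s(k)=1$. Since any closed point $Q$ of $X$ has residue field finite over $k$, hence equal to $k$, one has $u(\kappa(Q))=1$. Feeding this into Corollary~\ref{main_patches} yields $4 \le u(F_\xi) \le 4$.

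Next, suppose $k$ is finite of characteristic unequal to $2$. Then $k$ is a $C_1$-field, so $u(k)\le 2$; and choosing a non-square $c \in k^\times$ (which exists because $\cha k \ne 2$) shows that $\langle 1,-c\rangle$ is anisotropic over $k$, giving $u(k)=2$. Every finite extension of $k$ is again a finite field of characteristic unequal to $2$, and the same reasoning shows it has $u$-invariant $2$, so $u_s(k)=2$, exactly as in Corollary~\ref{finac}(\ref{mlocalfinite}). Likewise $\kappa(Q)$ is a finite field with $u(\kappa(Q))=2$ for any closed point $Q$ of $X$. Thus Corollary~\ref{main_patches} gives $8 \le u(F_\xi) \le 8$.

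There is no real obstacle here: the whole work was already done in Corollary~\ref{main_patches} (via Theorem~\ref{main} and Lemma~\ref{reduce_to_patches} on the upper side, and Lemma~\ref{uinvdvr} applied to the two nested discrete valuations on the lower side). The only minor point to check is that in both cases the lower-bound residue field $\kappa(Q)$ has the same $u$-invariant as $k$, which is immediate from the fact that being algebraically closed and being a finite field of characteristic not~$2$ are both preserved under finite field extensions.
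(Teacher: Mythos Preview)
Your proof is correct and follows essentially the same route as the paper's: apply the two-sided bound of Corollary~\ref{main_patches} and compute $u(\kappa(Q))$ and $u_s(k)$ in each case, using that $\kappa(Q)$ is a finite extension of $k$ and that algebraically closed (resp.\ finite) fields are $C_0$ (resp.\ $C_1$). One small point of presentation: in the finite case, knowing that every finite extension of $k$ has $u$-invariant $2$ does not by itself give $u_s(k)=2$; the upper bound $u_s(k)\le 2$ comes from the $C_1$ property (as recalled before Corollary~\ref{cor_mlocal}), which you do mention but should invoke explicitly at that step.
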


\begin{proof}  Let $k' = \kappa(Q)$.   
In the algebraically closed case the result follows from Corollary~\ref{main_patches} since $k'=k$ and $u(k)=u_s(k)=1$.  In the
finite case, $k'$ is also finite, and both $k$ and $k'$ are $C_1$-fields with $u$-invariant equal to $2$ (as noted in the proof of Corollary~\ref{finac}(\ref{mlocalfinite})).  Moreover $u_s(k)=2$ since $u(k) \le u_s(k) \le 2$ for a $C_1$-field.  So the result again follows from  Corollary~\ref{main_patches}. 
\end{proof}

For example, if $k$ is algebraically closed (resp.\ finite), then the fraction fields of $k[[x,t]]$ and $k[x][[t]]$ each have $u$-invariant equal to $4$ (resp.\ $8$).  This follows by taking $\wh X = \PP^1_{k[[t]]}$ and taking $\xi$ equal to the affine line or one point.  Similarly, taking $\wh X = \PP^1_{\Z_p}$ with $p\ne 2$, we obtain that the fraction field of $\Z_p[[x]]$ has $u$-invariant $8$, as does the fraction field of the $p$-adic completion of $\Z_p[x]$.  The above corollary can also be applied to other smooth projective curves; but by restricting attention to the line we may weaken the above hypotheses on $k$:  

\begin{cor} \label{laurent}
Let $T$ be a complete discrete valuation ring with uniformizer $t$, whose
residue field $k$ has characteristic unequal to~$2$ and satisfies $u(k)=u_s(k)$.  Then the fraction fields of $T[[x]]$ and of the $t$-adic completion of $T[x]$ have 
$u$-invariant equal to $4u(k)$.  
\end{cor}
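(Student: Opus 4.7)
The plan is to realize each of the two fraction fields in the statement as an $F_\xi$ for a suitable $\xi$ with $\wh X = \PP^1_T$, and then read off the result from Corollary~\ref{main_patches} together with the hypothesis $u(k) = u_s(k)$.

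Concretely, take $\wh X = \PP^1_T$, so that the closed fiber is $X = \PP^1_k$. Let $P$ denote the origin of $\mbb A^1_k \subset X$, viewed as a closed point of $\wh X$; it is $k$-rational, so $\kappa(P) = k$. A direct identification of the $(t,x)$-adic completion of the local ring of $\wh X$ at $P$ exhibits $\wh R_P$ as $T[[x]]$, so that $F_P$ is the fraction field of $T[[x]]$. Similarly, letting $U = \mbb A^1_k$ be a proper dense subset of $X$, the ring $\wh R_U$ is by construction the $t$-adic completion of $T[x]$, and hence $F_U$ is its fraction field. Both identifications are exactly those already used (in the case $T = k[[t]]$) in the example preceding this corollary, and they go through for general $T$ without change.

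Corollary~\ref{main_patches} applied in each case yields
\[4\,u(\kappa(Q)) \le u(F_\xi) \le 4\,u_s(k),\]
with $Q = P$ in the case $\xi = P$ and with $Q$ any closed point of $X$ in the case $\xi = U$. In both cases, choose $Q$ to be a $k$-rational point of $\PP^1_k$ (for instance $P$ itself), so that $\kappa(Q) = k$. Since $u(k) = u_s(k)$ by hypothesis, the two ends of the displayed chain coincide, forcing $u(F_\xi) = 4\,u(k)$ for each of the two fraction fields in the statement.

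There is no real obstacle here, and nothing new to prove: all of the substantive work --- the local-global principle for homogeneous spaces, the local isotropy computations of Proposition~\ref{u-inv patches}, and the reduction from arbitrary coefficients to coefficients in $F$ via Lemma~\ref{reduce_to_patches} --- has already been absorbed into Corollary~\ref{main_patches}. The present corollary is essentially the observation that $\PP^1_T$ carries a $k$-rational closed point on its closed fiber, so that the lower bound $4\,u(\kappa(Q))$ provided by that corollary can be taken to equal $4\,u(k)$, matching the upper bound $4\,u_s(k) = 4\,u(k)$ given by the hypothesis.
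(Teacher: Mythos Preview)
Your argument is correct and is essentially identical to the paper's own proof: both take $\wh X=\PP^1_T$, identify $F_P$ and $F_U$ with the two fraction fields in question (for $P$ the origin and $U=\A^1_k$), and then apply Corollary~\ref{main_patches} with $Q$ the $k$-rational point $x=t=0$ so that the lower bound $4u(\kappa(Q))=4u(k)$ matches the upper bound $4u_s(k)=4u(k)$.
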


\begin{proof}
This is immediate from Corollary~\ref{main_patches}, by taking
$\wh X = \PP^{\,1}_T$; taking $U = \A^1_T$ and $P$ to be the point $x=t=0$ in the respective cases; and taking $Q$ to be the rational point $x=t=0$ in both cases.
\end{proof}

In particular, if $k$ is any field with $u(k)=u_s(k)$, the field $k((x,t))$ has $u$-invariant equal to $4u(k)$.  
For example, if $k$ is a $C_d$-field with $u(k)=2^d$, then $k((x,t))$ has $u$-invariant equal to $2^{d+2}$.  This is because $2^d = u(k) \le u_s(k) \le 2^d$, using that $k$ is $C_d$.   
(Here, as above, we assume $\operatorname{char}(k)\neq 2$.)

\section{Central simple algebras} \label{csa}

This section contains our results on central simple algebras. 
As in the previous section, we use Theorem~\ref{torsor_injective} 
to reduce to a local problem. For basic notions concerning central simple algebras,
we refer the reader to~\cite{Sal:LN} and~\cite{Pie}. In particular, we recall that the index of a
central simple $F$-algebra~$A$ can be characterized as the degree of a minimal
\textit{splitting field} for $A$, i.e.\ a field extension $E/F$ such
that $A$ splits over $E$ in the sense that $A\otimes_F E$ is a matrix
algebra over $F$. 

The notion of a central simple algebra over a field generalizes
to that of an \textit{Azumaya algebra} over a commutative ring; see \cite{Sal:LN}, Chapter~2, or \cite{Grothbrauer}, Part~I, Section~1.  
If $A$ is an Azumaya algebra of degree $n$ over a domain
$R$, and $1 \leq i < n$, there is a functorially associated smooth
projective $R$-scheme $\SB_i(A)$, called the $i$-th \textit{generalized
Severi-Brauer variety} of $A$ (see
\cite{vdB}, p.~334, and \cite{See:BS}, Theorem~3.6; their notation is a
bit different).  For each $R$-algebra $S$, the $S$-points of $\SB_i(A)$
are in bijection with the right ideals of $A_S := A \otimes_R S$ that
are direct summands of the $S$-module $A_S$ having dimension (i.e.\
$S$-rank) $ni$.
If $R$ is a field $F$, so that $A$ is a central simple $F$-algebra,
and if $E/F$ is a field extension, then
$\SB_i(A)(E) \neq \varnothing$ if
and only if $\ind(A_E)$ divides $i$ (\cite{BofInv}, Proposition~1.17).
Here $A_E \cong \Mat_m(\Delta)$ for some $E$-division algebra $\Delta$
and some $m \ge 1$, and
the right ideals of $E$-dimension $ni$ are in natural bijection with the
subspaces of $\Delta^m$ of $\Delta$-dimension $i/\ind(A_E)$
(\cite{BofInv}, Proposition~1.12, Definition~1.9).  Thus, writing $D$
for the $F$-division algebra in the class of $A$,
the $F$-linear algebraic group $\GL_1(A) = \GL_m(D)$ acts transitively
on the points of the $F$-scheme $\SB_i(A)$ (recall the definition given
prior to Theorem~\ref{torsor_injective}).

We now place ourselves in the context of Section~\ref{rational factorization}. 

\begin{thm} \label{index hasse}
Under Notation~\ref{notation} and~\ref{P1 notation}, 
let $A$ be a central simple $F$-algebra. Then $\ind(A) = \lcm\limits_{\xi \in
\mc P \cup \mc U} \ind(A_{F_\xi})$.
\end{thm}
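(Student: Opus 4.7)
The plan is to show the two divisibilities separately, with the nontrivial direction reducing immediately to the local-global principle for homogeneous spaces (Theorem~\ref{torsor_injective}).

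For one direction, since $F \subseteq F_\xi$ for every $\xi \in \mc P \cup \mc U$, the index $\ind(A_{F_\xi})$ always divides $\ind(A)$ (an index can only decrease under base change, by \cite{Pie}, Proposition~13.4). Hence the least common multiple $i := \lcm_{\xi \in \mc P \cup \mc U} \ind(A_{F_\xi})$ divides $\ind(A)$.

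For the reverse direction, I would set up the generalized Severi-Brauer variety $H := \SB_i(A)$, using the characterization recalled just before the statement: for any field extension $E/F$, we have $H(E) \neq \varnothing$ if and only if $\ind(A_E) \mid i$. By the very choice of $i$, we have $\ind(A_{F_\xi}) \mid i$ for every $\xi \in \mc P \cup \mc U$, so $H(F_\xi) \neq \varnothing$ for each such $\xi$. It then remains to lift this to an $F$-point, and this is precisely where Theorem~\ref{torsor_injective} applies: take $G = \GL_1(A)$, which is a Zariski open subset of the affine $F$-space underlying $A$ (hence rational and connected), and which acts transitively on the points of $H$ by the discussion immediately preceding the theorem (right ideals of the given reduced dimension correspond to subspaces of $\Delta^m$ of a fixed dimension, on which $\GL_m(\Delta) = \GL_1(A)$ acts transitively over every field extension). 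Theorem~\ref{torsor_injective} then yields $H(F) \neq \varnothing$, which is exactly the statement that $\ind(A) \mid i$.

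Combining the two divisibilities gives $\ind(A) = i$, as desired. I do not expect any serious obstacle: the main point is recognizing that the setup of Section~\ref{rational factorization} was tailored precisely to turn index computations into point-existence questions on Severi-Brauer varieties, and that $\GL_1(A)$ is the appropriate rational group acting transitively on $\SB_i(A)$. The only thing one should be slightly careful about is the case $i \geq n = \deg(A)$, where $\SB_i(A)$ is not defined in the usual range $1 \le i < n$; but this case is vacuous since then $i$ is already a multiple of $\ind(A) \mid n$, so the divisibility $\ind(A) \mid i$ holds automatically without invoking the local-global principle.
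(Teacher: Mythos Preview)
Your argument is correct and follows essentially the same route as the paper: apply Theorem~\ref{torsor_injective} to the rational connected group $\GL_1(A)$ acting transitively on the generalized Severi--Brauer variety $\SB_i(A)$, using the characterization $\SB_i(A)(E)\ne\varnothing \Leftrightarrow \ind(A_E)\mid i$. The only cosmetic difference is that the paper establishes the equivalence $\ind(A)\mid i \Leftrightarrow \ind(A_{F_\xi})\mid i$ for all $1\le i<n$ in one stroke, whereas you treat the easy divisibility separately and then apply the local-global principle at the single value $i=\lcm_\xi \ind(A_{F_\xi})$; your handling of the boundary case $i=n$ (noting $i\mid\ind(A)\mid n$, so $i\le n$, and if $i=n$ then $\ind(A)=n=i$) is fine.
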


\begin{proof}
Let $n$ be the degree of $A$, and let $D$ be the $F$-division algebra in the class of $A$.  Then $\GL_1(A) = \GL_m(D)$ is a Zariski open subset of $\mbb A_F^{n^2}$ (because multiplication in $D$ is given by polynomials over $F$); so it is a rational connected linear algebraic group.
As noted above, if $1 \leq i < n$ then 
$\GL_1(A)$ acts transitively on the points of
$\SB_i(A)$; and if $E$ is a field extension of $F$, then $\SB_i(A)(E) \neq \varnothing$ if and only
if $\ind(A_E)$ divides $i$.  
So Theorem~\ref{torsor_injective}
implies that $\ind(A) | i$ if and only if
$\ind(A_{F_\xi}) | i$ for each $\xi \in \mc P \cup \mc U$.  Thus $\ind(A) =
\lcm\limits_{\xi \in \mc P \cup \mc U} \ind(A_{F_\xi})$ as claimed.
\end{proof}

Before proving our results about the period-index problem for central simple algebras, we recall the notion of \textit{ramification} for such algebras.
Consider an integrally closed Noetherian domain $R$ with fraction field $E$, the function field of $Y = \Spec R$.  For a codimension one irreducible subvariety 
$Z \subset Y$ with function field $\kappa(Z)$, and
an integer $n$ not divisible by the characteristic of $\kappa(Z)$, there is a canonically defined \textit{ramification map} (or \textit{residue map})
\[\oper{ram}_Z : \Br(E)[n] \to H^1(\kappa(Z), \mbb Z/n\mbb Z)\]
on the $n$-torsion part of the Brauer group (see \cite{COP}, \S2, or
\cite{Sal:LN}, pp.~67-68; here we identify $\mbb Z/n\mbb Z$ with
$\frac{1}{n}\mbb Z/\mbb Z\subseteq \mbb Q/\mbb Z$).  An element
of $H^1(\kappa(Z), \mbb Z/n\mbb Z)$ determines a cyclic Galois field extension $L/\kappa(Z)$ with a specified generator $\sigma$ of
$\Gal(L/\kappa(Z))$ whose order divides~$n$. 
For a given class $\alpha \in \Br(E)[n]$ there are only
finitely many codimension one subvarieties $Z \subset Y$ for which $\oper{ram}_Z(\alpha)$
is nonzero. We call the reduced closed subscheme supported on the union of
these varieties $Z$ the \textit{ramification divisor} of $\alpha$ (or of an algebra in its class).
By \cite{Sal:LN}, Theorem~10.3, and \cite{Grothbrauer}, Part~II, Proposition~2.3, if $R$ is regular of dimension at most~$2$ 
and $n$ is prime to the characteristics of all the residue fields
$\kappa(Z)$, then
\begin{equation}\label{star}\tag{$*$}
\xymatrix{
0 \to \Br(R)[n] \ar[r] & \Br(E)[n] \ar[rr]^-{\oplus_Z
\oper{ram}_Z} & & \bigoplus_Z H^1(\kappa(Z), \mbb Z/n\mbb Z)}
\end{equation}
is an exact sequence of abelian groups.  An $n$-torsion element of $\Br(E)$ is \textit{unramified} if its ramification divisor is trivial; i.e.\ if its image under $\oplus_Z \oper{ram}_Z$ is zero.  By the exact sequence~(\ref{star}), this is equivalent to saying that this element of $\Br(E)$ is induced by an $n$-torsion element of $\Br(R)$.

Recall (from the introduction) that we say that a field $k$ is \textit{separably closed away from $p$} if its absolute Galois group is a pro-$p$ group.  By \cite{SSS}, III.1, Proposition~16, this is equivalent to the condition that $\cd_q(k) = 0$ for all primes $q \ne p$.  By \cite{Serre:CG}, II.4.1, Proposition~11, if $q \ne \cha(k)$ the condition $\cd_q(k) = 0$ implies that $\cd_q(K) = d$ for any function field $K$ of transcendence degree $d$ over $k$.  This in turn implies that there is no non-trivial prime-to-$\cha(k)$ torsion in $\Br(K)$, for any finitely generated field $K$ over $k$ of transcendence degree $\le 1$, by applying \cite{Serre:CG}, II.2.3, Proposition~4, to such a field $K$, and using that $\Br(K) = H^2(K,{\mathbb G}_m)$.  Recall also that the \textit{Brauer dimension} of $k$ 
(resp.\ \textit{away from $p$}) is defined to be $0$ if $k$ is separably closed (away from $p$), and that otherwise it is the smallest positive integer $d$ such that for every finitely generated field extension $E/k$ of
transcendence degree $\ell \leq 1$, and every central simple $E$-algebra $A$ 
(resp.\ with $p {\not |} \per(A)$), we have $\ind(A) | \per(A)^{d+\ell - 1}$.

\begin{prop} \label{csa patches}
Let $T$ be a complete discrete valuation ring with residue field $k$, let 
$\wh X$ be a regular projective $T$-curve with function field $F$ and let $X$ be its closed fiber.    
Let $A$ be a central simple $F$-algebra whose period $n$ is not divisible by $\cha(k)$.  
Let $d \ge 0$.  Suppose that $k$ has Brauer dimension at most $d$
away from $\cha(k)$.  Under Notation~\ref{notation} we have the following:
\renewcommand{\theenumi}{\alph{enumi}}
\begin{enumerate}
\item \label{big_local_csa}
Let $X_0$ be an irreducible component of $X$.
Then 
$\ind(A_{F_U})$ divides $n^{d + 1}$ for some Zariski dense affine open subset $U \subset X_0$.
\item \label{small_local_csa}
Let $P$ be a closed point of $X$, and assume that the ramification  divisor of $A$ has at most a normal crossing singularity at $P$.  If the period $q$ of $A_{F_P}$ is a prime number unequal to  $\cha(k)$, and $F_P$ contains a primitive $q$-th root of unity, then $\ind(A_{F_P})$ divides~$q^{d+1}$.
\end{enumerate}
\end{prop}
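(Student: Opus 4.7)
The plan is to mimic the proof of Proposition~\ref{u-inv patches}, with generalized Severi--Brauer schemes $\SB_i(\mc A)$ for Azumaya algebras~$\mc A$ replacing the affine quadric, while using the ramification exact sequence~(\ref{star}) to isolate the unramified part of $[A]$ from explicit symbol algebras carrying the ramification. The lifting of sections of the smooth projective $\SB_i(\mc A)$ from the residue field to the $t$-adically complete base proceeds by the smooth-projective version of Lemma~\ref{affine_hensel} indicated in the remark following that lemma.

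For part~(a), let $D$ denote the ramification divisor of $A$ on~$\wh X$. If $X_0 \not\subseteq D$, then after shrinking $U \subset X_0$ to avoid the finitely many points where other components of~$D$ meet~$X_0$, the class $A_{F_U}$ is unramified on the regular 2-dimensional ring~$\wh R_U$, so by~(\ref{star}) it extends to an Azumaya algebra~$\mc A$ over~$\wh R_U$. Its reduction $\bar{\mc A}$ at the generic point of the closed fiber is a CSA over $\kappa(X_0)$ of period dividing~$n$; since $\kappa(X_0)$ has transcendence degree one over~$k$, the Brauer-dimension hypothesis gives $\ind(\bar{\mc A}) \mid n^d$. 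The resulting $\kappa(X_0)$-point of $\SB_{n^d}(\mc A)$ spreads (after one more shrinking of~$U$) to a section over~$U$, which then lifts by Hensel to a $\wh R_U$-point, yielding $\ind(A_{F_U}) \mid n^d \le n^{d+1}$. If instead $X_0 \subseteq D$, the residue $\oper{ram}_{X_0}(A)$ corresponds to a cyclic extension $L/\kappa(X_0)$ of some degree $m \mid n$. After further shrinking~$U$ and lifting a monic separable minimal polynomial for~$L$ over~$\wh R_U$, we obtain an étale extension $\til R/\wh R_U$ of degree~$m$ with residue extension~$L$ at the generic point; over $\Frac \til R$ the ramification of~$A$ along~$X_0$ dies, reducing to the previous case. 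Hence $\ind(A_{F_U}) \mid m \cdot n^d \le n^{d+1}$.

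For part~(b), the normal-crossings hypothesis ensures that at most $c \le 2$ branches $D_1, \ldots, D_c$ of~$D$ pass through~$P$, with local equations $\pi_i \in \wh R_P$ that extend to a regular system of parameters. Using the Kummer identification (available since $\zeta_q \in F_P$), each ramification $\oper{ram}_{D_i}(A) \in H^1(\kappa(D_i), \mbb Z/q\mbb Z) \cong \kappa(D_i)^\times/(\kappa(D_i)^\times)^q$ is lifted to a unit $a_i \in \wh R_P^\times$, chosen to also be a unit along each~$D_j$ for $j \ne i$. The degree-$q$ symbol algebra $(a_i, \pi_i)$ then has the same ramification as~$A$ at~$D_i$ and is unramified elsewhere, so letting $A'$ denote the Brauer class $[A] - \sum_i [(a_i, \pi_i)]$ in~$\Br(F_P)$, $A'$ is unramified throughout~$\Spec \wh R_P$. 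By~(\ref{star}), $A'$ extends to an Azumaya algebra over the henselian local ring~$\wh R_P$, whose residue~$\bar{A'}$ is a CSA over~$\kappa(P)$ of period dividing~$q$. The Brauer-dimension hypothesis applied to the finite extension~$\kappa(P)$ of~$k$ gives $\ind(\bar{A'}) \mid q^{d-1}$ when $d \ge 1$, while $\bar{A'}$ is trivial when $d = 0$ (as $\kappa(P)$ is then separably closed away from~$\cha(k)$). Lifting by Hensel on the Severi--Brauer scheme of the Azumaya extension of~$A'$ yields the same divisibility for $\ind(A'_{F_P})$, and combined with $\ind((a_i, \pi_i)) = q$ we obtain $\ind(A_{F_P}) \mid q^{d-1+c} \le q^{d+1}$ whenever $d \ge 1$ and $c \le 2$. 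In the remaining case $d = 0, \, c = 2$, the injection $\Br(\wh R_P)[q] \hookrightarrow \Br(\kappa(P))[q] = 0$ forces $A'$ to be trivial, so $[A]$ equals $[(a_1, \pi_1)] + [(a_2, \pi_2)]$ in the $q$-torsion of~$\Br(F_P)$; the structural description of $\Br(F_P)[q]$ for the 2-dimensional (effectively) strictly henselian regular local ring~$\wh R_P$ containing~$\zeta_q$ makes this group cyclic of order dividing~$q$, giving $\ind(A_{F_P}) \mid q = q^{d+1}$.

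The main technical difficulty lies in part~(b): choosing the lifts~$a_i$ so that each symbol $(a_i, \pi_i)$ carries exactly the prescribed ramification at~$D_i$ and is unramified elsewhere (requiring both the Kummer hypothesis $\zeta_q \in F_P$ and careful unit choices), and then dispatching the edge case $d = 0, \, c = 2$ via the structural input on $\Br(F_P)[q]$. A secondary technical point, present in both parts, is the Hensel-style lifting of a residue-field section of the smooth projective scheme~$\SB_i(\mc A)$ to the completed base~$\wh R_U$ or~$\wh R_P$, which relies on the projective variant of Lemma~\ref{affine_hensel}.
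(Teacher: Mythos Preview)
Your approach to part~(\ref{big_local_csa}) is correct and close in spirit to the paper's, though you kill the ramification along $X_0$ by passing to an \'etale cover rather than by subtracting a cyclic algebra $(\wh L,\wh\sigma,t_0)$ as the paper does; both yield the same bound.

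Part~(\ref{small_local_csa}), however, has a genuine gap. Your construction requires lifting each residue class $\ram_{D_i}(A)\in \kappa(D_i)^\times/(\kappa(D_i)^\times)^q$ to a unit $a_i\in \wh R_P^\times$. But $\kappa(D_i)$ is itself a complete discretely valued field with residue field $\kappa(P)$, and the image of $\wh R_P^\times$ in $\kappa(D_i)^\times/(\kappa(D_i)^\times)^q$ consists only of the unit classes there. When $\ram_{D_i}(A)$ has nontrivial valuation component at $P$ --- for instance already for $A=(\pi_1,\pi_2)_q$, where $\ram_{D_1}(A)$ is the class of $\pi_2$, a uniformizer of $\kappa(D_1)$ --- no such unit lift exists. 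This is exactly the ``linked ramification'' phenomenon. One can repair the argument by allowing $a_1=\pi_2^{\,e}u$ and then using the reciprocity constraint from the Bloch--Ogus/Kato complex at $P$ to see that the leftover ramification along $D_2$ becomes a unit class; but this is essentially a rederivation of the special case of Saltman's theorem \cite{Sal:cyclic}, Theorem~2.1, which the paper simply cites. That theorem gives directly $A_{F_P}\sim B\otimes C$ with $C$ unramified and $B$ a product of at most two symbols, where the two--symbol case forces one of the associated cyclic extensions to be unramified at $P$ (hence trivial when $d=0$).

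Separately, your handling of the edge case $d=0,\ c=2$ is incorrect: $\Br(F_P)[q]$ is \emph{not} cyclic even when $\kappa(P)$ is separably closed. For $\wh R_P=\kappa[[x,y]]$ with $\kappa$ algebraically closed, the symbols $(x,y)_q$ and $(x+y,y)_q$ have distinct ramification loci and are therefore independent nonzero classes. The correct reason $\ind(A_{F_P})\mid q$ in this case is again Saltman's structural result (or the reciprocity argument sketched above), not a cyclicity statement.
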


\begin{proof}
(\ref{big_local_csa})
As in the proof of Proposition~\ref{u-inv patches}(\ref{big_local}), 
there is an affine Zariski open neighborhood $\Spec R \subset \wh X$ of the generic point of $X_0$ whose
closed fiber $U$ is an affine open subset of $X_0$ along which $X$ is regular, and such that the defining ideal of $U$ in 
$\Spec R$ is principal, say with generator $t_0 \in R \subset F$.
Let $D$ be the ramification divisor of $A$ in $\wh X$.  After shrinking $U$, 
we may assume that the ramification divisor of $A$ on $\Spec \wh R_U$ is either trivial or is the divisor of $t_0$ and that 
$\ram_U([A])$ corresponds to an \'etale
cyclic Galois cover $U' \to U$ with Galois generator $\sigma$. 
By \cite{SGA}, I, Corollaire~8.4, we may lift $U' \to U$ to obtain an \'etale 
Galois cover $\wh U' \to \Spec(\wh R_U)$, which necessarily has the same (cyclic) Galois group.  Let $\wh L/F_U$ be the
corresponding cyclic field extension and $\wh \sigma$ the lift of $\sigma$ to $\wh
L$.  Let $B$ be the cyclic $F_U$-algebra $(\wh L, \wh \sigma,
t_0)$, of degree dividing $n$ (see, for example, \cite{Sal:LN}, p.7).  
Thus $B$ is unramified away from $t_0$ on $\wh R_U$; and 
it follows from \cite{Sal:LN}, Lemma~10.2, 
that the cyclic cover of $U$ and Galois generator that are associated to $B$ agree with those associated to $A$ (i.e.\ $U'$ and $\sigma$). 
Let $C = A_{F_U} \otimes_{F_U} B^{\op}$, where $B^{\op}$ is the opposite algebra.  Notice that 
the period of $C$ divides $n$ since those of $A_{F_U}$ and $B^{\op}$ do.
Since $[B^{\op}]=-[B]$ and the ramification map is a group homomorphism, 
the central simple algebra $C$ is unramified over $\wh R_U$.

Suppose  that $\ind(C_{F_V}) | n^{d}$ for some dense open subset $V \subseteq U$. 
Since $A_{F_V}$ is Brauer equivalent to $(C \otimes_{F_U}
B)_{F_V}$, and since $\ind(B) | \deg(B)| n$,
it would then follow that $\ind(A_{F_V}) | \ind(C_{F_V}) \ind(B) |
n^{d+1}$.  So to complete the proof of (\ref{big_local_csa}) it suffices to show that $\ind(C_{F_V}) | n^{d}$ for some $V$.

Since the class of $C$ in the Brauer group is unramified over $F_U$,
the exact sequence of ramification~(\ref{star}) yields an Azumaya algebra $\mc C$ over $\wh R_U$ 
with $\per(\mc C)=\per(C)$ and 
such that $\mc C_{F_U}$ is Brauer equivalent to $C$.
Since $\per(C)$ divides $n$, the central simple algebra $\mc C_{\kappa(U)}$ has period dividing $n$ 
(here $\kappa (U)$ is the function field of $U$). By assumption on the residue field $k$, 
 $\ind(\mc C_{\kappa(U)}) | \per(\mc C_{\kappa(U)})^d|n^d=:i$ for $d>0$. In fact, the same holds if 
$d=0$ since in that case, $\per(\mc C_{\kappa(U)})=1$ by the comments before the proposition (using $\cha(k)\, {\not |}\, n$).

Let $m$ be the degree of $\mc C$ over $\wh R_U$. By tensoring $\mc C$ with a matrix algebra, we may
assume that $m > i$. We may therefore consider the $i$-th generalized Severi-Brauer $\wh R_U$-scheme $\SB_i(\mc C)$.
As noted before the statement of Theorem~\ref{index hasse}, the fact that $\ind(\mc C_{\kappa(U)}) |i$
implies the existence of a $\kappa(U)$-rational point on 
$\SB_i(\mc C_{\kappa(U)})$; or equivalently on $\SB_i(\mc C)$, by functoriality of $\SB_i$.
Hence the morphism $\pi:\SB_i(\mc C) \to \Spec \wh R_U$ has a
section $\Spec(\kappa(U)) \to \SB_i(\mc C)$ over $\Spec(\kappa(U))$,
the generic point of the closed fiber $U$ of $\Spec(\wh R_U)$. Choose a Zariski dense
open subset $V \subseteq U$ such that this section over $\Spec(\kappa(U))$ extends to a section over $V$, and such that the image of this latter section lies in an open subset of 
$\SB_i(\mc C)$ that is affine over $\wh R_U$. Then by
Lemma~\ref{affine_hensel}, the section over $V $ lifts to a section over $\Spec(\wh R_V)$.  Thus we obtain an $F_V$-point of $\SB_i(\mc C)$; or equivalently, of $\SB_i(\mc C_{F_V})$. 
Consequently, the central simple $F_V$-algebra $\mc C_{F_V}$ has index dividing
$i=n^{d}$.   But $\mc C_{F_V}$ is Brauer equivalent to $C_{F_V}$, since 
$\mc C_{F_U}$ is Brauer equivalent to $C$.  Hence $\ind(C_{F_V})$
also divides $n^d$, as desired.

(\ref{small_local_csa})
By our assumptions, $\wh R_P$ is a complete regular local ring whose fraction field contains a primitive $q$-th root of unity; 
and $A_{F_P}$ is a central simple algebra whose period is $q$ and whose ramification divisor has at most a normal crossing at $P$.  
Therefore \cite{Sal:cyclic}, Theorem~2.1, applies. In particular, $A_{F_P}$ is Brauer equivalent to $B
\otimes C$, where the class of $C$ is unramified over $\wh R_P$ and the index of $B$
divides $q^2$.  Namely, the above-mentioned theorem asserts that $B$ is either a symbol algebra of index dividing $q$ or the product of at most 
two such symbol algebras, each of which determines a cyclic extension of the residue field at a branch of the ramification divisor at $P$.
That same theorem says that the case of two symbol algebras occurs only if the cyclic 
field extension associated to one of the symbols is unramified at $P$ (and is of degree prime to $\cha(k)$).  
If $d=0$, this cyclic extension would have trivial residue field extension at $P$ by the assumption on $k$, 
and would therefore be trivial.  So in fact the index of $B$ divides $q$ if $d=0$.  That is, in general the index of $B$ divides $q^{1+e}$, 
where $e=0$ if $d=0$ and $e=1$ if $d>0$.
 
As in the proof of part (\ref{big_local_csa}),
we may find an Azumaya algebra $\mc C$ over $\wh R_P$ such that $\mc C_{F_P}$ is Brauer equivalent to $C$. By tensoring
with a matrix algebra of suitable size, we may assume that the degree
of $\mc C$ is greater than $q^{d-e}$ (with $e$ as above).  By the hypothesis on $k$, the
algebra $\mc C_k$ has index dividing $i := q^{d-e}$ (again using the comments before the proposition, in the case $d=0$, to get 
$\per(\mc C_k)=1$ and hence $\ind(\mc C_k)=1$).
Thus we obtain a section
$\Spec k \to \SB_i(\mc C)$ of $\SB_i(\mc C) \to \Spec \wh R_P$ over $\Spec k$
whose image lies in (the closed fiber of) an affine open subset of $\SB_i(\mc C)$. 
Since $\SB_i(\mc C) \to \Spec \wh R_P $ is smooth
and $\wh R_P$ is complete with residue field $k$, we may apply Lemma~\ref{affine_hensel} 
to this affine open subset and obtain a section $\Spec \wh R_P \to \SB_i(\mc C)$. This in turn gives an
$F_P$-point of $\SB_i(\mc C)$, or equivalently an $F_P$-point of
$\SB_i(\mc C_{F_P})$. In particular, we find that the index of $\mc C_{F_P}$
divides $i=q^{d-e}$. But $\mc C_{F_P}$ is Brauer equivalent to $C$. Since $A \cong B
\otimes C$ we therefore find \[\ind(A) | \ind(B) \ind(C) | q^{1+e} q^{d-e} = q^{d+1}\]
as desired.  \end{proof}

Before using the above proposition to show our main result on Brauer dimension (Theorem~\ref{main_csa}), we prove two lemmas.

\begin{lem} \label{unram}
Let $K$ be a complete discretely valued field, and 
suppose that $\alpha \in \Br(K)$ has period $n$, prime to the residue characteristic of $K$.  
Let $L$ be a totally ramified extension of $K$ of degree $n$.  
Then $\alpha_L \in \Br(L)$ is unramified.  
\end{lem}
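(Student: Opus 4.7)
The plan is to use the ramification/residue map for the Brauer group of a complete discretely valued field, together with its standard compatibility with restriction. Since $n$ is prime to the residue characteristic, there is an exact sequence
\[0 \to \Br(\OO_K)[n] \to \Br(K)[n] \xrightarrow{\partial_K} H^1(k,\mathbb Z/n\mathbb Z) \to 0,\]
where $k$ is the residue field of $K$; and $\alpha_L$ is unramified precisely when $\partial_L(\alpha_L) = 0$ in $H^1(\ell, \mathbb Z/n\mathbb Z)$, where $\ell$ is the residue field of $L$. So the goal reduces to computing $\partial_L(\alpha_L)$.

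The key step is the compatibility formula for the residue map under a finite extension of complete discretely valued fields of residue characteristic prime to $n$, namely
\[\partial_L \circ \res_{L/K} \;=\; e(L/K)\cdot \bigl(\res_{\ell/k} \circ \partial_K\bigr),\]
where $e(L/K)$ denotes the ramification index. This is standard (it can be extracted from Serre's \textit{Corps Locaux}, or from the cohomological description of $\partial$). Since $L/K$ is totally ramified of degree $n$, we have $e(L/K)=n$, and so
\[\partial_L(\alpha_L) \;=\; n\cdot \res_{\ell/k}\bigl(\partial_K(\alpha)\bigr).\]
But $\partial_K(\alpha)$ lies in $H^1(k,\mathbb Z/n\mathbb Z)$, which is $n$-torsion; hence this expression vanishes, and $\alpha_L$ is unramified.

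If one prefers a more hands-on route that avoids directly citing the compatibility formula, the plan would instead be: choose a cyclic unramified extension $K'/K$ that splits $\partial_K(\alpha)$ (such a $K'$ exists, of degree dividing $n$, since $\partial_K(\alpha)$ has order dividing $n$), and write $\alpha$ as $\alpha_0 + (\chi,\pi_K)$ where $\alpha_0$ is unramified and $\chi$ corresponds to $\partial_K(\alpha)$ (viewing $\chi$ as a character of $\Gal(K^{\mathrm{ur}}/K)$). Since $L/K$ is totally ramified, the residue field of $L$ equals $k$, so $\chi$ extends to a character $\chi_L$ of $\Gal(L^{\mathrm{ur}}/L)$; and since $e(L/K)=n$, we may write $\pi_K = u\pi_L^n$ for some unit $u\in \OO_L^\times$. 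Then
\[(\chi_L,\pi_K)_L \;=\; (\chi_L,u)_L + n\cdot (\chi_L,\pi_L)_L \;=\; (\chi_L,u)_L\]
in $\Br(L)$, since $\chi_L$ has order dividing $n$. The class $(\chi_L,u)_L$ is unramified because $u$ is a unit, and $\alpha_0$ was unramified to begin with, so $\alpha_L$ is unramified.

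The main obstacle either way is the bookkeeping around the ramification map: for the first approach one needs the compatibility $\partial_L\circ\res = e\cdot\res\circ\partial_K$, and for the second approach one needs the explicit description of $\partial$ on cyclic algebras $(\chi,b) \mapsto v_K(b)\cdot\chi|_{\Gal(\bar k/k)}$, together with the fact that $\Br(K)[n]$ decomposes as $\Br(\OO_K)[n]$ plus the classes $(\chi,\pi_K)$. Both are standard and either can be quoted.
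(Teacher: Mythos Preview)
Your first approach is correct and is essentially identical to the paper's proof: the paper cites \cite{Sal:LN}, Theorem~10.4, for precisely the compatibility $\partial_L \circ \res = e(L/K)\cdot(\res_{\ell/k}\circ\partial_K)$ (phrased as a commutative square with $H^1(k,\mbb Q/\mbb Z)$ and multiplication by $n$ on the right, using that $\ell = k$), and then concludes exactly as you do. Your alternative hands-on route via the decomposition $\alpha = \alpha_0 + (\chi,\pi_K)$ is also fine but not used in the paper.
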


\begin{proof}
Let $k$ be the common residue field of $K$ and $L$.
By \cite{Sal:LN}, Theorem~10.4, the ramification maps for $K$ and $L$ (with respect to the maximal ideals of the corresponding complete discrete valuation rings) form a commutative diagram
\[\xymatrix{
\Br(K) \ar[d]_{\oper{res}} \ar[r]^-{\oper{ram}} & H^1(k, \mbb Q /\mbb Z)
\ar[d]_{n} \\
\Br(L) \ar[r]^-{\oper{ram}} & H^1(k, \mbb Q/\mbb Z),
}\]
where the left hand vertical map is induced by restriction (in Galois cohomology),
and the right hand vertical map is induced by multiplication by $n$.  
Since $\alpha$ has order $n$ in the group $\Br(K)$, its image in the lower right hand $H^1(k, \mbb Q/\mbb Z)$ is zero.  
Hence $\alpha_L \in \Br(L)$ is unramified.
\end{proof}

\begin{lem} \label{resindex}
Suppose $K$ is a complete discretely valued field with residue field $k$ and
valuation ring $T$. Let $\alpha \in \Br(T)$. Then $\ind(\alpha_K) =
\ind(\alpha_k)$.
\end{lem}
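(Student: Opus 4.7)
The plan is to pick an Azumaya algebra $\mathcal{A}$ over $T$ representing $\alpha$ (of some degree $n$, possibly after tensoring with a matrix algebra), so that $\mathcal{A}_K$ represents $\alpha_K$ and $\mathcal{A}_k$ represents $\alpha_k$. I would then prove the two divisibilities $\ind(\alpha_K) \mid \ind(\alpha_k)$ and $\ind(\alpha_k) \mid \ind(\alpha_K)$ separately.

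For the direction $\ind(\alpha_K) \mid \ind(\alpha_k)$, set $i = \ind(\alpha_k)$ and let $\ell/k$ be a minimal splitting field for $\alpha_k$, which is separable of degree $i$. Write $\ell = k[x]/(\bar f)$ and lift $\bar f$ to a monic $f \in T[x]$. Then $\mathcal{O}_L := T[x]/(f)$ is a complete discrete valuation ring, unramified of degree $i$ over $T$, with residue field $\ell$ and fraction field $L$. The Azumaya algebra $\mathcal{A}_{\mathcal{O}_L}$ has split closed fibre $\mathcal{A}_\ell$, so by Hensel's lemma (lifting a complete system of matrix units from $\mathcal{A}_\ell \cong \Mat_n(\ell)$ to $\mathcal{A}_{\mathcal{O}_L}$) the algebra $\mathcal{A}_{\mathcal{O}_L}$ is itself isomorphic to $\Mat_n(\mathcal{O}_L)$; in particular $\alpha_L = 0$, so $L$ splits $\alpha_K$. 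Therefore $\ind(\alpha_K) \mid [L:K] = i$.

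For the reverse direction $\ind(\alpha_k) \mid \ind(\alpha_K)$, set $i = \ind(\alpha_K)$, and after replacing $\mathcal{A}$ by $\mathcal{A}\otimes_T \Mat_r(T)$ for some $r$ we may assume $i < \deg \mathcal{A}$, so that the generalized Severi--Brauer scheme $\SB_i(\mathcal{A})$ is defined and smooth projective over $T$. As recalled in the discussion preceding Theorem~\ref{index hasse}, one has $\SB_i(\mathcal{A})(K) \neq \varnothing$ because $\ind(\alpha_K) \mid i$. Since $\SB_i(\mathcal{A}) \to \Spec T$ is proper, the valuative criterion extends any $K$-point to a $T$-point; reducing modulo the maximal ideal produces a $k$-point of the closed fibre $\SB_i(\mathcal{A}_k) = \SB_i(\mathcal{A})_k$, which forces $\ind(\alpha_k) \mid i$.

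The only delicate step is the first direction, specifically the claim that an Azumaya algebra over a complete (even henselian) local ring whose closed fibre is split is itself split. This is the standard consequence of Hensel's lemma applied to a complete set of matrix units (equivalently, of the fact that the natural map $\Br(T) \to \Br(k)$ is injective for $T$ henselian local), and is the one point where we genuinely use the completeness of $T$; the second direction only uses properness. Everything else is formal manipulation with indices and splitting fields.
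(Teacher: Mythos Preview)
Your argument is correct. The second direction (properness plus the valuative criterion to pass from a $K$-point of $\SB_i(\mathcal{A})$ to a $T$-point and then to a $k$-point) is exactly what the paper does. For the first direction, however, the paper stays with the Severi--Brauer scheme: since $\SB_i(\mathcal{A}) \to \Spec T$ is smooth and $T$ is complete local, any $k$-point of the closed fibre lifts to a $T$-section (formal smoothness / Hensel), and restricting to the generic fibre gives a $K$-point. Thus the paper obtains both divisibilities uniformly from the single diagram of Severi--Brauer schemes, using properness for one implication and smoothness for the other.

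Your route for $\ind(\alpha_K)\mid\ind(\alpha_k)$ is instead to build an explicit unramified splitting extension: choose a \emph{separable} splitting field $\ell/k$ of degree $\ind(\alpha_k)$, lift it to an unramified $L/K$, and then invoke the fact that an Azumaya algebra over a henselian local ring with split closed fibre is itself split. This is perfectly valid; the existence of a separable splitting field of degree equal to the index is classical, and the splitting-lifts statement is the injectivity of $\Br(T)\to\Br(k)$ for henselian local $T$. Your approach is a bit more hands-on and makes the role of separability explicit, while the paper's is cleaner in that it treats both directions symmetrically via the same geometric object and never needs to name a splitting field.
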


\begin{proof}
Let $A$ be an Azumaya algebra in the class of $\alpha$, and let $n$ be the degree of $A$ over $T$ (which is also the degree of $A_K$ over $K$, and of $A_k$ over $k$).  For $1 \le i < n$, we have a commutative diagram of schemes
\[\xymatrix{
\SB_i(A_K) \ar[r] \ar[d]_{\pi_K} & \SB_i(A) \ar[d]_\pi & \SB_i(A_k) \ar[l]
\ar[d]_{\pi_k} \\
\Spec(K) \ar[r] & \Spec(T) & \ar[l] \Spec(k),
}\]
where $\SB_i$ is the $i$-th generalized Severi-Brauer variety.
Since $\pi$ is a proper morphism, by the valuative criterion for properness
it follows that any section of $\pi_K$ may be uniquely extended to a section of $\pi$. 
Since $\pi$ is a smooth morphism, it has a section if and only if $\pi_k$ does,
by Hensel's lemma.  This
implies that $\pi_k$ has a section if and only if $\pi_K$ has a section.  But
there is a $K$-point on $\SB_i(A_K)$ if and only if the index of $A_K$ divides $i$, and similarly for $k$.  So
$\ind(\alpha_K) | i$ if and only if $\ind(\alpha_k) | i$. Therefore $\ind(\alpha_k) =
\ind(\alpha_K)$ as desired. 
\end{proof}

\begin{thm} \label{main_csa}
Let $K$ be a complete discretely valued field whose valuation ring $T$
has residue field $k$.  
Suppose $k$ has Brauer dimension $d \ge 0$ away from $\cha(k)$. 
Then $K$ has Brauer dimension at most $d+1$ away from $\cha(k)$.
\end{thm}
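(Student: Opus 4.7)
The plan is to verify the two defining conditions for $K$ to have Brauer dimension at most $d+1$ away from $\cha(k)$: that $\ind(A)\mid\per(A)^{d}$ for every central simple algebra $A$ of period prime to $\cha(k)$ over a finite extension of $K$, and $\ind(A)\mid\per(A)^{d+1}$ for every such $A$ over a finitely generated extension of $K$ of transcendence degree one. Both assertions will first be reduced, by primary decomposition of the Brauer class, to the case $\per(A)=q^s$ for a single prime $q\ne\cha(k)$, and then by induction on $s$ to the base case $s=1$ of prime period. The inductive step uses a splitting field $E'/E$ of $A^{\otimes q^{s-1}}$, whose degree equals $\ind(A^{\otimes q^{s-1}})$ and is bounded by the base case (since $A^{\otimes q^{s-1}}$ has period $q$); over $E'$ the algebra $A$ has period dividing $q^{s-1}$, whence $\ind(A)\mid [E':E]\cdot \ind(A_{E'})$ is controlled by the inductive hypothesis applied over $E'$, which remains a finite (respectively transcendence degree one) extension of $K$.

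For the finite-extension base case, I would first adjoin $\zeta_q$, noting that $[E(\zeta_q):E]$ divides $q-1$ and is therefore coprime to the $q$-power $\ind(A)$, so the index is unchanged. The field $E$ is then complete discretely valued with residue field $k'$ finite over $k$. If the class $\alpha=[A]$ is unramified, then $\alpha\in\Br(T_E)\cong\Br(k')$, and Lemma~\ref{resindex} together with the Brauer dimension bound on $k$ yields $\ind(\alpha)\mid q^{d-1}$ (or $\ind(\alpha)=1$ when $d=0$, since $k'$ then has no prime-to-$\cha(k)$ Brauer torsion). If $\alpha$ is ramified, its ramification has order exactly $q$, so passing to the totally ramified Kummer extension $L=E(\pi^{1/q})$ of degree $q$, Lemma~\ref{unram} gives that $\alpha_L$ is unramified; hence $\ind(\alpha_L)\mid q^{d-1}$ by the unramified case, and therefore $\ind(\alpha)\mid q\cdot q^{d-1}=q^d$ as required.

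The transcendence degree one base case is the heart of the argument and parallels the proof of Theorem~\ref{main}. After adjoining $\zeta_q$, I would choose a normal projective $T$-model of $E$ and apply Lemma~\ref{resolution} to pass to a regular projective $T$-curve $\wh X$ on which the ramification divisor of $A$ has only normal crossings. For each irreducible component $X_0$ of the closed fiber $X$, Proposition~\ref{csa patches}(\ref{big_local_csa}) supplies a dense affine open $U_0\subset X_0$ with $\ind(A_{F_{U_0}})\mid q^{d+1}$. Using \cite{HH:FP}, Proposition~6.6, I would then pick a finite morphism $f:\wh X\to\PP^1_T$ with $\mc P:=f^{-1}(\infty)$ containing every point of $X$ lying in no $U_0$ together with every intersection point of components of $X$, placing us in Notation~\ref{P1 notation}. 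Each $U\in\mc U$ is then contained in some $U_0$, so $\ind(A_{F_U})\mid q^{d+1}$; and at each $P\in\mc P$, where $\zeta_q\in F_P$, $\per(A_{F_P})\mid q$, and the ramification has normal crossings, Proposition~\ref{csa patches}(\ref{small_local_csa}) (or triviality when $\per(A_{F_P})=1$) yields $\ind(A_{F_P})\mid q^{d+1}$. Theorem~\ref{index hasse} then gives $\ind(A)=\lcm_{\xi\in\mc P\cup\mc U}\ind(A_{F_\xi})\mid q^{d+1}$.

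The main obstacle will be the geometric setup in the transcendence degree one case: arranging a regular projective model of $E$ on which the full ramification divisor of $A$, including contributions along the closed fiber, acquires only normal crossings, so that Proposition~\ref{csa patches}(\ref{small_local_csa}) applies uniformly at every closed point of $\mc P$. A secondary technicality is the inductive step from prime period to prime power period, which forces one to prove the prime period base case uniformly across the whole class of finitely generated extensions of $K$ of the given transcendence degree before the induction can close, since the splitting fields produced at each inductive stage can vary within that class.
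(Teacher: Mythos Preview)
Your proposal is correct and follows essentially the same approach as the paper. The transcendence degree one case is handled identically: adjoin $\zeta_q$, pass to a regular model with normal crossings ramification via Lemma~\ref{resolution}, apply Proposition~\ref{csa patches} on components and at closed points, and conclude via Theorem~\ref{index hasse}; the induction from prime period to prime power period is likewise the same.

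The only difference is in the finite extension case. You reduce first to prime period via primary decomposition and induction, adjoin $\zeta_q$, and then split into unramified and ramified subcases, using the Kummer extension $E(\pi^{1/q})$ in the latter. The paper instead treats the $\ell=0$ case directly for the full period $n$, without primary decomposition or roots of unity: it simply passes to a totally ramified extension of degree $n$, invokes Lemma~\ref{unram} once, and applies Lemma~\ref{resindex} together with the Brauer dimension hypothesis on $k$. Your route is correct but slightly more elaborate than necessary here (note that $x^q-\pi$ is Eisenstein regardless of whether $\zeta_q\in E$, and Lemma~\ref{unram} does not require the extension to be Galois, so adjoining $\zeta_q$ is not actually needed for your argument either). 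The paper's version is more streamlined for $\ell=0$; yours has the minor aesthetic advantage of treating both transcendence degrees by the same reduction pattern.
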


\begin{proof}
Let $A$ be central simple algebra over a finitely generated field extension $F$ of $K$ having transcendence degree $\ell \le 1$, and assume that $p:=\cha(k) \ge 0$ does not divide $n := \per(A)$.  We wish to show that $\ind(A)$ divides $\per(A)^{d+\ell}$.  Let $\alpha \in \Br(F)$ be the class of $A$.

We begin by considering the case of $\ell = 0$; i.e., $F$ is a finite extension of $K$, whose residue field $k'$ is a finite extension of $k$.  If $d \ge 1$, let $L$ be a totally ramified extension of $F$ of degree $n = \per(\alpha)$.  
Thus $\alpha_L$ is unramified by Lemma~\ref{unram}.  Equivalently, by 
the exact sequence
(\ref{star}) before the statement of Proposition~\ref{csa patches}, $\alpha_L$ is induced by an element $\alpha_S$ in $\Br(S)$, where $S$ is the valuation ring of $L$.  By Lemma~\ref{resindex}, $\ind(\alpha_L) = \ind(\alpha_{k'})$, where $\alpha_{k'} \in \Br(k')$ is the class induced by $\alpha_S$. 
The hypothesis on $k$ implies that $\ind(\alpha_{k'})\, |\, \per(\alpha_{k'})^{d-1}$.  But $\ind(\alpha)\, |\, n \ind(\alpha_L)$, by \cite{Pie}, Proposition~13.4(v), since $n = [L:F]$.  Also, $\per(\alpha_{k'})\, |\, \per(\alpha_S) = \per(\alpha_L)$, since $\alpha_{k'}$ is induced by $\alpha_S$.  So $\ind(\alpha_L)\, |\,  \per(\alpha_L)^{d-1}$ and
\[\ind(\alpha)\, |\, n \ind(\alpha_L)\, |\, n \per(\alpha_L)^{d-1}\, |\, n \per(\alpha)^{d-1} = \per(\alpha)^d,\]
as desired.  

On the other hand, if $d=0$, then $k$ is separably closed away from $p$, and so has no cyclic field extensions of degree prime to $p$.  Thus $H^1(k,\mbb Z/n\mbb Z)$ is trivial and $\alpha$ is unramified.  So $\alpha$ is induced by an element $\alpha_R \in \Br(R)$, where $R$ is the valuation ring of $F$.  Let $\alpha_k$ be the induced element of $\Br(k)$.  Then $\ind(\alpha) = \ind(\alpha_k) = 1$ by Lemma~\ref{resindex} and the fact that $\Br(k)$ has no $n$-torsion (as noted before Proposition~\ref{csa patches}).  So $\ind(\alpha)\,|\,\per(\alpha)^d$ holds trivially.  This concludes the proof in the case $\ell = 0$.

We now turn to the case $\ell=1$; i.e., $F$ is a finitely generated field extension of $K$ having transcendence
degree one.  Write $n = \prod_{i=1}^m q_i^{r_i}$, where the $q_i$ are distinct primes unequal to $p$ and each $r_i \ge 1$.  Since $\alpha$ has order $n$ in the abelian group $\Br(F)$, we may write $\alpha = \alpha_1  + \cdots + \alpha_m$, where
$\alpha_i$ is $q_i$-power torsion.  Here $\per(\alpha) = \prod_i
\per(\alpha_i)$ because the $q_i$ are pairwise relatively prime.  Since the index of a tensor product of algebras
divides the product of the indices, it follows that $\ind(\alpha) | \prod_i \ind(\alpha_i)$; so without loss of generality, we may assume that $m=1$ and
that $\ind(\alpha)$ is a power of a prime $q$. Since $\per(\alpha) | \ind(\alpha)$, the period of $\alpha$ is also a power of $q$, say $n=q^r$.

Consider first the case $r = 1$, so that $\per(A) = q$. Since $\cha(F) \neq q$, the extension $F(\zeta_q)/F$, where $\zeta_q$ is
a primitive $q$-th root of unity, is an extension of $F$ of degree dividing $q-1$. Since
this is prime to $q$, we find $\ind(A) = \ind(A \otimes_{F} F(\zeta_q))$ and
$\per(A) = \per(A \otimes_{F} F(\zeta_q))$, by \cite{Pie},  Propositions~13.4(vi) and~14.4b(v).
Since $F(\zeta_q)$
is still a finitely generated extension of $K$ of transcendence degree $1$,
we may therefore assume without loss of generality that $\zeta_q \in F$. 
 
Observe (as in the proof of Theorem~\ref{main}) that there is a regular projective $T$-curve~$\wh X$ with function field $F$ such that the ramification 
divisor $D$ of $A$ on $\wh X$ has only normal crossings.  Namely,
let $\wh X_1$ be a normal projective model for $F$ over $T$, and let $D_1$ be the ramification divisor of $A$  on $\wh X_1$.  By Lemma~\ref{resolution}, there is a regular projective $T$-curve~$\wh X$ with function field $F$, and a birational morphism $\pi:\wh X \to \wh X_1$, such that $\pi^{-1}(D_1)$ has only normal crossings.  The
ramification divisor $D$ of $A$ on $\wh X$ is contained in $\pi^{-1}(D_1)$, and so it also has only normal crossings.

By Proposition~\ref{csa patches}(\ref{big_local_csa}), for each irreducible 
component $X_0$ of the closed fiber $X$ of $\wh X$, there is a Zariski dense affine open subset $U_0 \subset X_0$ such that 
$A_{F_{U_0}}$ has index dividing $q^{d+1}$. 
Let $S$ be the (finite) set of points of $X$ that do not lie in any of our chosen sets $U_0$ (as $X_0$ ranges over the components of $X$),
 together with all the closed points at which distinct components of $X$ meet.  By~\cite{HH:FP}, Proposition~6.6, there is a finite 
morphism
$f:\wh X \to \PP^1_T$ such that $S \subseteq \mc P := f^{-1}(\infty)$.  Under Notation~\ref{P1 notation}, and by the choice of $f$, 
each $U \in \mc U$
is contained in one of the above sets $U_0$; hence $F_U$ contains $F_{U_0}$.  Thus each
$A_{F_U}$ has index dividing $q^{d+1}$.  Meanwhile, since the ramification divisor of $A$ has at most normal crossings, by 
Proposition~\ref{csa patches}(\ref{small_local_csa}) we also have that the index of
$A_{F_P}$ divides $q^{d+1}$ for $P \in \mc P$. Therefore $\ind(A)$ divides $q^{d+1}$ by
Theorem~\ref{index hasse}, and the result is proven in this case.

We now consider the general case $\per(A) = q^r$ by induction on $r$.  Choose an algebra $B$ in the class $q^{r-1}[A]$.  Since $B$ has
period $q$, it has index dividing $q^{d+1}$ (by the first part of the proof for
the case $\ell=1$). Consequently, 
$B$ has a splitting field $L$ whose degree over $F$ divides $q^{d+1}$. Since $L$ is a finitely generated field extension of $K$ of
transcendence degree $1$, and $A \otimes_F L$ has period dividing $q^{r-1}$
(by definition of $L$), it
follows by induction that $A \otimes_F L$ has index dividing
$(q^{r-1})^{d+1}$. Hence
$A \otimes_F L$ has a splitting field $L'$ whose degree over $L$ divides $(q^{r-1})^{d+1}$.
Therefore $L'/F$ is a splitting field of $A$ of degree dividing
$(q^r)^{d+1}$, and the proof is complete.
\end{proof}

As in the quadratic form case, the main theorem generalizes to a result about henselian discrete valuation rings.  

\begin{cor} \label{hensel_csa}
Let $T$ be an excellent henselian discrete valuation ring having fraction field $K$ and residue field $k$.  Let $d \ge 0$.
Suppose that $k$ has Brauer dimension $d$ away from $\cha(k)$. 
Then $K$ has Brauer dimension at most $d+1$ away from $\cha(k)$.
\end{cor}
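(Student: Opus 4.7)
The plan is to reduce Corollary~\ref{hensel_csa} to the complete case handled by Theorem~\ref{main_csa}, exactly parallel to how Corollary~\ref{hensel} reduces the quadratic form statement to the complete case via the Artin Approximation Lemma~\ref{approx}. Let $\wh K$ denote the fraction field of the completion of $T$; since $T$ is excellent henselian, $\wh K$ is a complete discretely valued field with the same residue field~$k$, so Theorem~\ref{main_csa} yields that $\wh K$ itself has Brauer dimension at most $d+1$ away from $p:=\cha(k)$.

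Fix a finitely generated extension $E/K$ with $\ell:=\td_K E \le 1$ and a central simple $E$-algebra $A$ with $n:=\per(A)$ not divisible by~$p$; we must show $\ind(A)\mid n^{d+\ell}$. Set $i=n^{d+\ell}$. After replacing $A$ by $A\otimes_E \Mat_N(E)$ for sufficiently large~$N$ (which alters neither the Brauer class nor the index), we may assume $\deg(A)>i$, so that $X:=\SB_i(A)$ is a smooth projective $E$-variety. As recalled in the discussion preceding Theorem~\ref{index hasse}, for any field extension $L/E$ the condition $X(L)\ne\varnothing$ is equivalent to $\ind(A_L)\mid i$. In particular, proving $X(E)\ne\varnothing$ finishes the argument.

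To apply Lemma~\ref{approx} to $X$, I need to verify $X(\wh E)\ne\varnothing$ for every finitely generated extension $\wh E$ of $\wh K$ that contains $E$ and satisfies $\td_{\wh K}\wh E=\ell$. For such an $\wh E$, the algebra $A_{\wh E}=A\otimes_E \wh E$ has period dividing $n$ (hence prime to $p$), and $\wh E$ is a finitely generated extension of $\wh K$ of transcendence degree $\ell\le 1$. The bound on the Brauer dimension of $\wh K$ then gives
\[
\ind(A_{\wh E})\ \bigm|\ \per(A_{\wh E})^{d+\ell}\ \bigm|\ n^{d+\ell}=i,
\]
so $X(\wh E)=\SB_i(A_{\wh E})(\wh E)$ is non-empty. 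Lemma~\ref{approx} then produces an $E$-point of $X$, yielding $\ind(A)\mid n^{d+\ell}$ as desired.

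There is no genuine obstacle beyond bookkeeping: the essential input, Artin approximation in the form of Lemma~\ref{approx}, has already been packaged so that it applies uniformly to projective varieties, and the generalized Severi-Brauer varieties are by design projective and encode the index through the existence of rational points. The mild care needed is (i) to enlarge $A$ by a matrix factor so that $\SB_i(A)$ is actually defined, and (ii) to note that the case distinction in the definition of Brauer dimension does not cause trouble because $d+1\ge 1$, so we are always in the ``positive integer'' clause of the definition when quoting Theorem~\ref{main_csa}.
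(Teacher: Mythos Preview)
Your proof is correct and follows essentially the same route as the paper's: reduce to the complete case via Theorem~\ref{main_csa}, encode the index condition as the existence of a rational point on the projective variety $\SB_i(A)$, and invoke Lemma~\ref{approx}. You are in fact slightly more careful than the paper on two points: you explicitly enlarge $A$ by a matrix algebra so that $\SB_i(A)$ is defined, and your exponent $i=n^{d+\ell}$ is the right one (the paper's proof writes $\per(A)^{d+\ell-1}$, which is a typo).
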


\begin{proof}
We wish to show that if $E$ is a finitely generated field extension of $K$ of transcendence degree $\ell \le 1$, and if the period of a central simple $E$-algebra $A$ is not divisible by $\cha(k)$, then $\ind(A)\,|\,\per(A)^{d+\ell-1} =:i$.  Equivalently, we wish to show that there is an $E$-point on the generalized Severi-Brauer variety $\SB_i(A)$.  

The completion $\wh T$ of $T$ is a complete discrete valuation ring with residue field $k$.  Hence by Theorem~\ref{main_csa}, the Brauer dimension of its fraction field $\wh K$ is at most $d+1$ away from $\cha(k)$.  So for every finitely generated field extension $L$ of $\wh K$ of transcendence degree $\ell$ over which $A$ is defined (e.g.\ containing $E$), the index of $A_L$ divides $\per(A_L)^{d+\ell-1}$ and hence divides $i=\per(A)^{d+\ell-1}$.  Thus $\SB_i(A)$ has a rational point over every such field $L$.  So by Lemma~\ref{approx}, 
$\SB_i(A)$ has a rational point over $E$.
\end{proof}

Recall the definition of an $m$-local field given in Section~\ref{quadratic}.

\begin{cor}\label{mlocal_csa}
Let $K$ be an $m$-local field with residue field $k$, for some $m \ge 1$.  Let $d \ge 0$, and suppose that $k$ has Brauer dimension $d$ away from $\cha(k)$. Then $K$ has Brauer dimension at most $d+m$ away from $\cha(k)$.
\end{cor}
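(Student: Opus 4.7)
The plan is to prove Corollary~\ref{mlocal_csa} by induction on $m$, applying Corollary~\ref{hensel_csa} at each step of the tower $k_0 = k, k_1, \ldots, k_m = K$ that witnesses $K$ as an $m$-local field over $k$. The base case $m = 1$ is exactly Corollary~\ref{hensel_csa}. For the inductive step $m \geq 2$, the field $K = k_m$ is the fraction field of an excellent henselian discrete valuation ring $T_m$ with residue field $k_{m-1}$, and $k_{m-1}$ is itself an $(m-1)$-local field over $k$. The inductive hypothesis therefore supplies that $k_{m-1}$ has Brauer dimension at most $d + m - 1$ away from $\cha(k)$.

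When $\cha(k_{m-1}) = \cha(k)$, the hypothesis of Corollary~\ref{hensel_csa} applied to $T_m$ matches the inductive hypothesis verbatim, and the corollary immediately yields that $k_m$ has Brauer dimension at most $d + m$ away from $\cha(k_{m-1}) = \cha(k)$. This already handles the case $\cha(k) = 0$ (where all $k_i$ have characteristic zero) as well as the equicharacteristic portion of the tower when $\cha(k) = p > 0$.

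The main obstacle is the mixed-characteristic case $\cha(k) = p > 0$ with $\cha(k_{m-1}) = 0$ (which forces $\cha(k_m) = 0$ as well). Here Corollary~\ref{hensel_csa} as stated demands an \emph{unrestricted} Brauer dimension bound on $k_{m-1}$, whereas the inductive hypothesis only furnishes the bound for classes of period prime to $p$. I would resolve this by revisiting the proofs of Corollary~\ref{hensel_csa} and Theorem~\ref{main_csa}: each decomposes a central simple algebra $A$ into its $q$-primary components and only invokes the Brauer dimension hypothesis on the residue field for classes whose period divides $\per(A)$. Since the conclusion we seek on $k_m$ concerns only classes $A$ of period prime to $p$, every relevant invocation of the bound on $k_{m-1}$ is for a class of period prime to $p$, which is precisely what the inductive hypothesis provides. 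This prime-by-prime refinement closes the induction.
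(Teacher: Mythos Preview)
Your proposal is correct and follows the same inductive approach as the paper, whose entire proof reads: ``This follows from Corollary~\ref{hensel_csa} and induction.''  You go further than the paper by explicitly isolating the mixed-characteristic step (where $\cha(k)=p>0$ but $\cha(k_{m-1})=0$) and noting that Corollary~\ref{hensel_csa} as stated would then ask for an unrestricted Brauer-dimension bound on $k_{m-1}$, whereas the induction only supplies the bound away from $p$.  Your resolution---observing that the proofs of Theorem~\ref{main_csa} and Corollary~\ref{hensel_csa} (and the underlying Proposition~\ref{csa patches}) invoke the Brauer-dimension hypothesis on the residue field only for classes whose period divides $\per(A)$, while all other uses of the condition $\cha(k)\nmid\per(A)$ (ramification maps, Lemma~\ref{unram}, Hensel lifting) are vacuous in residue characteristic zero---is exactly right and makes precise what the paper's one-line proof leaves implicit.
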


\begin{proof} This follows from Corollary~\ref{hensel_csa} and induction.
\end{proof}

In particular, if $k$ is separably closed away from $\cha(k)$, and $F$ is a one-variable function field over an $m$-local field with residue field $k$, then $\ind(\alpha)\,|\,\per(\alpha)^m$ for any 
$\alpha \in \Br(F)$ of period not divisible by $\cha(k)$.  The above result also has the following consequence:

\begin{cor} \label{finsc_csa}
Let $K$ be an $m$-local field with residue field $k$ and let 
$F$ be a one-variable function field over $K$, where $k$ is either
\renewcommand{\theenumi}{\alph{enumi}}
\begin{enumerate}
\item \label{finitecsa}
a finite field; or
\item \label{curvecsa}
the function field of a curve over a separably closed field $k_0$.
\end{enumerate}
Then $\ind(\alpha)\,|\,\per(\alpha)^m$ (resp.\ $\ind(\alpha)\,|\,\per(\alpha)^{m+1}$) for every element in the Brauer group of $K$ (resp.\ of $F$) of period not divisible by $\cha(k)$.
\end{cor}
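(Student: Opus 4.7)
The plan is to deduce this directly from Corollary~\ref{mlocal_csa} by verifying, in each case, that the residue field $k$ has Brauer dimension at most $1$ away from $\cha(k)$. Once this is established, applying Corollary~\ref{mlocal_csa} with $d=1$ shows that $K$ has Brauer dimension at most $m+1$ away from $\cha(k)$. Unpacking the definition, the first bulleted condition (with $E=K$) gives $\ind(\alpha)\mid\per(\alpha)^{(m+1)-1}=\per(\alpha)^m$ for every $\alpha\in\Br(K)$ with $\cha(k)\nmid\per(\alpha)$, and the second (with $E=F$, a transcendence degree one extension of $K$) gives $\ind(\alpha)\mid\per(\alpha)^{m+1}$ for every $\alpha\in\Br(F)$ with $\cha(k)\nmid\per(\alpha)$, yielding both parts of the corollary at once.

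For case~(\ref{finitecsa}), every finite extension $E/k$ is again a finite field, so Wedderburn's theorem gives $\Br(E)=0$ and hence $\ind=\per=1$ trivially. For a finitely generated extension $E/k$ of transcendence degree one, $E$ is a global function field and classical class field theory yields $\ind(\alpha)=\per(\alpha)$ for every $\alpha\in\Br(E)$. Thus $k$ has Brauer dimension at most $1$.

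For case~(\ref{curvecsa}), a finite extension $E/k$ is itself the function field of a curve over $k_0$, since $k_0$ is separably closed and admits no nontrivial finite separable extensions; such an $E$ is $C_1$ by Tsen's theorem, so its Brauer group is trivial. A finitely generated extension $E/k$ of transcendence degree one is the function field of a surface over $k_0$, and de~Jong's theorem (\cite{deJ}) gives $\ind(\alpha)=\per(\alpha)$ for every $\alpha\in\Br(E)$ of period coprime to $\cha(k_0)=\cha(k)$. Thus $k$ has Brauer dimension at most $1$ away from $\cha(k)$. Since the argument is a direct application of Corollary~\ref{mlocal_csa}, there is no substantive obstacle; the one point requiring attention is carrying the ``away from $\cha(k)$'' qualification through case~(\ref{curvecsa}), which matches the hypothesis of Corollary~\ref{mlocal_csa} exactly.
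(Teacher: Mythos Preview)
Your approach is essentially the paper's: show $k$ has Brauer dimension at most $1$ away from $\cha(k)$ and then invoke Corollary~\ref{mlocal_csa} with $d=1$. One point needs correction in case~(\ref{curvecsa}): Tsen's theorem requires an \emph{algebraically} closed base field, and a separably closed field of positive characteristic need not be $C_0$ (for instance $x^p - t y^p$ has no nontrivial zero over the separable closure of $\F_p(t)$), so you cannot conclude that $E$ is $C_1$ or that $\Br(E)=0$. The paper instead uses the cohomological-dimension argument recorded just before Proposition~\ref{csa patches}: $k_0$ separably closed gives $\cd_q(k_0)=0$ for every prime $q \ne p$, hence $\cd_q(E)\le 1$ for any finitely generated $E/k_0$ of transcendence degree $\le 1$, and therefore $\Br(E)$ has no nontrivial prime-to-$p$ torsion. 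That is exactly what ``Brauer dimension $\le 1$ away from $p$'' requires for the finite-extension clause, so your argument goes through once you substitute this justification.
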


\begin{proof}
(\ref{finitecsa}) By Wedderburn's Theorem, $\Br(k')$ is trivial for every finite extension $k'$ of $k$.  
Moreover, period equals index in the Brauer group of any one-dimensional function field over $k$
(see \cite{Rei}, Theorem~32.19).  So the Brauer dimension of $k$ is $1$, and the conclusion follows from Corollary~\ref{mlocal_csa}.

(\ref{curvecsa}) Let $p = \cha(k_0)=\cha(k)\geq 0$.  As noted before Proposition~\ref{csa patches}, since $k_0$ is separably closed there is no non-trivial prime-to-$p$ torsion in $\Br(k)$.  Moreover, if $E$ is a one-variable function field over $k$, then $E$ is the function field of a surface over $k_0$; and hence period equals index for elements of prime-to-$p$ period in $\Br(E)$, by the main theorem of \cite{deJ}.  Thus the Brauer dimension of $k$ is $1$, and the assertion again follows from Corollary~\ref{mlocal_csa}.
\end{proof}

As an example of Corollary~\ref{finsc_csa}(\ref{curvecsa}), $\ind(\alpha)\,|\,\per(\alpha)^2$ for any element $\alpha$ in the Brauer group of $\C(x)((t))(y)$.  Also, as a special case of part~(\ref{finitecsa}) of the above result, we have the following analog of Corollary~\ref{cor_pasu} that was first proven by Saltman ~\cite{Sal:DA}:

\begin{cor} 
Let $p$ be a prime, and let $K$ be a finite extension of $\Q_p$ or of the field of algebraic $p$-adic numbers (i.e.\ the algebraic closure of $\Q$ in $\Q_p$).  If $F$ is a function field in one variable over $K$ and the period of $\alpha \in \Br(F)$ is not divisible by $p$, then $\ind(\alpha)$ divides $\per(\alpha)^2$.
\end{cor}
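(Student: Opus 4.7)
The plan is to recognize this statement as the special case $m=1$ of Corollary~\ref{finsc_csa}(\ref{finitecsa}), once we verify that both sorts of fields $K$ in the hypothesis are $1$-local with finite residue field. The Brauer dimension input for the residue field is the only nontrivial arithmetic content, and it is already packaged in Corollary~\ref{finsc_csa}(\ref{finitecsa}) via Wedderburn's theorem together with the fact that period equals index for Brauer classes over a one-variable function field over a finite field (\cite{Rei}, Theorem~32.19).

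First I would treat the case when $K/\Q_p$ is finite. Such a $K$ is the fraction field of a complete discrete valuation ring whose residue field $k$ is a finite extension of $\F_p$, and hence is $1$-local with finite residue field. Since $\cha(k)=p$ and by assumption $p \nmid \per(\alpha)$, Corollary~\ref{finsc_csa}(\ref{finitecsa}) with $m=1$ applies, giving $\ind(\alpha) \mid \per(\alpha)^2$.

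Next I would handle the algebraic case. Let $K_0$ denote the algebraic closure of $\Q$ in $\Q_p$, and let $K/K_0$ be finite. The $p$-adic valuation on $\Q_p$ restricts to a discrete valuation on $K$ (the value group is a subgroup of $\Z$, hence $\Z$), and its valuation ring $T$ is henselian because $K$ is algebraic over a subfield whose completion is the complete (hence henselian) field $\Q_p$. The completion $\wh K$ is the finite extension of $\Q_p$ generated by $K$, which has characteristic zero; so $T$ is excellent. The residue field $k$ of $T$ coincides with that of $\wh K$, which is a finite extension of $\F_p$. Thus $K$ is again a $1$-local field with finite residue field (in the sense used before Corollary~\ref{cor_mlocal}), and Corollary~\ref{finsc_csa}(\ref{finitecsa}) with $m=1$ applies verbatim, yielding the same conclusion.

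The only place where I would need to be a little careful is the verification, in the algebraic case, that the valuation ring $T$ really is an excellent henselian discrete valuation ring as required by the chain of corollaries leading to Corollary~\ref{finsc_csa}; but since $\cha(K)=0$ makes the completion $T \hookrightarrow \wh T$ automatically a regular (even separable) morphism, excellence is immediate, and henselianity was noted above. Everything else is a direct quotation of Corollary~\ref{finsc_csa}(\ref{finitecsa}).
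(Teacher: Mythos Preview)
Your proposal is correct and follows exactly the approach the paper takes: the paper introduces this corollary with the phrase ``As a special case of part~(\ref{finitecsa}) of the above result'' and gives no further proof. You have simply supplied the verification---omitted in the paper---that a finite extension of the algebraic $p$-adic numbers is indeed a $1$-local field with finite residue field (excellent henselian DVR of characteristic zero), which is the only thing needed to invoke Corollary~\ref{finsc_csa}(\ref{finitecsa}) with $m=1$.
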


As another example of Corollary~\ref{finsc_csa}(\ref{finitecsa}) (taking $m=2$), the field $K = \Q_p((t))$ satisfies the relation $\ind(\alpha)\,|\,\per(\alpha)^2$ for $\alpha \in \Br(K)$ of period prime to $p$, and the field $F = \Q_p((t))(x)$ satisfies the relation $\ind(\alpha)\,|\,\per(\alpha)^3$ for $\alpha \in \Br(F)$ of period prime to $p$.

In parallel with the quadratic form situation, Theorem~\ref{main_csa} has an analog for the function fields of patches.
Namely, using Lemma~\ref{reduce_to_patches} and Theorem~\ref{main_csa} we prove

\begin{cor}\label{main_csa_patches}
Let $T$ be a complete discrete valuation ring with residue field $k$ of characteristic  $p \geq 0$. Let $\wh X$
be a smooth projective $T$-curve with closed fiber
$X$, and let $\xi$ be either a subset of 
$X$ or a closed point of $X$. Suppose that $k$ has
Brauer dimension $d$. Then for all $\alpha$ in
$\Br(F_\xi)$ with period not divisible by $p$, we have $\ind(\alpha)\, |\, \per(\alpha)^{d+2}$.  Moreover if $T$ contains a primitive $\per(\alpha)$-th root of unity, then $\ind(\alpha)\, | \,\per(\alpha)^{d+1}$.
\end{cor}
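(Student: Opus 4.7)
The plan is to carry out a ``de-patching'' argument analogous to the proof of Corollary~\ref{main_patches} for quadratic forms, representing $\alpha \in \Br(F_\xi)$ (up to a controlled loss) by a class in $\Br(F)$ and then invoking Theorem~\ref{main_csa}. That theorem asserts that $K := \Frac(T)$ has Brauer dimension at most $d+1$ away from $p$, so any $\gamma \in \Br(F)$ of period prime to $p$ satisfies $\ind(\gamma) \mid \per(\gamma)^{d+1}$, since $F$ is a one-variable function field over $K$. The bridge between $F_\xi^\times$ and $F^\times$ is furnished by Lemma~\ref{reduce_to_patches}.

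I would first handle the case $\zeta_n \in T$ for $n := \per(\alpha)$, giving the sharper bound $\ind(\alpha) \mid \per(\alpha)^{d+1}$. Since $\zeta_n \in F_\xi$ and $p \nmid n$, the Merkurjev--Suslin theorem writes $\alpha = \sum_i (a_i,b_i)_n$ with $a_i,b_i \in F_\xi^\times$. Applying Lemma~\ref{reduce_to_patches} to each entry (it applies because $p \nmid n$), write $a_i = a_i' u_i^n$ and $b_i = b_i' v_i^n$ with $a_i',b_i' \in F^\times$ and $u_i,v_i \in F_\xi^\times$. Since the class of a symbol algebra depends only on its entries modulo $n$th powers, $(a_i,b_i)_n = (a_i',b_i')_n$ in $\Br(F_\xi)$. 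Hence $\alpha$ is the image in $\Br(F_\xi)$ of $\beta := \sum_i (a_i',b_i')_n \in \Br(F)$, whose period divides $n$. Theorem~\ref{main_csa} yields $\ind(\beta) \mid n^{d+1}$, and therefore $\ind(\alpha) \mid \ind(\beta) \mid \per(\alpha)^{d+1}$.

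For the general case, I would first reduce to prime-power period: with $\per(\alpha) = \prod_i q_i^{r_i}$ (distinct primes $q_i \ne p$), decompose $\alpha$ into its $q_i$-primary components $\alpha_i$ of period $q_i^{r_i}$, and use $\ind(\alpha) \mid \prod_i \ind(\alpha_i)$ together with $\per(\alpha) = \prod_i \per(\alpha_i)$; this reduces the problem to $\per(\alpha) = q^r$ with $q \ne p$. Next set $T' := T[\zeta_{q^r}]$, a complete DVR finite \'etale over $T$, with residue field $k' := k(\zeta_{q^r})$, a finite extension of $k$ and therefore of Brauer dimension at most $d$ away from $p$. Let $\wh X' := \wh X \times_T T'$ and let $\xi'$ be an irreducible component of the pullback of $\xi$ to $\wh X'$; the associated patch field $F'_{\xi'}$ is a finite extension of $F_\xi$ containing $\zeta_{q^r}$, of degree dividing $[K':K] \mid \phi(q^r) = q^{r-1}(q-1)$. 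Applying the roots-of-unity case just proved, now to $\alpha_{F'_{\xi'}}$ over $(T',\wh X',\xi')$, gives $\ind(\alpha_{F'_{\xi'}}) \mid q^{r(d+1)}$.

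Finally, I would descend back to $F_\xi$ using the standard prime-to-$q$ argument. The extension $F'_{\xi'}/F_\xi$ is Galois with abelian Galois group (a subgroup of $(\Z/q^r\Z)^\times$), so writing $[F'_{\xi'}:F_\xi] = q^a m$ with $a \le r-1$ and $\gcd(m,q) = 1$, the fixed field $E \subseteq F'_{\xi'}$ of the $q$-Sylow satisfies $[E:F_\xi] = m$. Because $\ind(\alpha)$ is a $q$-power and $m$ is coprime to $q$, we have $\ind(\alpha) = \ind(\alpha_E)$ by \cite{Pie}, Proposition~13.4(vi); combining this with $\ind(\alpha_E) \mid [F'_{\xi'}:E] \cdot \ind(\alpha_{F'_{\xi'}}) = q^a \cdot \ind(\alpha_{F'_{\xi'}})$ yields $\ind(\alpha) \mid q^{r-1} \cdot q^{r(d+1)} = q^{rd+2r-1}$, which divides $q^{r(d+2)} = \per(\alpha)^{d+2}$. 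The principal obstacle will be verifying the base-change step, specifically that a chosen irreducible component $\xi'$ of the pullback yields a bona fide patch field $F'_{\xi'}$ to which the roots-of-unity case applies; this rests on the compatibility of $t$-adic completion with the finite \'etale base change $T \to T'$, which holds since $q^r$ is invertible in $T$.
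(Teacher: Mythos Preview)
Your proof is correct and follows essentially the same approach as the paper's: reduce to prime-power period, adjoin a primitive $q^r$-th root of unity via the base change $T' = T[\zeta_{q^r}]$, use Merkurjev--Suslin to express the class as a sum of symbols, apply Lemma~\ref{reduce_to_patches} to replace the symbol entries by elements of the function field $F'$, invoke Theorem~\ref{main_csa} over $F'$, and then descend using the prime-to-$q$/$q$-power factorization of $[F_{\xi'}:F_\xi]$. The only differences are organizational: you isolate the roots-of-unity case first and then reduce the general case to it, whereas the paper carries out the base change and the symbol argument in one pass; and your descent uses the fixed field of the $q$-Sylow subgroup where the paper uses the intermediate field $F_\xi(\zeta_q)$, but these play the same role and yield the identical bound $\ind(\alpha)\mid q^{r-1}\cdot q^{r(d+1)}$.
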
 

\begin{proof} 
As in the proof of Theorem~\ref{main_csa} in the case $\ell=1$, by considering the prime factorization of $\per(\alpha)$ we reduce to the case that $\per(\alpha)$ is a prime power, say $q^r$.  

Let $T' = T[\zeta_{q^r}]$, where $\zeta_e$ denotes a primitive $e$-th root of unity.  Let $\wh X' = \wh X \times_T T'$, with function field $F' = FT'$, and let $\xi' = \xi \times_T T'$ in $\wh X'$.  Then $F_{\xi'} = F_\xi T' = F_\xi(\zeta_{q^r})$, where $F_{\xi'}$ is as in Notation~\ref{notation} with respect to the curve $\wh X'$.  Consider the intermediate field $F_\xi(\zeta_q)$.
The degree $[F_\xi(\zeta_q):F_\xi]$ divides $q-1$ and $s := [F_{\xi'}:F_\xi(\zeta_q)]$ divides $q^{r-1}$.  Let $\alpha' \in \Br(F_{\xi'})$ and $\alpha'' \in \Br(F_\xi(\zeta_q))$ be the elements induced by $\alpha \in \Br(F_\xi)$.
Since $[F_\xi(\zeta_q):F_\xi]$ is prime to the period of $\alpha$, the period and index of $\alpha''$ are equal to those of $\alpha$ 
(\cite{Pie},  Propositions~13.4(vi) and~14.4b(v)).
By \cite{Pie}, Proposition~13.4(v), $s \ind(\alpha')$ is divisible by $\ind(\alpha'') = \ind(\alpha)$.    

Since $F_{\xi'}$ contains $\zeta_{q^r}$, by \cite{MerSus} the element $\alpha' \in \Br(F_{\xi'})$ is represented by a tensor product  
$(a_1, b_1)_{q^r} \otimes \cdots \otimes~(a_m, b_m)_{q^r}$
of symbol algebras, 
where each $a_i,b_i \in F_{\xi'}$. Applying Lemma~\ref{reduce_to_patches} to the smooth projective $T'$-curve $\wh X'$, we may write $a_i = a_i' u_i^{q^r}$ and $b_i = b_i' v_i^{q^r}$ for $a_i', b_i' \in F'$ and $u_i, v_i
\in F_{\xi'}$.  Thus 
$(a_i, b_i)_{q^r}$ is Brauer equivalent to $(a_i', b_i')_{q^r}$. 
So if we consider the central simple $F'$-algebra $A = (a_1', b_1')_{q^r} \otimes \cdots \otimes
(a_m', b_m')_{q^r}$, then the class of $A \otimes_{F'} F_{\xi'}$ is $\alpha'$.  By Theorem~\ref{main_csa},  $\ind(A)\, |\, \per(A)^{d+1}$.  But $\ind(\alpha)$ divides $s \ind(\alpha')$ and hence $s\ind(A)$; and $\per(A)$ divides $q^r = \per(\alpha)$, since $\per(a_i,b_i)_{q^r}\,|\,q^r$.  So $\ind(\alpha)\,|\,s\per(\alpha)^{d+1}$.

Since $s$ divides $q^{r-1}$, this shows that $\ind(\alpha)\,|\,
\per(\alpha)^{d+2}$.  In the case that $T$ (and hence $F_\xi$) contains a primitive $q^r$-th root of unity, $s=1$ and so $\ind(\alpha)\,|\,\per(\alpha)^{d+1}$.
\end{proof}

\begin{cor} \label{main_csa_patchessc}
Under the hypotheses of Corollary~\ref{main_csa_patches}, if $k$ is  separably closed, then $\per(\alpha)=\ind(\alpha)$ for elements 
in $\Br(F_\xi)$ of period not divisible by the characteristic of $k$.
\end{cor}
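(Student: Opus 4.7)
The plan is to reduce to the second, sharper conclusion of Corollary~\ref{main_csa_patches} via a roots-of-unity argument. By the definition of Brauer dimension recalled in the introduction, a separably closed field has Brauer dimension~$0$; so in the present setting we may take $d = 0$, and Corollary~\ref{main_csa_patches} already gives the weaker estimate $\ind(\alpha)\,|\,\per(\alpha)^2$. The goal is to upgrade this to $\ind(\alpha)\,|\,\per(\alpha)^{d+1} = \per(\alpha)$ by verifying that $T$ automatically contains a primitive $\per(\alpha)$-th root of unity under the stated hypotheses.

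Write $n = \per(\alpha)$ and $p = \cha(k)$, so that $\gcd(n,p) = 1$ by assumption. The polynomial $x^n - 1 \in k[x]$ is then separable, and since $k$ is separably closed it splits completely over $k$; in particular $k$ contains a primitive $n$-th root of unity $\bar\zeta$. Because $T$ is complete with residue field $k$ and $\bar\zeta$ is a simple root of the reduction of $x^n - 1$ (its derivative $nx^{n-1}$ being a unit at $\bar\zeta$), Hensel's lemma lifts $\bar\zeta$ to an element $\zeta \in T$ satisfying $\zeta^n = 1$; the multiplicative order of $\zeta$ equals the order of its reduction, namely $n$.

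Applying the second assertion of Corollary~\ref{main_csa_patches} with $d = 0$ now yields $\ind(\alpha)\,|\,\per(\alpha)^{0+1} = \per(\alpha)$. Combined with the universally valid divisibility $\per(\alpha)\,|\,\ind(\alpha)$ recalled in the introduction, this forces $\per(\alpha) = \ind(\alpha)$, as required.

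The genuine content here is already encoded in Corollary~\ref{main_csa_patches}; no serious obstacle remains, since the coprimality of $n$ with the residue characteristic makes the Hensel lift of the root of unity routine, and the hypothesis that $k$ is separably closed is precisely what is needed to guarantee that the relevant cyclotomic extension is trivial.
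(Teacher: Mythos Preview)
Your proof is correct and follows essentially the same approach as the paper: both use that $k$ separably closed gives Brauer dimension $d=0$ and provides the needed primitive root of unity, then invoke the sharper conclusion of Corollary~\ref{main_csa_patches}. You are simply more explicit than the paper about the Hensel lift from $k$ to $T$, which the paper leaves implicit.
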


\begin{proof}
Since the characteristic of $k$ does not divide $\per(\alpha)$, it follows that $k$ contains a primitive $\per(\alpha)$-th root of unity.  
Moreover $k$ has Brauer dimension zero.  So $\ind(\alpha)$ divides $\per(\alpha)$ by Corollary~\ref{main_csa_patches}.  But $\per(\alpha)$ divides $\ind(\alpha)$; so the result follows.
\end{proof}

In particular, if $k$ is separably closed, then period equals index for elements of period not divisible by $\cha(k)$ in the Brauer groups of the fraction fields of $k[[x,t]]$ and $k[x][[t]]$.  Similarly, let 
$\Z_p^{\ur}$ be the maximal unramified extension of $\Z_p$.  The residue field of $\Z_p^{\ur}$ is the algebraically closed field $\bar \F_p$, and so the fraction fields of 
$\Z_p^{\ur}[[x]]$ and of the $p$-adic completion of $\Z_p^{\ur}[x]$ each have the property that period equals index for elements in their Brauer group having period prime to $p$.

\begin{rem} \label{csa_patchrk}

\ \ \ (a) The proof of Corollary~\ref{main_csa_patches} actually shows more: that the index of $\alpha \in \Br(F_\xi)$ divides $[F_\xi(\zeta_n):F_\xi(\zeta_{\rho(n)})]n^{d+1}$, where $n= \per(\alpha)$ and where $\rho(n)$ denotes the product of the distinct primes that divide $n$ (each taken with multiplicity one).  In particular, the index of $\alpha$ in $\Br(F_\xi)$ divides $\per(\alpha)^{d+2}/\rho(\per(\alpha))$.

(b) We suspect that actually $\ind(\alpha) | \per(\alpha)^{d+1}$ in Corollary~\ref{main_csa_patches}, even without the assumption on roots of unity.  Perhaps this could be shown by paralleling the proof of Theorem~\ref{main_csa} with $F$ replaced by $F_\xi$.  But doing this would require generalizations of previous results here and in \cite{HH:FP}.
\end{rem}

Remark~\ref{csa_patchrk}(a) shows that 
Corollary~\ref{main_csa_patchessc} can be strengthened to include the 
case that $k$ is separably closed away from $p = \cha(k)$.  To see this, 
first note for any integer $n$, the degree
$[F_\xi(\zeta_n):F_\xi(\zeta_{\rho(n)})]$ is divisible only by primes 
that divide $n$.  Now let $\alpha$ be an element of $\Br(F_\xi)$ whose 
period $n$ is not divisible by $p$.  Then the above degree is prime to 
$p$.  But $k$ is separably closed away from $p$.  So in fact this degree 
is equal to $1$.  Since the Brauer dimension $d$ of $k$ is zero, 
Remark~\ref{csa_patchrk}(a) then shows that $\ind(\alpha)$ divides (and 
hence is equal to) $n = \per(\alpha)$.

\bibliographystyle{alpha}
\bibliography{citations}

\begin{thebibliography}{KMRT98}

\bibitem[Abh69]{Abh}
Shreeram S.~Abhankar.
\newblock Resolution of singularities of algebraic surfaces.
\newblock In {\em 1969  Algebraic Geometry (Internat.\ Colloq., Tata Inst.\ Fund.\ Res., Bombay, 1968)}, pp. 1--11, Oxford Univ.\ Press, London.

\bibitem[Art69]{Artin}
Michael Artin.
\newblock Algebraic approximation of structures over complete local rings.
\newblock {\em Publ.\ Math.\ IHES}, No.~36, 1969, pp.~23--58.

\bibitem[Bor91]{Borel}
Armand Borel.
\newblock {\em Linear algebraic groups}, second edition.
\newblock Graduate Texts in Mathematics, volume~126.
\newblock Springer-Verlag, New York, Berlin and Heidelberg, 1991.

\bibitem[BLR90]{BLR:Neron}
Siegfried Bosch, Werner L\"utkebohmert and Michel Raynaud.
\newblock {\em N\'eron models}, volume 21 of {\em Ergebnisse der Mathematik}.
\newblock Springer-Verlag, Berlin and Heidelberg, 1990.

\bibitem[COP02]{COP}
Jean-Louis Colliot-Th\'el\`ene, Manuel Ojanguren and R.~Parimala.
\newblock Quadratic forms over fraction fields of two-dimensional Henselian rings and Brauer groups of related schemes.  
\newblock In: {\em Proceedings of the International Colloquium on Algebra, Arithmetic and Geometry}, Tata Inst.\ Fund.\ Res.\ Stud.\ Math., vol.~16, pp.~185--217, Narosa Publ.\ Co., 2002.

\bibitem[deJ04]{deJ}
A.~Johan de Jong.
\newblock The period-index problem for the Brauer group of an algebraic surface. 
\newblock {\em Duke Math.\ J.} {\bf 123} (2004), 71--94.

\bibitem[Eis95]{Eis:CA}
David Eisenbud.
\newblock {\em Commutative algebra}, volume 150 of {\em Graduate Texts in
  Mathematics}.
\newblock Springer-Verlag, New York, 1995.

\bibitem[For96]{For}
Timothy J.~Ford.
\newblock The Brauer group of a curve over a strictly local discrete valuation ring.
\newblock {\em Israel J.\ Math.} {\bf 96} (1996), 259--266.

\bibitem[Gre66]{Gre}
Marvin J.~Greenberg.
\newblock Rational points in henselian discrete valuation rings.
\newblock {\em Publ.\ Math.\ IHES}, No.~31, 1966, 59--64.

\bibitem[Gro61]{EGAIII}
Alexander Grothendieck.
\newblock \'El\'ements de g\'eom\'etrie alg\'ebrique, III.
\newblock {\em Publ.\ Math.\ IHES}, No.~11, 1961.

\bibitem[Gro65]{EGAIV2}
Alexander Grothendieck.
\newblock \'El\'ements de g\'eom\'etrie alg\'ebrique, IV, $2^e$ partie.
\newblock {\em Publ.\ Math.\ IHES}, No.~24, 1965.

\bibitem[Gro67]{EGAIV4}
Alexander Grothendieck.
\newblock \'El\'ements de g\'eom\'etrie alg\'ebrique, IV, $4^e$ partie.
\newblock {\em Publ.\ Math.\ IHES}, No.~32, 1967.

\bibitem[Gro68]{Grothbrauer}
Alexander Grothendieck.
\newblock Le groupe de {B}rauer. {Parts~I-III}. 
\newblock In {\em Dix Expos\'es sur la Cohomologie des Sch\'emas}, pages
  46--188. North-Holland, Amsterdam, 1968.
  
\bibitem[Gro71]{SGA}
Alexander Grothendieck.
\newblock S\'eminaire de g\'eom\'etrie alg\'ebrique, 1.
\newblock {\em Lecture Notes in Math.}, vol.~224, Springer-Verlag, Berlin, Heidelberg and New York, 1971.

\bibitem[Grv02]{Grove}
Larry Grove.
\newblock {\em Classical groups and geometric algebra}.
\newblock Graduate Studies in Mathematics, vol.~39.
\newblock American Mathematical Society, Providence, RI, 2002.

\bibitem[Har03]{Har:MSRI}
David Harbater.
\newblock Patching and Galois theory.
\newblock In: {\em Galois Groups and Fundamental Groups}
    (L. Schneps, ed.), pp.313--424, MSRI Publications series, vol.~41,
    Cambridge University Press, 2003.

\bibitem[HH07]{HH:FP}
David Harbater and Julia Hartmann.
\newblock Patching over fields, 2007.
\newblock To appear in {\em Israel Journal of Mathematics}.
\newblock Also available at arXiv:0710.1392.

\bibitem[Kn62]{Kneser}
Martin Kneser.
\newblock Schwache Approximation in algebraischen Gruppen.
\newblock In: {\em Centre Belge rech. Math. Colloque Th\'eorie des groupes alg\'ebriques}, Bruxelles 1962-41-52, 1962.

\bibitem[KMRT98]{BofInv}
Max-Albert Knus, Alexander S.~Merkurjev, Markus Rost, and Jean-Pierre Tignol.
\newblock {\em The book of involutions}.
\newblock American Mathematical Society, Providence, RI, 1998.

\bibitem[Lam05]{Lam}
Tsit-Yuen Lam.
\newblock {\em Introduction to quadratic forms over fields.}
\newblock American Mathematical Society, Providence, RI, 2005.

\bibitem[Lie08]{Lie:PI}
Max Lieblich.
\newblock Period and index in the Brauer group of an arithmetic surface, with an appendix by Daniel Krashen.
\newblock ArXiv Preprint arXiv:math/0702240v3 (2008).

\bibitem[Lip75]{Lip}
Joseph Lipman.
\newblock Introduction to resolution of singularities.
\newblock In {\em Algebraic geometry (Proc. Sympos. Pure Math., Vol. 29,
  Humboldt State Univ., Arcata, Calif., 1974)}, pages 187--230. Amer. Math.
  Soc., Providence, R.I., 1975.

\bibitem[MS82]{MerSus}
Alexander S.~Merkurjev and Andrei A.~Suslin.
\newblock $K$-cohomology of Severi-Brauer varieties and the norm residue homomorphism (in Russian).
\newblock {\em Izv.\ Akad.\ Nauk SSSR Ser.\ Mat.} {\bf 46} (1982), 1011-1046, 1135-1136.  English translation: {\em Math.\ USSR-Izv.} {\bf 21} (1983), 307--340.

\bibitem[Mil72]{Milne}
James S.~Milne.
\newblock On the arithmetic of abelian varieties.
\newblock {\em Inventiones Math.} {\bf 17} (1972), 177--190.

\bibitem[Pie82]{Pie}
Richard~S. Pierce.
\newblock {\em Associative algebras}.
\newblock Springer-Verlag, New York, 1982.

\bibitem[Pfi95]{pfister}
Albrecht Pfister.
\newblock {\em Quadratic Forms with Applications to Algebraic Geometry and
  Topology}, LMS Lect. Note Series~{\bf 217}, Cambridge Univ. Press,
Cambridge, 1995.

\bibitem[PS07]{PaSu}
R.~Parimala and V.~Suresh.
\newblock {\em The u-invariant of the function fields of p-adic curves}.
\newblock ArXiv Preprint arXiv:0708.3128 (2007).

\bibitem[Rei75]{Rei}
Irving Reiner.
\newblock {\em Maximal Orders}.
\newblock Academic Press, London, New York and San Francisco, 1975.

\bibitem[Sal97]{Sal:DA}
David~J. Saltman.
\newblock Division algebras over $p$-adic curves.
\newblock {\em J. Ramanujan Math. Soc.} {\bf 12(1)} (1997), 25--47.

\bibitem[Sal98]{Sal:DAC}
David~J. Saltman.
\newblock Correction to: Division algebras over $p$-adic curves.
\newblock {\em J. Ramanujan Math. Soc.} {\bf 13(2)} (1998), 125--129.

\bibitem[Sal99]{Sal:LN}
David~J. Saltman.
\newblock {\em Lectures on division algebras}, volume~94 of {\em CBMS Regional
  Conference Series in Mathematics}.
\newblock American Mathematical Society, Providence, RI, 1999.

\bibitem[Sal07]{Sal:cyclic}
David~J. Saltman.
\newblock Cyclic algebras over $p$-adic curves.
\newblock {\em Journal of Algebra} {\bf 314} (2007), 817--843.

\bibitem[See99]{See:BS}
George F.~Seelinger.
\newblock Brauer-Severi schemes of finitely generated algebras.
\newblock {\em Israel J.\ of Math.} {\bf 111} (1999), 321--337.

\bibitem[Ser73]{Serre:CG}
Jean-Pierre Serre.
\newblock {\em Cohomologie Galoisienne}, fourth edition.
\newblock  {\em Lecture Notes in Math.}, vol.~5, Springer-Verlag, Berlin, Heidelberg and New York, 1973.

\bibitem[Sha72]{SSS}
Stephen S.~Shatz.
\newblock {\em Profinite groups, arithmetic, and geometry}.
\newblock {\em Annals of Math.\ Studies}, vol.~67, Princeton University Press, Princeton, 1972.

\bibitem [VdB88]{vdB}
Michel Van den Bergh.
\newblock The Brauer-Severi scheme of the trace ring of generic matrices. \newblock In: {\em Perspectives in ring theory (Antwerp, 1987)}, pp.~333--338,
NATO Adv.\ Sci.\ Inst.\ Ser.~C Math.\ Phys.\ Sci., vol.~233, Kluwer Acad.\ Publ., Dordrecht, 1988.

\end{thebibliography}
\def\cprime{$'$} \def\cprime{$'$} \def\cprime{$'$} \def\cprime{$'$}
  \def\cftil#1{\ifmmode\setbox7\hbox{$\accent"5E#1$}\else
  \setbox7\hbox{\accent"5E#1}\penalty 10000\relax\fi\raise 1\ht7
  \hbox{\lower1.15ex\hbox to 1\wd7{\hss\accent"7E\hss}}\penalty 10000
  \hskip-1\wd7\penalty 10000\box7}

\medskip

\noindent Author information:

\medskip

\noindent David Harbater: Department of Mathematics, University of Pennsylvania, Philadelphia, PA 19104-6395, USA\\ email: {\tt harbater@math.upenn.edu}

\medskip

\noindent Julia Hartmann: Lehrstuhl A f\"ur Mathematik, RWTH Aachen University, 52062 Aachen, Germany\\ email:  {\tt 
Hartmann@mathA.rwth-aachen.de}

\medskip

\noindent Daniel Krashen: Department of Mathematics, University of Georgia, Athens, GA 30602, USA\\ email: {\tt dkrashen@math.uga.edu}

\end{document}